\theoremstyle{plain}
\newtheorem{theorem}{Theorem}[section]
\newtheorem{theoremn}{Theorem}
\newtheorem{proposition}[theorem]{Proposition}
\newtheorem{lemma}[theorem]{Lemma}
\newtheorem{corollary}[theorem]{Corollary}
\newtheorem{conjecture}[theorem]{Conjecture}
\newtheorem{remark}[theorem]{Remark}
\newtheorem{example}[theorem]{Example}
\theoremstyle{definition}
\newtheorem{definition}[theorem]{Definition}
\theoremstyle{remark}
\newtheorem{claim}{Claim}
\DeclareMathOperator{\Bl}{Bs}
\DeclareMathOperator{\rk}{rk}
\DeclareMathOperator{\cod}{cod}
\newcommand{\QED}{\ifhmode\unskip\nobreak\fi\quad {\rm Q.E.D.}} 
\newcommand\iso{\cong}
\newcommand{\f}{\varphi}
\newcommand{\A}{\mathcal{A}}
\renewcommand{\H}{\mathcal{H}}
\newcommand{\I}{\mathcal{I}}
\newcommand{\K}{\mathcal{K}}
\renewcommand{\L}{\mathcal{L}}
\newcommand{\M}{\mathcal{M}}
\newcommand{\N}{\mathbb{N}}
\newcommand\Mo[1]{\overline{M}_{0,#1}}
\renewcommand{\O}{\mathcal{O}}
\renewcommand{\P}{\mathbb{P}}
\newcommand{\R}{\mathbb{R}}
\newcommand{\T}{\mathbb{T}}
\newcommand{\rat}{\dasharrow}
\newcommand{\Cal}{}
\begin{document}
\title{On some fibrations of $\Mo{n}$}

\author{Andrea Bruno and
Massimiliano Mella}
\address{ Dipartimento di Matematica \\
Universit\`a Roma Tre \\L.go S.L.Murialdo, 1 \\
 \newline
\indent 00146 Roma Italia}
\address{Dipartimento di Matematica\\
Universit\`a di Ferrara\\
Via Machiavelli 35\\
 \newline
\indent 44100 Ferrara Italia}
\email{bruno@mat.uniroma3.it}
\email{mll@unife.it}

\date{April 2011}
\subjclass{Primary 14H10 ; Secondary 14D22, 14D06}
\keywords{Moduli space of curves,
pointed  rational curves, fiber type morphism, automorphism}
\thanks{Partially supported by Progetto PRIN 2008 ``Geometria
  sulle variet\`a algebriche'' MUR}
\maketitle


\section*{Introduction}
The moduli space $M_{g,n}$ of smooth
n-pointed curves of genus $g$,
and its projective closure, the Deligne-Mumford
compactification $\overline{M}_{g,n}$,
is a classical object of study that reflects
many of the properties of
families of pointed curves. As a matter of fact, the
study of its biregular geometry is of interest
in itself and has become a central theme in various
areas of mathematics.
One of the intriguing aspects of the geometry of
spaces $\overline{M}_{g,n}$ is that they are
endowed with natural forgetful maps
$$ \phi_I: \overline{M}_{g,n} \to \overline{M}_{g,n-|I|},$$ 
indexed by subsets $I \subset \{1, \ldots, n \}$
corresponding to marked points which are forgotten.

In this paper we will focus on the case $g=0$. 
Even if one would expect the geometry of $\Mo{n}$
to be less complicated than the one of 
$\overline{M}_{g,n}$ for positive genus,
this is not always the case.
Consider for instance the so-called {\em
fibration problem}: Gibney, Keel and Morrison
proved in \cite{GKM} the following:

\begin{theoremn}
Let $g \geq 2$ and $n >0$ and let
  $f:\overline{M}_{g,n}\to X$
  be a fibration, i.e. a dominant morphism with connected fibers to a projective variety $X$. 
  Then there exists $I \subset \{1, \ldots, n \}$ and a birational morphism
  $g: \overline{M}_{g,n-|I|}\to X$ such that $f= g \circ \phi_I$ factors through
  the forgetful map $\phi_I$.
\end{theoremn}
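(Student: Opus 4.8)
The plan is to reformulate the fibration through its associated nef divisor and then run a downward induction on $n$, forgetting one marked point at a time. Fix an ample divisor $H$ on $X$ and set $D=f^{*}H$; since $f$ has connected fibers, $D$ is a nef, indeed semiample, $\mathbb{Q}$-divisor on $\overline{M}_{g,n}$ whose associated contraction is $f$ itself (after normalizing $X$). In particular an irreducible curve $C$ is contracted by $f$ exactly when $D\cdot C=0$, i.e. when $[C]$ lies in the face $D^{\perp}\cap\NEbar(\overline{M}_{g,n})$. The whole statement therefore reduces to the following dichotomy at each stage: either $f$ is birational, in which case we stop with $I=\emptyset$, or we can exhibit a marked point $i$ such that $D$ is a pullback under the one-point forgetful map $\pi_i=\phi_{\{i\}}$, so that $f$ descends to a fibration of $\overline{M}_{g,n-1}$ and we induct.

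The descent step I would handle as follows. The map $\pi_i$ is the universal curve, so its fibers are the marked genus-$g$ curves themselves; being fibers of a morphism over a connected base they are all numerically equivalent, whence $d_i:=D\cdot(\text{fiber of }\pi_i)$ is a well-defined nonnegative constant. If $d_i=0$ then $D$ is trivial on the general fiber, and because the fibers form a covering family on which the nef class $D$ has degree zero, $D$ is $\pi_i$-trivial on every fiber, including the reducible boundary ones; a seesaw argument together with the $\mathbb{Q}$-factoriality of $\overline{M}_{g,n}$ then yields $D=\pi_i^{*}D'$ for a unique $\mathbb{Q}$-divisor $D'$, automatically nef since $\pi_i$ is surjective. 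As $D$ kills every (connected) $\pi_i$-fiber, $f$ contracts them, so $f=g'\circ\pi_i$ for a fibration $g'$ of $\overline{M}_{g,n-1}$; the inductive hypothesis applied to $g'$ produces $I'$ and a birational $g$, and we conclude with $I=I'\cup\{i\}$. The induction terminates at the latest on $\overline{M}_{g,0}=\overline{M}_g$, where for $g\ge 2$ the rigidity of $\overline{M}_g$ (every fibration of $\overline{M}_g$ is birational) provides the base case.

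It remains to justify the dichotomy: a non-birational $f$ must have $d_i=0$ for some $i$. Suppose instead $d_i>0$ for every $i$, so that $f$ is finite on each fiber of each $\pi_i$. If $f$ is not birational its general fiber $Z$ is positive-dimensional and is covered by $D$-trivial curves, yet no $\pi_i$ contracts $Z$ to a point; hence the total forgetful map $\overline{M}_{g,n}\to\overline{M}_g$ is finite on $Z$, which both bounds $\dim Z$ and maps $Z$ into $\overline{M}_g$. Using the rigidity of $\overline{M}_g$ together with slope positivity to control $D$ along $Z$, I would derive that $D\cdot C>0$ for some covering curve $C\subset Z$, contradicting $D|_{Z}\equiv 0$. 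Each condition $d_i=0$ is moreover a single explicit linear relation among the coefficients of $D$ in the tautological basis $\lambda,\psi_i,\delta_{\mathrm{irr}},\delta_{i,S}$, so the accompanying combinatorics is routine.

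The main obstacle is exactly this last dichotomy. The difficulty is that the fibers of the forgetful maps are smooth curves of genus $g\ge 2$, not rational $F$-curves, so the (still open) $F$-conjecture does not apply and one cannot merely read $d_i$ off the $F$-curve cone; instead one must invoke genuine positivity for nef divisors restricted to families of genus-$g$ curves, of Cornalba--Harris and Xiao slope-inequality type, and this is precisely where the hypothesis $g\ge 2$ enters. A secondary but real technical point is propagating $\pi_i$-triviality from the general fiber to the reducible boundary fibers, which requires $D$ to be nonnegative on each vertical component and forces a careful analysis of the boundary classes $\delta_{i,S}$. Once these positivity inputs and the rigidity of $\overline{M}_g$ are in place, the nef-cone reformulation, the descent lemma, and the induction combine formally to give the stated factorization $f=g\circ\phi_I$.
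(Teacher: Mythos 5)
First, a point of comparison: this statement is quoted from \cite{GKM} and the paper under review contains no proof of it, so your attempt can only be judged on its own merits and against the published Gibney--Keel--Morrison argument. Your overall frame --- set $D=f^*H$ nef, show that either $f$ is birational or $D$ has degree zero on the fibers of some $\pi_i=\phi_{\{i\}}$, descend, and induct on $n$ --- is the natural one. But your descent step is wrong as written: seesaw cannot produce $D=\pi_i^*D'$ from fiberwise \emph{numerical} triviality, because the fibers of $\pi_i$ are genus-$g$ curves, and a degree-zero restriction to such a fiber is a point of the Jacobian, not necessarily trivial (for instance $\delta_{0,\{i,j\}}-\delta_{0,\{i,k\}}$ has fiber degree $0$ but restricts to $\mathcal{O}_C(q_j-q_k)\neq\mathcal{O}_C$ on a general fiber). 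This particular flaw is repairable by descending the \emph{morphism} instead of the divisor: nefness of $D$ together with $D\cdot F=0$ on the total fiber class forces $D$ to be zero on every component of every (possibly reducible) fiber; since $D=f^*H$ with $H$ ample, $f$ contracts each connected fiber of $\pi_i$ to a point, and the rigidity lemma (using $\pi_{i*}\mathcal{O}=\mathcal{O}$) gives $f=g'\circ\pi_i$ directly, with no need for $\mathbb{Q}$-factoriality or seesaw.

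The genuine gap is the dichotomy, which you yourself flag and then only gesture at. Two concrete problems. First, your base case is circular: ``rigidity of $\overline{M}_g$'' --- that every fibration of $\overline{M}_g$ is birational for $g\geq 2$ --- is precisely the $n=0$ instance of the theorem being proved; it is a nontrivial result established in \cite{GKM}, not a prior fact you may import into a blind proof. Second, the inference ``no $\pi_i$ contracts $Z$, hence $\overline{M}_{g,n}\to\overline{M}_g$ is finite on $Z$'' is false. Already for $n=2$, the fiber of $\overline{M}_{g,2}\to\overline{M}_g$ over a general $[C]$ is a blow-up of $C\times C$, and an irreducible curve $Z$ of bidegree $(a,b)$ with $a,b>0$ inside that fiber (such curves exist in abundance, e.g.\ in $|p_1^*A\otimes p_2^*B|$ for $A,B$ ample) is contracted by neither $\pi_1$ nor $\pi_2$, yet maps to the single point $[C]\in\overline{M}_g$. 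Excluding $D$-trivial curves of exactly this kind --- i.e.\ controlling nef classes restricted to the fibers of $\overline{M}_{g,n}\to\overline{M}_g$, which are (blow-ups of) self-products of a genus-$g$ curve --- is where the real content of the theorem lies, and your appeal to Cornalba--Harris/Xiao slope inequalities does not plug in: those inequalities constrain invariants of fibered surfaces over a base, not nef divisor classes on $C\times\cdots\times C$. As it stands, your proposal correctly sets up the easy reduction and induction scaffolding but proves neither the key dichotomy nor the base case, so it does not establish the theorem.
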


If $g>0$ the product  $\phi_I \times  \phi_J : \overline{M}_{g,n} \to \overline{M}_{g,n-|I|} \times \overline{M}_{g,n-|J|}$ 
of two forgetful maps is never surjective because its image maps to the diagonal down to $\overline{M}_{g} \times \overline{M}_{g}$.
If instead $g=0$, since rational curves do not have moduli, products of forgetful maps
provide more fibrations on $\Mo{n}$.
For instance, there is no way to factor via a simple forgetful map the fibration 
$ \phi_{1,2} \times \phi_{3,4}: \overline{M}_{0,6} \to \overline{M}_{0,4} \times \overline{M}_{0,4}$.\noindent

Nevertheless, if we define a forgetful morphism to be any product of forgetful maps,
in this paper we add some further evidence to the following:

\begin{conjecture} Every fibration from 
$\Mo{n}$ factors through a forgetful morphism. 
\end{conjecture}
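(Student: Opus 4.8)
The plan is to attack the conjecture by induction on the number $n$ of marked points, combining the restriction of a fibration to the boundary with the contraction theory of the Mori dream space $\Mo{n}$. As a starting point I would replace the fibration $f\colon\Mo{n}\to X$ by the semiample line bundle $L=f^{*}A$ attached to an ample divisor $A$ on $X$, so that $X=\Proj\bigoplus_{m\ge 0}H^{0}(\Mo{n},mL)$ and the curves contracted by $f$ are exactly those $C$ with $L\cdot C=0$. The forgetful morphisms $\phi_{I}$ and their products correspond to distinguished faces of the nef cone, namely the faces cut out by the classes pulled back from the factors $\Mo{n-|I|}$; the goal is therefore to show that the supporting face of $L$ in $\mathrm{Nef}(\Mo{n})$ is one of these forgetful faces.

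The inductive step would proceed by analyzing $f$ along the boundary, after disposing of the small cases (say $n\le 5$) by direct inspection. I would restrict $L$ to the boundary divisors, exploiting the product structure $\delta_{0,I}\cong\Mo{|I|+1}\times\Mo{n-|I|+1}$ and, in particular, the divisors $\delta_{0,\{i,j\}}\cong\Mo{n-1}$ obtained by bubbling off two marked points. On each such divisor the restriction of $f$ is again a fibration onto its image, and by the inductive hypothesis it factors through a forgetful morphism of $\Mo{n-1}$ (respectively through a product of forgetful morphisms of the two factors). The essential bookkeeping is then to record, for every pair $\{i,j\}$, which marked points are forgotten by the induced map, and to combine connectedness of the fibers of $f$ with the compatibilities forced on the common overlaps of these boundary divisors.

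The heart of the argument, and the step I expect to be the main obstacle, is the patching: to show that the forgetful data read off on the various boundary strata is globally coherent and actually comes from a single forgetful morphism defined on all of $\Mo{n}$. A priori a contraction could behave like a forgetful map on each boundary piece while imposing a subtler condition on the interior $M_{0,n}$, where the cross-ratios of the $n$ points vary continuously; ruling out such ``exotic'' fibrations is precisely what must be done. I would control the interior by arguing that $L$ is determined by its restrictions to the boundary, using that the boundary divisors generate the relevant part of $\mathrm{Pic}(\Mo{n})$ and that the F-curves contained in the boundary already detect which face of the nef cone contains $L$. Once the contracted F-curves are identified as exactly those contracted by a product $\prod_{j}\phi_{I_{j}}$, the universal property of the product of forgetful maps yields a factorization $f=g\circ\bigl(\prod_{j}\phi_{I_{j}}\bigr)$ with $g$ finite onto its image, and connectedness of the fibers of $f$ upgrades $g$ to a birational morphism, closing the induction.
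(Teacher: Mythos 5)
You should first be aware that the statement you were asked to prove is stated in the paper as a \emph{conjecture}: the authors do not prove it, and they only establish special cases (linear fibrations in Theorem 2.7, fibrations by curves and arbitrary fibrations for $n\le 7$ in Proposition 3.10 and Theorem 5.1, and targets of dimension at most $2$). So there is no paper proof to compare against, and the real question is whether your sketch closes the open problem. It does not, and the decisive gap is exactly the step you flag yourself as ``the main obstacle.'' Your patching step rests on the assertion that the F-curves (in particular those in the boundary) detect which face of the nef cone contains $L$; this is precisely Fulton's F-conjecture, which is open for $n\ge 8$ --- the paper cites Larsen's result only for $\overline{M}_{0,7}$, and even then remarks that it is unclear how to deduce the fibration statement from it. Moreover, even granting the F-conjecture, knowing that $f$ and a product $\psi=\prod_j\phi_{I_j}$ contract the same F-curves does not yield $f=g\circ\psi$: there is no ``universal property'' to invoke. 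The paper's own descent mechanism in the linear case is to prove that $\psi$ is equidimensional with \emph{rational} fibers, so that $\psi$-numerical and $\psi$-linear equivalence coincide and $L=\psi^{*}\mathcal{H}$; but Section 4 exhibits forgetful morphisms on $\overline{M}_{0,2k+2}$ whose general fibers are elliptic curves, indeed curves of genus $(k-4)2^{k-3}+1$, so rationality of fibers genuinely fails in the nonlinear regime and this descent step would need an entirely new argument precisely where the conjecture is hardest.

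Your inductive boundary analysis also has a concrete failure mode that the paper confronts explicitly: the restriction of $f$ to a boundary divisor $E_{i,j}\cong\overline{M}_{0,n-1}$ need not be a fibration, because its fibers can be disconnected and the restricted linear system $\mathcal{L}_1|_{H}$ can acquire fixed components. In the proof of Theorem 5.1 the authors write ``we cannot conclude by induction because the morphism could have non connected fibers,'' and they must separately kill the fixed part by varying the vital divisor; even after that, they can only finish for $\dim X\le 2$ by classifying the possible base loci ($\Bl\mathcal{L}_1$ a codimension-$3$ linear space or a pair of codimension-$2$ ones) and computing $\NE(Y)$ on an explicit blow-up to show the fiber-type contraction is unique. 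Nothing in your sketch replaces this: the coherence of the forgetful data read off on overlapping boundary strata, and the exclusion of ``exotic'' behaviour on the interior, are asserted rather than proved, and as stated they reduce either to the F-conjecture or to the fibration conjecture itself. In short, your architecture (boundary induction plus Mori-theoretic descent) is a reasonable research plan and parallels the partial results of Sections 3 and 5, but the two load-bearing steps --- F-curves spanning the relevant face, and descent of $L$ along $\psi$ with non-rational fibers --- are unproven, so the proposal is a program, not a proof.
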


Such statement is known to hold  for $n \leq 6$, thanks
to results of Farkas-Gibney, \cite {FG} and Keel-McKernan, \cite {KM} .

This paper continues, along the way of \cite {BM}, the approach to
this problem via the study of the projective geometry of $\Mo{n}$,
made possible by Kapranov's description of $\Mo{n}$.
Kapranov showed in \cite{Ka} that $\Mo{n}$ is identified with the
closure of the subscheme of the Hilbert scheme parametrizing
rational normal curves passing through
$n$  points in linearly general position in $\P^{n-2}$.
Via this identification, any tautological line bundle $\psi_i$
induces a birational morphism $f_i:\Mo{n} \to \P^{n-3}$
which is an iterated blow-up of $\P^{n-3}$ at the strict
transforms of all the
linear spaces spanned by subsets of $n-1$ general points in order
of increasing dimension.
In a natural way, then, base point free linear systems on
$\Mo{n}$ are identified
with linear systems on $\P^{n-3}$ whose base locus is
quite special and supported on
so-called vital spaces, i.e. spans of subsets of the given points.
Another feature of this picture is that all these
vital spaces correspond to divisors in $\Mo{n}$ which have
a modular interpretation as products of $\overline{M}_{0,r}$ for $r<n$.
In this interpretation, the modular forgetful maps
$\phi_I: \Mo{n} \to \overline{M}_{0,n-|I|}$ correspond, up to
standard Cremona transformations, to linear projections from vital spaces.

While, in \cite{BM}, the emphasis is put on the base locus of linear systems
in  $\P^{n-3}$ associated to fiber type morphisms on $\Mo{n}$, here we focus
on the geometry of fibres of fiber type morphisms.
In particular, the main Theorem of this paper concerns linear fibrations, which are dominant
morphisms $f: \Mo{n} \to X$ with connected fibres, whose general fiber is the blow-up
of a linear subspace in $\P^{n-3}$ via Kapranov's construction.

\begin{theoremn}
Let $f:\overline{M}_{0,n}\to X$ be a linear fibration.
Then there exist sets $I_1, \ldots, I_k \subset \{1, \ldots, n \}$ and a birational morphism
$g: \prod_{j=1}^k \overline{M}_{g,n-|I_j|} \to X$ 
such that $f= g \circ (\prod_{j=1}^k \phi_{I_j})$ factors through
the forgetful morphism $ \psi=\prod_{j=1}^k \phi_{I_j}$. 
\end{theoremn}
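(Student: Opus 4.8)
The plan is to transport the statement, via Kapranov's models, into a question about covering families of linear subspaces of projective space, and then to recognize such families as simultaneous projections from vital spaces. First I would fix a tautological morphism $f_i:\Mo{n}\to\P^{n-3}$ and use it to make the linear fibration $f$ explicit: by hypothesis the general fibre of $f$ is the strict transform of a linear subspace $L\subset\P^{n-3}$, so $f$ is encoded by a covering family $\{L_t\}_t$ of linear subspaces of fixed dimension $r=(n-3)-\dim X$, parametrised by a variety birational to $X$. The decisive elementary remark is that distinct fibres of $f$ are disjoint in $\Mo{n}$, whereas $f_i$ is an isomorphism away from the (blown-up) vital spaces; hence two general members $L_t,L_{t'}$ can only meet inside $\bigcup_S\langle p_j: j\in S\rangle$, where the $p_j$ ($j\neq i$) are the Kapranov points. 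This already forces every pairwise intersection of general members to be a vital space.

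Next I would extract from the family its \emph{centres}. Exploiting that $f$ is a genuine morphism on all of $\Mo{n}$, and in particular on the boundary, whose divisors are themselves products $\prod\overline{M}_{0,r}$, together with the previous remark, I would show that there are vital spaces $\Lambda_1=\langle p_\ell:\ell\in I_1\rangle,\ldots,\Lambda_k=\langle p_\ell:\ell\in I_k\rangle$, with $I_1,\ldots,I_k\subset\{1,\dots,n\}\setminus\{i\}$, such that each general member decomposes as $L_t=\bigcap_{j=1}^k\Pi_j(t)$ with $\Pi_j(t)\supseteq\Lambda_j$, and such that, as $t$ varies, $\Pi_j(t)$ sweeps out exactly the linear projection $\pi_j$ from $\Lambda_j$. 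In other words $\{L_t\}$ is the common refinement of the $k$ projections $\pi_j$. The independence of the centres, equivalently the disjointness of the index sets $I_j$, is then forced: if two centres failed to be independent, restriction to the appropriate boundary strata (F-curves) would contradict either the connectedness of the fibres of $f$ or the fact that each $\Pi_j$ sweeps a full projection; a dimension count moreover gives $\dim X=\sum_j\bigl((n-3)-|I_j|\bigr)=\sum_j\dim\overline{M}_{0,n-|I_j|}$, consistent with what we are aiming at.

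I would then identify each projection with a forgetful map and build $g$. Projection of $\P^{n-3}$ from the vital space $\langle p_\ell:\ell\in I_j\rangle$ is precisely the Kapranov incarnation of $\phi_{I_j}$, with no Cremona twist required, this being exactly the content of the linearity hypothesis; the product of the $\pi_j$ is then the Kapranov incarnation of $\psi=\prod_j\phi_{I_j}$, and by construction $f$ and $\psi$ define the same fibration. Consequently there is an induced rational map $g:\prod_j\overline{M}_{0,n-|I_j|}\map X$ which is dominant and generically injective, hence birational; since the source is normal and projective and $f=g\circ\psi$ holds on a dense open set with $\psi$ proper, Zariski's main theorem lets me extend $g$ to a morphism with $f=g\circ\psi$ everywhere, as required.

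The step I expect to be the main obstacle is the extraction and rigidity of the centres in the second paragraph: proving that an a priori arbitrary covering family of linear spaces arising from a fibration of $\Mo{n}$ must be a simultaneous projection from \emph{vital} spaces, thereby excluding exotic linear fibrations. This is where the global nature of $f$, defined on the whole boundary, and the product structure of the boundary divisors must be used essentially, in the spirit of the restriction-to-the-boundary method of Gibney, Keel and Morrison; by comparison the Kapranov reduction and the final construction of $g$ are formal.
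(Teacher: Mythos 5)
You correctly translate the problem into Kapranov coordinates and correctly identify the endgame (projections from vital centres are the incarnations of forgetful maps), but the two steps that carry the whole weight are gaps, and one of your assertions is false. First, the extraction of the centres: your disjointness remark gives almost nothing, since when $2r<n-3$ general members $L_t,L_{t'}$ are simply disjoint and the condition ``intersections lie in the vital locus'' is empty. It certainly does not force the family to be a simultaneous projection: already in $\P^{3}$ (the case $n=6$) the secant lines of the rational normal cubic through the five Kapranov points form a covering family of pairwise disjoint lines which is \emph{not} cut out by projections from vital spaces, and nothing in your second paragraph excludes such a family from arising as the fibres of a morphism on $\Mo{6}$ --- ruling this out \emph{is} the theorem, not an input to it. The paper pins down the centres by a numerical argument you do not have: since $F_f$ is a blow-up of the linear space $L$ of dimension $k$, adjunction gives $K_{\Mo{n}}|_{F_f}=-(k+1)\Psi_1|_{F_f}+\Delta$ with $\Delta$ exceptional, and comparing with Proposition~\ref{pic}(i) restricted to $F_f$, using that $\Psi_1$ and the exceptional classes are independent in $Pic(F_f)$, forces the existence of sets $I_1,\ldots,I_l$ with $E_{I_j\cup\{1\}}|_{F_f}=\Psi_1|_{F_f}$ (so each $V^1_{I_j}$ meets $L$ in codimension one) together with the exact identity $n-3-k=\sum_j(n-3-|I_j|)$; these two facts are what exhibit $L$ as a fibre of $\pi_{V^1_{I_1}}\times\cdots\times\pi_{V^1_{I_l}}$. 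Moreover, your claim that independence of the centres is ``equivalently the disjointness of the index sets'' is wrong: what is forced is condition $(**)$ of Proposition~\ref{**}, which allows overlaps. For instance $\phi_{\{2,3,4,5\}}\times\phi_{\{4,5,6,7\}}$ on $\overline{M}_{0,8}$ is a surjective linear fibration whose fibres, via $f_1$, are the $3$-planes cut by hyperplanes through the two centres; all of them contain the fixed line $\langle p_4,p_5\rangle$, whereas fibres of a product of projections from pairwise disjoint vital centres have no common point, so this fibration cannot be rewritten with disjoint index sets.

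Second, the descent of $f$ along $\psi$ cannot be obtained from Zariski's main theorem. Knowing that the general fibres of $f$ and $\psi$ coincide yields a factorisation through $\psi$ only if $\psi$ is equidimensional --- a point the paper flags explicitly --- because your rational map $g$ extends to a morphism only if $f$ contracts \emph{every} fibre of $\psi$, including special fibres of excess dimension, about which your argument says nothing. The paper replaces your appeal to ZMT by a line-bundle descent in two stages: on the blow-up $Y$ of the centres one has $\rk Pic(Y/W)=1$ and $\f$-numerical equivalence coincides with $\f$-linear equivalence, giving $a^{-1}_*\L_1=\f^*\H_1$; then upstairs $\L=f^*\A$ is base point free and numerically trivial on $F_\psi$, hence $\psi$-numerically trivial, and since the forgetful morphism $\psi$ has rational fibres, $\psi$-numerical and $\psi$-linear equivalence coincide, so $\L=\psi^*\H$ for a base point free $\H$, which induces the birational morphism $g$ with $f=g\circ\psi$ everywhere. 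Some substitute for this descent (or an actual proof of equidimensionality of $\psi$) is indispensable; as written, your final paragraph produces only a dominant rational map $g$, not the asserted morphism.
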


Unfortunately, linear fibrations do not exhaust the whole of fiber type morphisms.
Combining results of Sections 3 and 5 we recover the result of Farkas-Gibney and extend the classification
of fibrations on $\overline{M}_{0,n}$ for low $n$ to the case $n=7$. We prove:

\begin{theoremn}
Let  $f: \Mo{n} \to X$
be a dominant morphism with ${\rm dim}X < n-3$.
If $n \leq 7$
then  $f=g \circ h$, where $g$ is a generically finite
morphism and $h$ is a forgetful
morphism. More precisely $h$ is
one of the following  morphisms:
\begin{itemize}
\item[(i)] $\phi_I : \Mo{n} \to \overline{M}_{0,n-|I|}$, with $|I| \leq n-4$;
\item[(ii)] $\phi_I \times \phi_J :\overline{M}_{0,n} \to
\overline{M}_{0,n-|I|} \times \overline{M}_{0,n-|J|}$, where $|I|+|J| \leq 3$ and, if $|I \cap J| \geq 1$, then $n=7$ and $|I \cap J| =1$;
\item[(iii)] $ \phi_{\{i_1,i_2,i_3\}} \times \phi_{\{i_4,i_5,i_6\}} \times \phi_{\{i_1,i_4,i_7\}}:
\overline{M}_{0,7} \to \overline{M}_{0,4} \times \overline{M}_{0,4} \times
\overline{M}_{0,4}$.
\end{itemize}
\end{theoremn}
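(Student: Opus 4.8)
The plan is to combine the theorem on linear fibrations proved earlier in this paper with a reduction, valid only in the range $n\le 7$, asserting that every fiber type morphism is, up to Stein factorization, a \emph{linear} fibration; one then reads off the finite list of forgetful morphisms that can actually occur.

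First I would reduce to the case of a fibration. Given a dominant $f\colon\Mo{n}\to X$ with $\dim X<n-3$, take its Stein factorization $f=g'\circ\tilde f$, where $\tilde f\colon\Mo{n}\to Z$ has connected fibers and $g'\colon Z\to X$ is finite, hence generically finite. Since generically finite morphisms are closed under composition, and in particular absorb the birational morphisms produced by the linear fibration theorem, it suffices to prove that the fibration $\tilde f$ factors through one of the forgetful morphisms in the list. Thus from now on I may assume that $f$ itself is a fibration.

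The core of the argument, and the step I expect to be the main obstacle, is to show that for $n\le 7$ every such fibration is linear. Via one of Kapranov's morphisms $f_i\colon\Mo{n}\to\P^{n-3}$, the fibration $f$ is presented by a movable linear system on $\P^{n-3}$ whose base locus is supported on vital spaces, and its general fiber is the proper transform of the general fiber of the induced rational map $\P^{n-3}\dashrightarrow X$; one must exclude this general fiber being anything other than the Kapranov blow-up of a linear subspace. This is carried out by the explicit analysis of Section 5, treating separately the possible fiber dimensions (for $n=7$ the cases $\dim X=1,2,3$, i.e.\ fibers of dimension $3,2,1$) and invoking the classification of the relevant contractions and vital-space configurations of Section 3. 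For $n\le 6$ this recovers the results of Farkas--Gibney and Keel--McKernan; the genuinely new input is $n=7$, where the vital spaces are still few enough to permit a complete analysis, whereas for larger $n$ one cannot yet rule out exotic, non-linear fibrations, which is precisely why the statement remains conjectural in general.

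Once $f$ is known to be a linear fibration, I would apply the linear fibration theorem to factor it as $f=g\circ\psi$ with $\psi=\prod_{j=1}^{k}\phi_{I_j}$ a forgetful morphism and $g$ birational; composing with the finite map $g'$ of the first step yields the generically finite $g$ required in the statement. It then remains to enumerate, for $3\le n\le 7$, the forgetful morphisms $\psi$ that are genuinely of fiber type, i.e.\ whose image has dimension strictly smaller than $n-3$ and which do not reduce to a product with fewer or smaller factors. This is a finite combinatorial check based on $\dim\overline{M}_{0,m}=m-3$ together with the description, from Section 3, of the image dimension of a product of forgetful maps in terms of the overlaps of the index sets $I_j$: a positive-dimensional general fiber arises either because the commonly remembered points are too few to rigidify the gluing of the factors, or because some points are forgotten by every factor and hence move freely. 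A single factor gives case (i); two factors, under the numerical and incidence conditions singled out in (ii), give the product $\phi_I\times\phi_J$; and, for $n=7$ only, three factors whose index sets meet in the prescribed pattern give the triple product (iii) onto $(\overline{M}_{0,4})^3$. Collecting these possibilities and discarding those that factor through a morphism already in the list completes the proof.
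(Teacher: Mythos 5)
There is a genuine gap, and it sits exactly where you predicted the main obstacle would be: your claimed reduction, ``for $n\le 7$ every fibration is, up to Stein factorization, a \emph{linear} fibration,'' is false, and case (iii) of the very statement you are proving is the counterexample. A linear fibration (Definition~\ref{def:linf}) requires some Kapranov map $f_i$ to send the general fiber to a linear space; for the fibration $\phi_{\{i_1,i_2,i_3\}}\times\phi_{\{i_4,i_5,i_6\}}\times\phi_{\{i_1,i_4,i_7\}}$ on $\Mo{7}$ the general fiber is a conic in \emph{every} Kapranov model --- the paper stresses that these fibers ``cannot be reduced to lines by any standard Cremona transformation,'' and since $\omega_j^\K=f_j\circ f_i^{-1}$, that is precisely the statement that $f_j(F_f)$ is never a line. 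So a proof that all fibrations for $n\le 7$ are linear would in fact disprove the occurrence of case (iii), and Theorem~\ref{th:linear} cannot be the engine that produces it. (Relatedly, your closing remark that for larger $n$ one ``cannot yet rule out exotic, non-linear fibrations'' misreads the situation: the paper \emph{constructs} non-linear fibrations --- forgetful morphisms with elliptic fibers on $\Mo{8}$ and fibers of genus $(k-4)2^{k-3}+1$ on $\Mo{2k+2}$ --- and the conjecture is open not because such fibrations might exist but because one must show every fibration still factors through a forgetful morphism.)

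Consequently the deferral of ``the core of the argument'' to ``the explicit analysis of Sections 3 and 5'' is circular: that analysis is the proof, and it is organized around a different dichotomy than linearity. The paper splits by fiber/target dimension: for relative dimension one (so $\dim X=n-4$, covering $\dim X=3$ when $n=7$) it classifies fibrations by curves directly via the numerical invariants $m_I=\L^{n-4}\cdot E_I$ and $d_i=\Psi_i\cdot\L^{n-4}$, using the genus identities of Lemma~\ref{lem:conti} (which force $g=0$ and bound the $d_i$, e.g.\ excluding $g=1$ for $n=7$ via $\sum_{|I|=2}m_I=0$), together with the restriction machinery of Lemmata~\ref{switch}, \ref{lem:restrizione} and \ref{lem:restnum} to cut down to boundary divisors $E_I\iso\Mo{n-1}$; the conic fibration of case (iii) emerges from this multiplicity bookkeeping and is shown to factor because the specific forgetful morphism has equidimensional fibers, not via Theorem~\ref{th:linear}. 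For $\dim X\le 2$ the paper argues by induction on $n$ (restricting to a vital hyperplane meeting the general fiber generically, killing the fixed part of the restricted system) and then pins down the base locus of $\L_1$ as a codimension-$3$ linear space or a pair of codimension-$2$ spaces, computing $\NE(Y)$ of the corresponding (Fano) blow-up to see there is a unique fiber type morphism onto a surface. Your Stein-factorization reduction and your final combinatorial enumeration of admissible forgetful morphisms are fine as far as they go, but without the numerical classification of fibrations by curves and the Mori-cone argument --- neither of which your proposal supplies or correctly anticipates --- the proof is missing its substance.
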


It is not clear to us how to deduce such statement from the very recent description of the 
Mori cone of $\overline{M}_{0,7}$ due to \cite {La}.\noindent

Case (iii) in above Theorem corresponds to a fibration on $\overline{M}_{0,7}$ whose fibers
are rational curves which cannot be reduced to lines by any standard Cremona transformation.
In Section 4 we give the example of a forgetful morphism on $\overline{M}_{0,8}$ whose fibers
are elliptic curves. We extend this example providing a series of forgetful morphisms on $\overline{M}_{0,2k+2}$
whose fibers are curves of arbitrary large genus $(k-4)2^{k-3}+1$. 
In order to understand these nonlinear examples, we are naturally
led to extend a criterion of Castravet-Tevelev in \cite{CT} in order 
to recognise surjectivity of forgetful morphisms.

\begin{theoremn}
Let sets $I_1, \ldots, I_k \subset \{1, \ldots, n \}$ be given, such that
if $i \ne j$, $I_i$ is not a subset of $I_j$.
The forgetful morphism
$\psi= \prod_{j=1}^k \phi_{I_j}: \Mo{n} \to \prod_{j=1}^k \overline{M}_{g,n-|I_j|}$
is surjective if and
only if for any $S \subseteq \{1, \ldots k \}$ we have:
$$ (*) \qquad n-|\cap_{j \in S}I_j|-3 \geq \sum_{j \in S}(n-|I_j|-3)$$
and $n-3-\sum_{j=1}^k(n-|I_j|-3)=:h \geq 0$.
\end{theoremn}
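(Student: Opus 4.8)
The plan is to reduce surjectivity to a transversality statement in linear algebra via Kapranov's model, and then to read off $(*)$ as the condition that a family of generic subspaces be in direct sum. Since $\overline{M}_{0,n}$ and the target product are projective and $\psi$ is a morphism, $\psi$ is surjective if and only if it is dominant, hence if and only if the differential $d\psi$ attains its maximal possible rank $\sum_{j}(n-|I_j|-3)=\sum_j m_j$ at a general point. I would compute this rank in Kapranov's chart: fix the model $\P^{n-3}=\P(V)$ with $V=\C^{n-2}$, given by $n-1$ general points with associated subspaces $W_j=\langle e_i : i\in I_j\rangle\subseteq V$, so that (up to a standard Cremona transformation, which is a local isomorphism at a general point and therefore does not affect the rank of $d\psi$) each $\phi_{I_j}$ is the linear projection $\P(V)\dashrightarrow \P(V/W_j)$. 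A direct computation of the differential of a product of projections at a general $[x]$ shows that the fibre of $\psi$ through $[x]$ is, up to the birational Kapranov identification, $\P\big(\bigcap_j (W_j+\langle x\rangle)\big)$, and that $\psi$ is dominant exactly when this fibre has the expected dimension, i.e. when the natural map $V\to \bigoplus_j V/(W_j+\langle x\rangle)$ is surjective for general $x$. Since $x$ lies in every $W_j+\langle x\rangle$, this intersection is never $0$, which already forces the necessary bound $h\ge 0$.

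Next I would dualise. Writing $x=e_n$, the $n$ vectors $e_1,\dots,e_n$ are general in $\C^{n-2}$, so their relation space $R\subseteq\C^n$ is a general plane of dimension $2$. Setting $T_j=I_j\cup\{n\}$ and $E_j=\{1,\dots,n\}\setminus T_j$, a computation with annihilators identifies the obstruction to surjectivity with the failure of the sum $\sum_j D_j$ to be direct, where $D_j:=R^{\perp}\cap \C^{E_j}\subseteq \C^n$, $R^{\perp}$ a general subspace of codimension $2$ and $\C^{E_j}$ the coordinate subspace on $E_j$; here $\dim D_j=|E_j|-2=m_j$. Thus the entire statement reduces to the linear-algebra assertion that, for generic $R$, the subspaces $D_1,\dots,D_k$ are in direct sum if and only if $(*)$ holds for every $S$, together with $h\ge 0$ for nonemptiness of the general fibre.

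The necessity of $(*)$ is now immediate and is the clean part of the argument. For any $S$ one has the inclusion $\sum_{j\in S}D_j\subseteq R^{\perp}\cap \C^{\cup_{j\in S}E_j}$, and since $\bigcup_{j\in S}E_j=\{1,\dots,n-1\}\setminus\bigcap_{j\in S}I_j$ has cardinality $n-1-|\bigcap_{j\in S}I_j|$, genericity of $R$ gives $\dim\big(R^{\perp}\cap \C^{\cup_{j\in S}E_j}\big)=n-3-|\bigcap_{j\in S}I_j|$. If the full sum is direct then $\sum_{j\in S}m_j=\dim\sum_{j\in S}D_j\le n-3-|\bigcap_{j\in S}I_j|$, which is exactly $(*)$. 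The main obstacle is the converse: assuming $(*)$ for all $S$ (and $h\ge 0$), one must show that the generic subspaces $D_j$ actually achieve directness, i.e. that the inequalities $(*)$ are the only obstructions and that no further, unexpected degeneracy occurs for generic $R$. I would prove this by induction on $k$, arguing that the nonnegative deficiency $\delta=\sum_j m_j-\dim\sum_j D_j$ is controlled by a submodular rank function and that, if $\delta>0$, a minimal "bad" subfamily produces a subset $S$ violating $(*)$; the key input is that for generic $R$ every relevant intersection $R^{\perp}\cap \C^{F}$ has the expected dimension $\max(0,|F|-2)$, so that the rank of $\sum_{j\in S}D_j$ equals the minimum allowed by the support bounds. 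This is the step where the genericity of the $n$ points enters essentially, through the generality of the two-dimensional relation space $R$.

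Finally I would record the role of the hypothesis $I_i\not\subseteq I_j$. If $I_i\subseteq I_j$ then $\phi_{I_j}$ factors through $\phi_{I_i}$, so the image of $\phi_{I_i}\times\phi_{I_j}$ is forced onto a proper graph and can never be the whole product unless the redundant factor is trivial; correspondingly, $(*)$ for $S=\{i,j\}$ would collapse to $m_j\le 0$. Excluding such inclusions is therefore exactly what makes $(*)$ the sharp criterion rather than a condition forcing degenerate factors, and it guarantees that the centres $W_j$ may be realised simultaneously, in the Kapranov chart, as the essential vital spaces used in the rank computation above.
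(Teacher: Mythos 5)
Your necessity argument is sound and essentially matches the paper's (the paper factors the partial product $\prod_{j\in S}\phi_{I_j}$ through $\phi_{\cap_{j\in S}I_j}$ and counts dimensions; your dual computation with $R^{\perp}\cap\C^{\cup_{j\in S}E_j}$ encodes the same count). The genuine gap is in sufficiency, and it sits exactly at the point that makes this theorem new relative to Castravet--Tevelev: the case $\bigcup_j I_j=\{1,\ldots,n\}$, where no index is left unforgotten. You claim that ``up to a standard Cremona transformation'' each $\phi_{I_j}$ is the linear projection from $W_j$ and that the fibre of $\psi$ through $[x]$ is $\P\bigl(\bigcap_j(W_j+\langle x\rangle)\bigr)$. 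A Cremona transformation linearises \emph{one} factor at a time, by passing to a chart $f_m$ with $m\notin I_j$; when every index lies in some $I_j$ there is no chart in which all factors are simultaneously linear, and the rank of $d\psi$ is a joint condition on all kernels at once, so it cannot be computed factor-by-factor in different charts. Concretely, by Lemma \ref{lem:fibers}, in the chart $f_1$ a factor with $1\in I_j$ has fibres that are cones over rational normal curves, not linear spaces, and the fibres of $\psi$ are genuinely nonlinear in this regime: in Example \ref{ex:elliptic} they are smooth elliptic curves, and in Example \ref{ex2} curves of genus $(k-4)2^{k-3}+1$, so they are certainly not of the form $\P\bigl(\bigcap_j(W_j+\langle x\rangle)\bigr)$.

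The infinitesimal version of the repair you would need makes the obstruction visible: for $1\in I_j$ the tangent space at a general $x$ to the cone fibre is $\widehat{W}_j+\langle x\rangle+T_xC_x$, where $C_x$ is the rational normal curve through $x$ and the Kapranov set (the fibre of $f_1\circ\phi_1\circ f_1^{-1}$), so the kernel of $d\psi$ is $\bigcap_{j\le t}\bigl(W_j+\langle x,v_x\rangle\bigr)\cap\bigcap_{j>t}\bigl(W_j+\langle x\rangle\bigr)$ with $v_x$ the tangent direction of $C_x$. Crucially, these curves foliate $\P^{n-3}$, so $v_x$ is \emph{determined} by $x$ and the pair $(x,v_x)$ cannot be treated as generic; your reduction to directness of the generic subspaces $D_j$ therefore breaks down, because the two linear conditions cut by $R^{\perp}$ are no longer independent of the extra direction. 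This is precisely the transversality the paper must, and does, prove: Proposition \ref{presurj} enlarges the centres to $U_j'=\langle U_j,x\rangle$ with $x$ a general point to get a surjective product of linear projections, and the proof of Theorem \ref{surj} then shows that a general fibre $H_{u,v}$ of an auxiliary codimension-$(h+1)$ projection, being birational to a complete intersection of hyperplane pullbacks, cannot contain a rational normal curve through all the Kapranov points of a factor --- i.e.\ that the linear model is transverse to the fibres of $\phi_1$. Nothing in your outline supplies this step. Two smaller remarks: in the sub-case $\bigcup_jI_j\neq\{1,\ldots,n\}$ your linear-algebra reduction is correct but amounts to reproving the Castravet--Tevelev criterion, which the paper simply cites; and your Hall-type sufficiency for the generic arrangement $D_1,\ldots,D_k$ is only sketched via submodularity, not proved.
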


Finally, in Section 5, we give further evidence to the Fibration-Conjecture
in this way:

\begin{theoremn}
Let  $f: \Mo{n} \to X$
be a dominant morphism with ${\rm dim}X \leq 2$.
Then  $f=g \circ h$, where $g$ is a generically finite
morphism and $h$ is a forgetful
morphism. 
\end{theoremn}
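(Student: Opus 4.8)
The plan is to reduce to the case of a fibration with connected fibres and then to prove that such a fibration to a curve or a surface is a \emph{linear} fibration, so that our Theorem on linear fibrations applies. First I would take the Stein factorization $f = g_0 \circ \bar f$, where $\bar f : \Mo{n} \to \bar X$ has connected fibres and $g_0 : \bar X \to X$ is finite; since $\Mo{n}$ is smooth, $\bar X$ is normal. A finite map is generically finite, and the composition of a generically finite map with a birational one is again generically finite, so it suffices to treat $\bar f$: if $\bar f = g_1 \circ h$ with $g_1$ birational and $h$ forgetful, then $f = (g_0\circ g_1)\circ h$ has the required form. Thus from now on I assume $f$ has connected fibres and $\dim X \le 2$; when $\dim X = 1$, rationality of $\Mo{n}$ and normality of $X$ force $X \cong \P^1$.

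Set $N = n-3$ and fix a Kapranov morphism $\f = f_i : \Mo{n} \to \P^{N}$. By the Theorem on linear fibrations it is enough to prove that $f$ is a linear fibration, that is, that its general fibre $F$ is the Kapranov transform of a linear subspace of $\P^N$. Since $f$ has connected fibres, $\dim F = N - \dim X \ge N-2$, and because $\f$ is birational the images $\f(F) \subset \P^N$ form a family of subvarieties of codimension $\le 2$ sweeping out $\P^N$ as $F$ ranges over the fibres of $f$; this family is parametrized by a dense open subset of $X$, hence by at most two parameters.

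The core of the argument is to show that $\f(F)$ is a linear subspace, equivalently that $f$ is a linear fibration. The pullback $D = f^{*}A$ of an ample class $A$ on $X$ is a nef divisor of numerical dimension $\dim X \le 2$, and under Kapranov's identification $|mD|$ corresponds to a linear system on $\P^N$ whose base locus is supported on vital spaces, as analyzed in \cite{BM}. The point to establish is that the covering family $\{\f(F)\}$, of codimension $\le 2$ and depending on at most two parameters, consists of linear subspaces. For $\dim X = 1$ this is a covering pencil of hypersurfaces, and one must rule out degree $>1$ by combining the confinement of the base locus to vital spaces with the facts that the members cover $\P^N$ and are strict transforms of fibres, hence smooth and meeting the boundary in the prescribed modular fashion. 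For $\dim X = 2$ I would argue analogously for the codimension-two family, or reduce to the curve case by slicing $X$ with a general pencil of curves and intersecting the two resulting linear families that cut out $F$. This linearity step is the main obstacle: it is exactly where the special projective geometry of $\Mo{n}$ in $\P^N$ and the description of base loci from \cite{BM} are needed in order to exclude positive-dimensional covering families of subvarieties of degree $>1$.

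Once $\f(F)$ is known to be a linear subspace, $F$ is by definition the general fibre of a linear fibration, so the Theorem on linear fibrations produces a birational morphism $g_1$ and a forgetful morphism $h$ with $\bar f = g_1 \circ h$. Composing with the finite map $g_0$ from the Stein factorization gives $f = (g_0 \circ g_1) \circ h$, where $g_0 \circ g_1$ is generically finite and $h$ is forgetful, which is the asserted factorization.
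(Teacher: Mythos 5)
Your Stein-factorization reduction coincides with the paper's first step, but everything after it rests on a claim that you leave unproved and that is in fact false in the range you need. You say it ``is enough to prove that $f$ is a linear fibration'' and then defer the linearity of the covering family to ``the main obstacle'' --- so no argument is actually given: for $\dim X=1$ that step is exactly the content of \cite[Corollary 3.9]{BM}, which the paper cites rather than reproves. Worse, for $\dim X=2$ linearity simply fails. Consider $\psi=\phi_{\{1,2,3,4\}}\times\phi_{\{5,6,7,8\}}\colon \Mo{8}\to \Mo{4}\times\Mo{4}$. It is surjective by Theorem \ref{surj} (condition $(*)$ reads $5\geq 1+1$, and $h=3\geq 0$), and its general fibre is an irreducible threefold (fix both cross-ratios: the configurations form an irreducible family). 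Since every index lies in one of the two forgotten sets, for \emph{every} Kapranov map $f_i$ one of the two factors has, by Lemma \ref{lem:fibers}, fibres mapping to quadric cones with vertex a vital plane over a conic through the remaining four Kapranov points, while the other factor's fibres map to $4$-planes; hence $f_i(F_\psi)$ is an irreducible quadric threefold. It cannot be a $3$-plane: the only $3$-planes inside such a cone contain the vertex plane, and a $3$-plane containing the vertex plane cannot also contain the four points $p_5,\ldots,p_8$ that $f_i(F_\psi)$ passes through. So Definition \ref{def:linf} is satisfied for no $i$, and your proposed reduction to Theorem \ref{th:linear} cannot cover all fibrations onto surfaces; your fallback of slicing $X$ by a pencil and intersecting the two resulting families does not repair this, since the fibres remain the nonlinear ones above.

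The paper's actual proof takes a route none of whose ingredients appears in your proposal. The curve case is delegated to \cite{BM}; for $\dim X=2$ it proceeds by induction on $n$, with base case $n=6$ from Proposition \ref{prop:curves}. One first proves the Claim that a general fibre meets some vital hyperplane $E_{2,3}$ at a general point, restricts $f$ to $E_{2,3}\iso\Mo{n-1}$, and factors the restriction by induction through a map onto $\Mo{5}$ or $\Mo{4}\times\Mo{4}$ --- taking care that the restricted morphism may have non-connected fibres, which forces a separate argument (varying the vital divisor, as in \cite{BM}) that the fixed part of the restricted linear system is empty. Varying the hyperplane then identifies $\Bl\L_1$ as a codimension-three linear space or a pair of codimension-two linear spaces, and the conclusion comes from a Mori-cone computation on the blow-up $Y$ of $\P^{n-3}$ along $\Bl\L_1$: in each case $Y$ is Fano, so $NE(Y)=\NEbar(Y)$ is generated by explicitly listed classes, and there is a unique fibre-type morphism from $Y$ onto a surface, forcing $f$ to factor as required. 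As written, your proof establishes the theorem only after assuming precisely its hardest part, and the assumed statement is false for surface targets.
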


The second named author would like to
thank Gavril Farkas for rising his
attention on Kapranov's paper,
\cite{Ka}, and the possibility to use
projective techniques in the study of $\Mo{n}$.
It is a pleasure to thank Gavril Farkas,
Angela Gibney,
 and Daniel Krashen, for their
enthusiasm  in a preliminary version of
these papers and for their suggestions to
clarify the contents. 

\section{Preliminaries}
\label{sec:notation}
We work over the field of complex numbers. An n-pointed curve
of arithmetic genus $0$ is the datum $(C; q_1, \ldots, q_n)$ of
a tree of smooth rational curves and $n$ ordered points on the
nonsingular locus of $C$ such that each component of $C$
has at least three points which are either marked or singular
points of $C$. If $n \geq 3$, $\Mo{n}$ is the smooth
$(n-3)$-dimensional scheme
constructed by Deligne-Mumford, which is the fine moduli scheme
of isomorphism classes $[(C; q_1, \ldots, q_n)]$ of stable n-pointed curves
of arithmetic genus $0$.

\noindent For any $ i \in \{1, \ldots, n \}$ the forgetful map
$$\phi_i : \Mo{n} \to \overline{M}_{0,n-1}$$
is the surjective morphism which associates
to the isomorphism class $[(C; q_1, \ldots, q_n)]$ of
a stable n-pointed rational curve $(C; q_1, \ldots, q_n)$
the isomorphism class of the $(n-1)$-pointed stable
rational curve obtained by forgetting $q_i$ and, if any,
contracting to a point a component of $C$ containing
only $q_i$, one node of $C$ and another marked point, say $q_j$.
The locus of such curves forms a divisor,
that we will denote by $E_{i,j}$. The morphism $\phi_i$ also plays the
role of the universal curve morphism, so that its
fibers are all rational curves transverse to
$n-1$ divisors $ E_{i,j}$.
Divisors $E_{i,j}$ are the images
of $n-1$ sections $s_{i,j}: \Mo{n-1} \to \Mo{n}$
of $\phi_i$. The section $s_{i,j}$ associates to
$[(C; q_1, \ldots,q_i^{\vee}, \ldots,  q_n)]$
the isomorphism class of the n-pointed stable
rational curve obtained by adding at $q_j$ a smooth rational curve
with marking of two points, labelled by $q_i$ and $q_j$.
Analogously, for every $I \subset \{1, \ldots, n \}$,
we have well defined forgetful
maps $\phi_I : \Mo{n} \to \overline{M}_{0,n-|I|}$.
>From our point of view the important part of a forgetful map is the set 
of forgotten index, more than the actual marking of the remaining. 
For this we slightly abuse the language and introduce the following definition.
\begin{definition}
\label{def:forg} A forgetful map is the composition of $\phi_I:\Mo{n}\to \Mo{r}$
 with an automorphism $g\in Aut(\Mo{r})$.
More generally a forgetful morphism is any product $\psi= \prod_{j=1}^k \phi_{I_j}$ of forgetful maps.
\end{definition}
\noindent In order to avoid trivial cases we will always
tacitly consider $\phi_I$ only if $n-|I| \geq 4$. Forgetful morphisms are basic examples
of morphisms on $\Mo{n}$. In this paper we will be interested mainly in forgetful morphisms
which are fiber type morphisms: 

\begin{definition} A dominant morphism $f:X\to Y$ is called a fiber type morphism if the dimension of the general fiber
is positive, i.e. $\dim X>\dim Y$. A fiber type morphism is said a fibration
if it has connected fibers. If $f$ is a fibration we denote by $F_f$ a general fiber of $f$.
\label{def:fibr}
\end{definition}

Besides the canonical class $K_{\Mo{n}}$, on
$\Mo{n}$ are defined line bundles $\Psi_i$ for each $i \in \{1, \ldots, n \}$ as follows:
the fiber of $\Psi_i$ at a point $[(C; q_1, \ldots, q_n)]$ is the tangent line $T_{C,p_i}$.
Kapranov, in \cite{Ka} proves the following:

\begin{theorem}
Let $p_1,\ldots,p_n \in \P^{n-2}$ be points in linear
general position. Let $H_n$ be the Hilbert scheme
of rational curves of degree $n-2$ in $\P^{n-2}$.
$\Mo{n}$ is isomorphic
to the subscheme $H \subset H_n$ parametrizing
curves containing $p_1, \ldots, p_n$.
For each $i \in  \{1,\ldots,n\}$ the line bundle
$\Psi_i$ is big and globally generated
and it induces a morphism $f_i : \Mo{n} \to \P^{n-3}$
which is an iterated blow-up
of the projections from $p_i$ of the given points
and of all strict transforms of the
linear spaces they generate, in order of increasing dimension.
\label{kapranov}
\end{theorem}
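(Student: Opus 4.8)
The plan is to prove the two assertions separately: first the identification of $\Mo{n}$ with the Hilbert-scheme family $H$, and then the analysis of $\Psi_i$ and the morphism $f_i$. I would begin with a dimension count and the correspondence on interiors. Rational normal curves of degree $n-2$ in $\P^{n-2}$ form a family of dimension $(n-3)(n+1)$, and passage through a general point imposes $n-3$ conditions, so the curves through $p_1,\dots,p_n$ sweep out a family of dimension $(n-3)(n+1)-n(n-3)=n-3=\dim\Mo{n}$. The canonical correspondence on interiors rests on the simple transitivity of $\mathrm{PGL}_{n-1}$ on ordered $n$-tuples of points in general position in $\P^{n-2}$ (projective frames): given a smooth pointed curve $(\P^1;x_1,\dots,x_n)$, embed $\P^1$ by the complete system $|\O_{\P^1}(n-2)|$ as a standard rational normal curve; its marked points are automatically in linear general position, so there is a unique $g\in\mathrm{PGL}_{n-1}$ carrying them to $p_1,\dots,p_n$, and $C:=g(\P^1)$ is a rational normal curve through the $p_i$ canonically attached to the pointed curve. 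This gives a bijection between $M_{0,n}$ and the locus $H^{\circ}\subset H$ of smooth curves, with inverse sending a smooth $C$ to the abstract pointed curve $(C;p_1,\dots,p_n)$.

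Next I would upgrade this to an isomorphism of schemes and extend it across the boundary. The natural move is to produce the correspondence in families: the universal curve $\Mo{n+1}\to\Mo{n}$ together with the embedding above yields a flat family of degree $n-2$ curves in $\P^{n-2}$ through the $p_i$, hence a morphism $\Mo{n}\to H_n$ landing in $H$, using that $\Mo{n}$ is a fine moduli space. The real content is to show that the limits over $\partial\Mo{n}$ of the embedded rational normal curves are exactly the degenerate (connected, nodal, arithmetic genus $0$) curves of degree $n-2$ through $p_1,\dots,p_n$ parametrized by $H$, and that these limits stay flat with the correct Hilbert polynomial: a stable tree $(C;q_1,\dots,q_n)$ should map to a union of rational normal curves of its components of total degree $n-2$ through the prescribed points, and reconstructing the abstract stable curve from such a degenerate embedded curve gives the inverse morphism. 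Since $\Mo{n}$ is smooth (hence reduced) and the two constructions are mutually inverse on the dense interior, the maps are inverse isomorphisms. Matching the boundary strata of $\Mo{n}$ with the degenerate loci of $H$ while preserving flatness is the first delicate point.

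Turning to $\Psi_i$, I would identify its associated map concretely. Under the identification of the previous part, a curve $C$ has a well-defined tangent direction $T_{C,p_i}$ at $p_i$, a point of the space $\P(T_{p_i}\P^{n-2})\cong\P^{n-3}$ of lines through $p_i$; equivalently, projection from $p_i$ sends that direction to a point of $\P^{n-3}$. Assigning to $C$ this tangent direction defines a morphism $f_i:\Mo{n}\to\P^{n-3}$, and I would verify $f_i^{*}\O_{\P^{n-3}}(1)=\Psi_i$ by comparing fibres, since the tangent line to the embedded curve at $p_i$ is precisely what $\Psi_i$ records. Because $f_i$ is a genuine morphism to projective space pulling back $\O(1)$, the system $|\Psi_i|$ is base-point free, i.e. $\Psi_i$ is globally generated; bigness then follows once the blow-up description is established, an iterated blow-up of $\P^{n-3}$ being birational onto it.

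Finally, for the iterated blow-up statement I would argue by induction on $n$, exploiting the universal-curve fibration $\pi:\Mo{n}\to\Mo{n-1}$ and the inductive Kapranov model of $\Mo{n-1}$ over $\P^{n-4}$. The projected points $\bar p_j=\pi_{p_i}(p_j)$, $j\neq i$, are $n-1$ points in general position in $\P^{n-3}$, and $f_i$ is an isomorphism on the interior onto the complement of the linear spaces they span. I would then show, center by center in order of increasing dimension, that the indeterminacy of $f_i^{-1}$ is resolved exactly by blowing up the strict transforms of the spans of subsets of the $\bar p_j$, identifying each exceptional divisor with a boundary divisor $E$ of $\Mo{n}$ of the matching combinatorial type (a component breaking off through a prescribed subset of the $p_j$). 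The verification that these centers occur in precisely this order, and with the correct reduced smooth scheme structure — equivalently, that $\Mo{n}$ is this specific iterated blow-up and not merely birational to it — is the main obstacle; I expect to control it through the boundary stratification of $\Mo{n}$ and the behaviour of the embedded family in $H$ near each stratum, matching the codimension-one degenerations of the curve with the exceptional divisors created at each successive stage.
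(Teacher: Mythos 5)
First, a point of reference: the paper offers no proof of this statement at all --- it is quoted verbatim as Kapranov's theorem with a citation to \cite{Ka} --- so there is no internal proof to compare against; your proposal has to be judged against what a complete proof actually requires, and there it has genuine gaps at exactly the two places you yourself flag as ``delicate.'' The most serious one is the construction of the morphism $\Mo{n}\to H$. Your fiberwise recipe --- embed $(\P^1;x_1,\dots,x_n)$ by $|\O_{\P^1}(n-2)|$ and move the marked points onto the frame $p_1,\dots,p_n$ by the simple transitivity of $\mathrm{PGL}_{n-1}$ --- is correct on the interior but literally meaningless on nodal fibers, and ``taking limits over $\partial\Mo{n}$'' does not by itself define a morphism to a Hilbert scheme: for that you need an actual flat family of closed subschemes of $\P^{n-2}$ over all of $\Mo{n}$. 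The missing idea, which is the engine of Kapranov's proof, is the canonical line bundle $\omega_{\pi}(\sigma_1+\cdots+\sigma_n)$ on the universal curve $\pi:\Mo{n+1}\to\Mo{n}$: it restricts to $\O(n-2)$ on smooth fibers, has degree $\geq 1$ on every component of every stable fiber (a component with $k$ special points gets degree $k-2$, which is positive only because of stability --- this is where the stability condition enters), is relatively very ample with $\pi_*$ a rank-$(n-1)$ bundle, and its trivialization normalized by the $n$ sections embeds the whole family into the fixed $\P^{n-2}$ with marked points at $p_1,\dots,p_n$. That single construction delivers simultaneously the extension over the boundary, the flatness (constant Hilbert polynomial $ (n-2)m+1$), and the stratum-by-stratum matching you list as your ``first delicate point.'' Note also a small misattribution: the universal property that produces $\Mo{n}\to H_n$ is that of the Hilbert scheme, not the fine-moduli property of $\Mo{n}$; the latter is what you need for the inverse morphism, and there you must additionally check that the universal subscheme over $H$ is a family of \emph{stable} pointed curves (reduced, nodal, markings in the smooth locus), which your sketch does not address.

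The second gap is the iterated blow-up description, which is the part of the theorem this paper actually uses (it underlies the vital-space formalism, Proposition \ref{pic}, and all the Cremona bookkeeping). Your proposal correctly identifies $f_i$ as the tangent-direction map $C\mapsto T_{C,p_i}\in\P(T_{p_i}\P^{n-2})\cong\P^{n-3}$ and correctly reduces global generation to the existence of this morphism, but the assertion that $\Mo{n}$ \emph{is} the blow-up of $\P^{n-3}$ along the strict transforms of the spans of the projected points, in order of increasing dimension --- as opposed to merely dominating it birationally with exceptional locus over the vital spaces --- is precisely the content, and ``I expect to control it through the boundary stratification'' is a statement of intent, not an argument. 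A workable route (close to what you gesture at) is induction via the compatibility of $f_i$ with forgetting a point $j\neq i$, i.e.\ the commutation of $f_i$ with the linear projection $\P^{n-3}\dashrightarrow\P^{n-4}$ from $\bar p_j$, plus an identification of each exceptional divisor with the boundary divisor $E_I$ of matching type and a local computation showing the centers are smooth after the previous blow-ups; but none of this is carried out, and it is exactly where a bijective-on-points birational morphism could still fail to be the stated iterated blow-up (one must rule out, e.g., non-reduced or mis-ordered centers). In summary: your outline reproduces the right global strategy --- essentially Kapranov's --- and the dimension count, the frame-matching bijection on interiors, and the interpretation of $f_i$ are all sound, but the two steps you defer are not routine verifications; they are the theorem, and the proposal as written does not prove it.
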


We will use the following notations:

\begin{definition} A Kapranov set
  $\K\subset\P^{n-3}$ is an ordered  set of $(n-1)$ points
  in linear general position, labelled
  by a subset of $\{1,\ldots, n\}$.
  For any $J \subset \K$, the linear span
  of points in $J$ is said a vital linear subspace
  of $\P^{n-3}$. A vital cycle is any union of vital
  linear subspaces.
\end{definition}

To any Kapranov set, labelled by
$\{1,\ldots, i-1, i+1,\ldots, n\}$,
is uniquely associated a Kapranov map,
$f_i:\Mo{n}\to\P^{n-3}$, with $\Psi_i= f_i^{*} \O_{\P^{n-3}}(1)$,
and to a Kapranov map is uniquely associated
a Kapranov set up to projectivity.

\begin{definition}\label{kap} Given a subset
  $ I= \{i, i_1,\ldots,i_s\} \subset \{1,\ldots,n\}$
  and the Kapranov map
  $f_i:\Mo{n}\to\P^{n-3}$, let
  $I^*=\{1,\ldots,n\}\setminus I$. Then we indicate
  with
$$H^
{i\vee}_{I^*}:=:V^i_{I\setminus\{i\}}:=:V^i_{i_1,\ldots,i_s}:=\langle p_{i_1},\ldots,
  p_{i_s}\rangle \subset \P^{n-3}$$ the vital linear subspace
  generated by the $p_{i_j}$'s and with
$$E_I:=E_{i,i_1,\ldots,i_s}:=f_i^{-1}(V^i_I)$$ the divisor associated on
  $\Mo{n}$.
\end{definition}
Notice that  $H^{h \vee}_{ij}$ is the
hyperplane missing the points $p_i$
and $p_j$ and the set
$$\K'=\K\setminus\{p_i,p_j\}\cup(
H^{\vee}_{ij}\cap\langle p_i,p_j\rangle)$$
is a Kapranov set in $H^{h \vee}_{ij}$.

 In particular for any $i \in \{1,\ldots,n\}$ and Kapranov
set $\K=\{p_1, \ldots, p_i^{\vee}, \ldots, p_n\}$ divisors
$E_{i,j}=f_{i}^{-1}(p_j)$ are defined and such notation is compatible
with the one adopted for the sections $E_{i,j}$ of $\phi_i$.
More generally, for any $i \in I \subset \{1, \ldots, n \}$ the divisor $E_I$
has the following property: its general
point corresponds to the isomorphism class of a rational
curve with two components, one with
$|I|+1$ marked points, the other with $|I^{*}|+1$ marked points,
glued together at the points not marked by elements of
$\{1,\ldots,n\}$.
It follows from this picture that
$E_I = E_{I^*}$ and that $E_I$ is abstractly isomorphic
to $\overline{M}_{0,|I|+1} \times \overline{M}_{0,|I^{*}|+1}$. The divisors
$E_I$ parametrise singular rational curves, and
they are usually called boundary divisors.
A further property of $E_I$ is that for each choice
of $i \in I, j \in I^{*}$, $E_I$ is a section of the forgetful morphism:
$$ \phi_{I \setminus \{ i \} } \times \phi_{I^{*} \setminus \{ j \} } :
 \Mo{n} \to \overline{M}_{0,|I^{*}|+1} \times \overline{M}_{0,|I|+1}.$$
This morphism is surjective and all fibers are rational curves.
With our notations $f_i(E_I)$ is a vital linear space of dimension $|I|-2$
if $i \in I$ and a vital linear space of dimension $|I^{*}|-2$ if $i \notin I$.

\begin{definition}\label{def:cre} Let $\K$ be a
  Kapranov set with Kapranov map
  $f_i:\Mo{n}\to\P^{n-3}$. Then let
$$\omega^\K_j:\P^{n-3}\rat\P^{n-3}$$
 be the standard Cremona transformation centered on
  $\K\setminus \{p_j\}$. Via Kapranov's
construction we can associate a Kapranov
set  labelled by
$\{1,\ldots,n\}\setminus\{j\}$ to the
rhs $\P^{n-3}$ and in this notation
Kapranov, \cite[Proposition 2.14]{Ka} proved that
$$\omega_j^\K= f_j\circ f_i^{-1}$$ as
birational maps.
By a slight abuse of notation we can define
$\omega_j^\K(V^i_I):=f_j(E_{I,i})$,
even if $\omega_j$ is not defined on the
general point of $V^i_I$.
\end{definition}

\begin{remark}\label{rem:kcre}
Let $\omega^\K_h$ be the standard
Cremona transformation centered on
$\K\setminus\{p_h\}$, and
$\K'$ the Kapranov set associated to the hyperplane $H^{i
    \vee}_{jk}$. Then for $h\neq
j,k$ we have  $\omega^\K_{h|H^{i
    \vee}_{jk}}=\omega^{\K'}_h$. This
extends to arbitrary vital linear spaces.
It follows from the definitions that
$\omega_h^\K(V^i_I)=V^h_{I \setminus \{h\},i }$
if $h \in I$ and $\omega_h^\K(V^i_I)=V^h_{(I\cup\{i\})^{*}
\setminus \{h\}}$ if $h \in I^{*}$.
\end{remark}

We are interested in describing linear systems on $\P^{n-3}$ that are
associated to fiber type morphisms on
$\Mo{n}$. For this purpose we introduce
some definitions.
\begin{definition}\label{def:s} A
  linear system  on a smooth projective variety
  $X$ is uniquely determined
  by a pair $(L,V)$, where $L\in Pic(X)$
  is a line bundle and $V\subseteq
  H^0(X,L)$ is a vector space. If no
  confusion is likely to arise we will
  forget about $V$ and let
  $\L=(L,V)$.  Let
  $g:Y\to X$ be a birational morphism
  between smooth varieties. Let $A\in\L$ be a general
  element and $A_Y=g^{-1}_*A$ the strict
  transform. Then  $g^*L=A_Y+\Delta$ for
  some  effective  $g$-exceptional
  divisor $\Delta$.  The strict transform of
  $\L=(L,V)$ via $g$ is
$$g^{-1}_*\L:=(g^*L-\Delta,V_Y)$$
where $V_Y$ is the vector space spanned by the
strict transform of elements in $V$.
\end{definition}

\begin{definition}\label{def:Mk}  Let $\K\subset\P^{n-3}$
  be a Kapranov set and
  $f_i:\overline{M}_{0,n}\to\P^{n-3}$
  the associated map. An $M_{\K}$-linear system
  on $\P^{n-3}$    is a linear system
  $\L\subseteq|\O_{\P^{n-3}}(d)|$, for some $d$, such that
  $f_{i*}^{-1}\L$ is a base
  point free linear system.
 \end{definition}

A further property inherited from Kapranov's
construction is the following.

\begin{remark} \label{rem:ka}
Let $H^{h \vee}_{ij}$ be the
hyperplane missing the points $p_i$
and $p_j$ and $$\K'=\K\setminus\{p_i,p_j\}\cup
(H^{\vee}_{ij}\cap\langle p_i,p_j\rangle)$$
the associated Kapranov set.
If $\L$ is an $M_\K$-linear system then $\L_{|H^{\vee}_{ij}}$
is an $M_{\K'}$ linear system.
\end{remark}

Basic examples of fiber type morphisms are forgetful maps.
Consider any set $ I \subset \{1, \ldots ,n \} $ and the associated
forgetful map $\phi_I$. If $j \notin I$ a typical diagram we will consider
is
\[
\xymatrix{
\overline{M}_{0,n}\ar[d]^{f_{j}}\ar[r]^{\phi_I}
&\overline{M}_{0,n-|I|}\ar[d]^{f_{h}} \\
\P^{n-3}\ar@{.>}[r]^{\pi_{I}}&\P^{n-|I|}}
\]
where the $f_j$ and $f_h$ are Kapranov maps and
$\pi_{I}$ is the projection from $V^j_{I }$.
In this case an $M_\K$-linear system
associated to $\Phi_I$ is given by
 $|\O(1)\otimes\I_{V^j_{I }}|$
and 
$f_j(F_{\phi_I})$ is a linear space of
dimension $|I|$.

\begin{lemma} \label{lem:fibers}
Let $ I \subset \{1, \ldots ,n \} $ and let  $\phi_I$ be the associated
forgetful map. If $j \notin I$ then $f_j(F_{\phi_I})$ is a linear space of
codimension $|I|$. If $j \in I$ then $f_j(F_{\phi_I})$ is the cone with
vertex $V^j_{I \setminus \{j \}}$ over a rational normal curve of degree
$n-|I|-2$ through $ \K \setminus (I \cap \K)$. If $\psi= \prod_{j=1}^k \phi_{I_j}$
is any surjective forgetful morphism $F_{\psi}= \cap_{j=1}^k F_{\phi_{I_j}}$.
\end{lemma}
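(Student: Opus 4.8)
The plan is to treat the three assertions in turn, the first being the key case from which the other two are derived.

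For the first assertion ($j\notin I$) I would invoke the commutative square displayed just before the statement, in which $f_j$ and $f_h$ are Kapranov maps and $\pi_I$ is the linear projection from the vital space $V^j_I=\langle p_i:i\in I\rangle$, of dimension $|I|-1$. Since $f_h$ is birational, the identity of rational maps $f_h\circ\phi_I=\pi_I\circ f_j$ shows that $f_j$ carries a general fibre $F_{\phi_I}$ onto a general fibre of $\pi_I$. A general fibre of a linear projection from a centre of dimension $|I|-1$ is a linear space of dimension $|I|$ containing $V^j_I$; as $f_j$ is an isomorphism over a dense open set, $f_j(F_{\phi_I})$ is exactly this linear space, of dimension $|I|$ (equivalently codimension $n-3-|I|$).

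For the second assertion ($j\in I$) the same diagram is unavailable, so I would change Kapranov model. Choose any $l\in\{1,\dots,n\}\setminus I$, which exists because $n-|I|\geq 4$. By the first part $\Lambda:=f_l(F_{\phi_I})$ is a linear space of dimension $|I|$ containing $V^l_I=\langle p_i:i\in I\rangle$. By Definition \ref{def:cre} the two Kapranov maps satisfy $f_j=\omega_j^{\K}\circ f_l$, where $\omega_j^{\K}$ is the standard Cremona transformation centred at the $n-2$ fundamental points $\{p_i:i\neq l,j\}$, so that $f_j(F_{\phi_I})=\overline{\omega_j^{\K}(\Lambda)}$. I would then compute this image in coordinates adapted to the Cremona map: take the fundamental points as the coordinate points and note that $\Lambda$ contains the $|I|-1$ coordinate points indexed by $I\setminus\{j\}$, together with the non-fundamental point $p_j$ and one further general point. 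Writing the map as $y_i=1/x_i$, the $|I|-1$ coordinates indexed by $I\setminus\{j\}$ become free, while the remaining $n-|I|-1$ coordinates are reciprocals of linear forms in two parameters and hence sweep out a rational normal curve of degree $n-|I|-2$. This exhibits $\omega_j^{\K}(\Lambda)$ as the cone of vertex the coordinate subspace spanned by the points indexed by $I\setminus\{j\}$ over that curve. Finally I would use Remark \ref{rem:kcre} to identify the vertex with $V^j_{I\setminus\{j\}}$ and to check that the curve passes through the $n-|I|$ Kapranov points $\K\setminus(I\cap\K)$, namely those indexed by the complement of $I$.

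The third assertion is formal: for general $x\in\Mo{n}$ one has $\psi(x)=(\phi_{I_1}(x),\dots,\phi_{I_k}(x))$, whence $F_\psi=\psi^{-1}(\psi(x))=\bigcap_{j=1}^k\phi_{I_j}^{-1}(\phi_{I_j}(x))=\bigcap_{j=1}^k F_{\phi_{I_j}}$, the generality of $x$ together with surjectivity of $\psi$ guaranteeing that each $\phi_{I_j}^{-1}(\phi_{I_j}(x))$ is a general fibre of $\phi_{I_j}$. The only real work lies in the second assertion, and there the main obstacle is the explicit determination of the image of $\Lambda$ under the standard Cremona transformation, in particular keeping track of which fundamental points $\Lambda$ contains so as to read off both the vertex of the cone and the degree of the rational normal curve; the description of how Cremona transformations act on vital spaces is precisely what makes this bookkeeping manageable.
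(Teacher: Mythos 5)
Your proposal is correct, and for the main case $j\in I$ it takes a genuinely different route from the paper's. The paper's proof is much terser: using the factorisation $\phi_I=\phi_{I\setminus\{i\}}\circ\phi_i$ for $i\in I$, it reduces everything to the single forgetful map $\phi_i$, computes $f_i(F_{\phi_i})=\omega_i^\K(f_j(F_{\phi_i}))$ as the Cremona image of a line through $p_i$ --- a rational normal curve of degree $n-3$ through $\K$ --- and then asserts that ``the rest follows easily''. You instead apply the Cremona transformation to the whole $|I|$-dimensional linear fibre $\Lambda=f_l(F_{\phi_I})$ and compute its image in coordinates, which yields the cone description directly: the coordinates indexed by $I\setminus\{j\}$ invert to free parameters (the vertex directions), while the remaining $n-|I|-1$ coordinates are reciprocals of linear forms in the two parameters $(\mu:\nu)$, so that after clearing denominators they are degree-$(n-|I|-2)$ binary forms parametrising the directrix. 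This is in effect the paper's line computation carried out for an arbitrary linear space through the centres, and it buys a complete proof of the cone statement where the paper only gestures at one; the paper's reduction, in turn, buys the degree-$(n-3)$ rational normal curve description of fibres of $\phi_i$ through all of $\K$, which is what gets reused later (e.g.\ in Lemma \ref{lem:zero}). Your treatments of the case $j\notin I$ (via the projection diagram preceding the lemma) and of the fibre of a product $\psi=\prod\phi_{I_j}$ coincide with what the paper intends by ``the rest follows easily''.

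Two small points of bookkeeping. First, your parenthetical ``codimension $n-3-|I|$'' silently corrects the lemma, whose ``codimension $|I|$'' is a slip for ``dimension $|I|$'' (compare the sentence immediately preceding the lemma in the paper). Second, the one step you defer --- checking via Remark \ref{rem:kcre} that the directrix passes through all of $\K\setminus(I\cap\K)$ --- cannot literally be carried out: in the target Kapranov set labelled by $\{1,\ldots,n\}\setminus\{j\}$, your directrix contains the $n-|I|-1$ points with labels outside $I\cup\{l\}$, but the point labelled $l$ (the image of the unit point $p_j$) lies on the cone and not on the directrix. Indeed, for $|I|\geq 2$ no rational normal curve of degree $n-|I|-2$ can contain all $n-|I|$ points of $\K\setminus(I\cap\K)$, since these are in linearly general position and hence span a $\P^{n-|I|-1}$, whereas such a curve spans only a $\P^{n-|I|-2}$. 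This imprecision sits in the lemma's own wording (the correct assertion is that the \emph{cone} contains those points, its directrix being the rational normal curve through their projections from the vertex), and your explicit computation actually proves the correct version; you should just flag the discrepancy instead of promising a check that would fail.
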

\begin{proof}
Since, for any $i \in \{1, \ldots, n \}$, and any $i \in I$, we have 
a factorisation $\phi_I=  \phi_{I \setminus \{ i\}} \circ \phi_i$, it is enough
to understand the geometry of $f_i(F_{\phi_i})$. 
But $f_i(F_{\phi_i})= \omega_i^\K(f_j(F_{\phi_i}))$ and
for any $ j \ne i$ $f_j(F_{\phi_i})$ is a line through the point $p_i$;
it then follows that  $f_i(F_{\phi_i})$ is a rational normal curve 
of degree $n-3$ passing through $K$.
The rest follows easily from this.
\end{proof}

\noindent Kapranov's description of $\Mo{n}$
easily allows us to describe generators and relations
for the Picard group of line bundles on $\Mo{n}$; 
in next proposition we prove by projective techniques
a formula by Pandharipande, see \cite{Pa}:

\begin{proposition}
The Picard group of line bundles on $\Mo{n}$ is
generated by $K_M$ and the boundary divisors $E_I$.
The following relations hold:
\begin{itemize}
\item[(i)]for every $i  \in \{1, \ldots, n \}$,
$K_M=(2-n)\Psi_i + \sum_{i \in I}(n-2-|I|)E_{I};$
\item[(ii)] for every $i \ne j \in \{1, \ldots, n \}$,
$\Psi_i+\Psi_j=\sum_{i,j \in I}(n-2-|I|)E_I-K_M;$
\item[(iii)] for every $i  \in \{1, \ldots, n \}$,
$ \sum_{i \in I} [(|I|-2)(n-2-|I|) -2]E_I -(n-1)K_M=0.$
\end{itemize}
\label{pic}
\end{proposition}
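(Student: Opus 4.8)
The plan is to read everything off Kapranov's realisation (Theorem \ref{kapranov}) of $f_i\colon\Mo{n}\to\P^{n-3}$ as an iterated blow-up of $\P^{n-3}$ along the strict transforms of the vital spaces $V^i_{I\setminus\{i\}}$ (with $i\in I$), performed in order of increasing dimension. First I would note that, since the $p_j$ are in linear general position and the centres are blown up in increasing dimension, every centre is smooth and meets the previously created exceptional divisors transversally, so $\operatorname{Pic}(\Mo{n})$ is freely generated by $\Psi_i=f_i^{*}\O_{\P^{n-3}}(1)$ together with the exceptional divisors, which are exactly the $E_I$ with $i\in I$ and $2\le|I|\le n-3$. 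The canonical bundle formula for an iterated blow-up of smooth centres gives $K_M=f_i^{*}K_{\P^{n-3}}+\sum(\cod-1)E_I$; as $K_{\P^{n-3}}=\O(2-n)$ and $V^i_{I\setminus\{i\}}$ has codimension $n-1-|I|$, this reads $K_M=(2-n)\Psi_i+\sum_{i\in I}(n-2-|I|)E_I$, which is (i) (the terms with $|I|=n-2$ carry coefficient $0$). For the generation statement I would not try to solve (i) for $\Psi_i$ over $\Z$; instead I use the hyperplane identity below. For $j,k\ne i$ the boundary divisor $E_{jk}=E_{(\{j,k\})^{*}}$ is, in the $f_i$-model, the strict transform of the hyperplane $\langle p_m:m\ne i,j,k\rangle$; being reduced it has multiplicity $1$ along every vital space it contains, so $E_{jk}=\Psi_i-\sum_{i\in I,\ j,k\notin I}E_I$, whence $\Psi_i=E_{jk}+\sum_{i\in I,\ j,k\notin I}E_I$ lies in the subgroup generated by boundary divisors. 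Thus the boundary divisors already generate, and a fortiori so do $K_M$ and the boundary divisors.

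For (ii) I would compute $\Psi_j$ inside the $f_i$-model. By Definition \ref{def:cre}, $\omega_j^{\K}=f_j\circ f_i^{-1}$ is the standard Cremona transformation of $\P^{n-3}$ centred on the $n-2=(n-3)+1$ points $\K\setminus\{p_j\}$, hence given by $|\O(n-3)|$. A direct computation on the standard Cremona shows that this system has multiplicity $(n-3)-|T|$ along the span of a subset $T$ of the centres, i.e. multiplicity $n-2-|I|$ along $V^i_{I\setminus\{i\}}$ for $I=T\cup\{i\}$ (so $i\in I,\ j\notin I$). Since $\Psi_j$ is base point free it is the strict transform of this system, and by the increasing-dimension bookkeeping of the first paragraph the strict-transform coefficients equal these multiplicities: $\Psi_j=(n-3)\Psi_i-\sum_{i\in I,\ j\notin I}(n-2-|I|)E_I$. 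Splitting the sum in (i) according to whether $j\in I$ and substituting this formula collapses the $\Psi_i$-terms to give $K_M=-\Psi_i-\Psi_j+\sum_{i,j\in I}(n-2-|I|)E_I$, which is (ii).

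Relation (iii) is not a formal consequence of (i) and (ii): summing (ii) over $j\ne i$ and eliminating the $\Psi_j$ via the Cremona formula and (i) collapses to a tautology, so genuinely new geometric input is needed, and that input is the hyperplane identity of the first paragraph. Here I would observe that in (iii) the summands with $|I|=n-2$ are precisely the $E_{jk}$ (with $j,k\ne i$), each carrying coefficient $(n-4)\cdot 0-2=-2$, and these are the only terms that are not basis elements. I then substitute $E_{jk}=\Psi_i-\sum_{i\in I,\ j,k\notin I}E_I$ and relation (i) for $K_M$ into (iii) and verify the identity coefficient-by-coefficient in the free basis: the $\Psi_i$-coefficients match as $-(n-1)(n-2)$, and for each $I$ with $2\le|I|\le n-3$ the required equality is the routine polynomial identity $(|I|-2)(n-2-|I|)-2+(n-|I|)(n-|I|-1)=(n-1)(n-2-|I|)$, which I would not grind out in full.

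The hard part will be the multiplicity computation for the standard Cremona in the second paragraph together with the bookkeeping forced by the identification $E_I=E_{I^{*}}$. Concretely, for each fixed viewpoint $i$ one must correctly sort the boundary divisors into the genuine exceptional divisors of $f_i$ (the $2\le|I|\le n-3$ ones) and the strict transforms of hyperplanes (the $|I|=n-2$ ones, i.e. the $E_{jk}$), and one must justify that the strict-transform coefficients coincide with the naive multiplicities computed in $\P^{n-3}$. This last point is exactly where the increasing-dimension order of the blow-ups and the linear general position of the $p_j$ are indispensable, and it is the step I would write out most carefully.
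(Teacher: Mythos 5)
Your proposal is correct, and for (i) and (ii) it coincides with the paper's proof: (i) is the canonical bundle formula for the iterated blow-up, and (ii) follows by substituting into (i) the Cremona relation $\Psi_j=(n-3)\Psi_i-\sum_{i\in I,\,j\notin I}(n-2-|I|)E_I$, which the paper records as its equation (2.1) citing Kapranov, and which you rederive from the multiplicities of the degree-$(n-3)$ standard Cremona system (a computation I checked: multiplicity $(n-3)-|T|$ along the span of $T$ is right). For (iii) the two arguments use the identical geometric input — the total transform of a vital hyperplane, $\Psi_i=E_{jk}+\sum_{i\in I,\,j,k\notin I,\,|I|\le n-3}E_I$, which is exactly the paper's formula $\Psi_i=\sum_{i\in I,\ h,j\notin I}E_I$ with the $|I|=n-2$ term made explicit — but organize the bookkeeping differently: the paper forms the differences $\Psi_i-\Psi_j=\sum_{i\in I,\,j\notin I}(n-2|I|)E_I$, sums over $j$, and recombines with the Cremona relation to reach $(n-1)(n-2)\Psi_i=\sum_{i\in I}(n-|I|)(n-|I|-1)E_I$, whereas you substitute the hyperplane identity and (i) directly into (iii) and check coefficients in the free basis $\{\Psi_i\}\cup\{E_I:\ i\in I,\ 2\le|I|\le n-3\}$. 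These are equivalent — summing your hyperplane identity over all pairs $\{j,k\}$ is precisely the paper's intermediate relation above, and both proofs close with the same polynomial identity $(|I|-2)(n-2-|I|)-2+(n-|I|)(n-|I|-1)=(n-1)(n-2-|I|)$, which I verified. Your route buys two things the paper leaves implicit: a clean explanation of \emph{why} (iii) is not formally implied by (i) and (ii) (the divisors $E_{jk}$ carry coefficient $0$ in (i) and (ii) but $-2$ in (iii), so an expression of $E_{jk}$ in the basis is genuinely needed — your tautology check for the naive elimination is correct), and an honest proof of the generation statement over $\mathbb{Z}$, since solving (i) for $\Psi_i$ would require inverting $2-n$; your $\Psi_i=E_{jk}+\sum E_I$ settles it. The one point you rightly flag as needing care — that strict-transform coefficients under the increasing-dimension blow-up equal the naive multiplicities along the vital spans — is asserted without proof in the paper as well, so your treatment is at least as complete as the original.
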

\begin{proof}
Point (i) is clear in view of the
above description of $\Mo{n}$ as a blow up
of $\P^{n-3}$ at all vital linear subspaces
spanned by a Kapranov set $K$.

To get (ii) note that by definition of
the Cremona transformation $\omega_j^\K$
we have $\omega_j^\K= f_j\circ f_i^{-1}$
and 
\begin{equation}\Psi_j= (n-3) \Psi_i - \sum_{i  \in I,
  j \notin I}(n-2-|I|)E_{I}.
\label{eq:213}
\end{equation}
These plugged in formula (i) give the
required equation.

Formula (iii) is more complicate.
First note that for every $i \ne j
\in \{1, \ldots, n \}$, we have:

\begin{equation} \qquad \Psi_i-\Psi_j =
  \sum_{i \in I, j \notin I
  }(n-2|I|)E_{I}.
\label{eq:1}
\end{equation}

To show this choose $h \ne i, j$ and
compute $\Psi_i$ and $\Psi_j$ via the pullback
of the hyperplanes $H^{i\vee}_{j,h}$ and
$H^{j\vee}_{i,h}$. This yields

$$\Psi_i= \sum_{i \in I, h,j \notin I}E_I,
\qquad \Psi_j= \sum_{j \in I, h,i \notin I}E_I. $$
The required equality is then obtained
summing over all $h\ne i,j$, and 
  passing to the complementary
indexes whenever either $i$ is not contained in $I$.

Next sum equation
(\ref{eq:1})  over all $j$ to get:
\begin{equation} \qquad (n-2) \sum_{j \neq i}(\Psi_i-\Psi_j) =
\sum_{i \in I}(n-|I|)(n-2|I|)E_I.\label{eq:2}
\end{equation}

Then equation (\ref{eq:213}) yields
$$\Psi_i -\Psi_j=-(n-4)\Psi_i+\sum_{i \in I, j \notin I}(n-2-|I|)E_{I}.$$
This summed over all $j$ gives:
$$ \sum_j(\Psi_i-\Psi_j)=-(n-4)(n-1)\Psi_i + \sum_{i \in I} \ (n-|I|)(n-|I|-2)E_I.$$
Multiply the latter by $(n-2)$ and  equate
 to equation (\ref{eq:2}). This together
 with the following identity 
$$(n-2)(n-|I|)(n-|I|-2)-(n-|I|)(n-2|I|)=(n-|I|)(n-4)(n-|I|-1)$$
gives:

\begin{equation} \qquad (n-1)(n-2)\psi_i= \sum_{i \in I} (n-|I|)(n-|I|-1)E_I.
\label{eq:3}
\end{equation}

To conclude multiply  formula in  (i) by
$(n-1)$ and equate with equation
(\ref{eq:3}) to get
the required
 $$(n-1)(n-|I|-2) -(n-|I|)(n-|I|-1)= (|I|-2)(n-|I|-2)-2.$$
\end{proof}


\section{Linear fibrations}
Recall from Definition \ref{def:fibr} that if $f: \Mo{n} \to X$ is
a fibration we denote by
 $F_f$ a general fiber of $f$.

\begin{definition}
A fibration $f$ is said a linear fibration if there exists
$i \in \{1, \ldots, n \}$ such that $f_i(F_f)$ is a linear space.
\label{def:linf}
\end{definition}
If $\pi: \P^{n-3} \to X$ is a rational fibration with general
fiber a linear space, and if there exists a Kapranov set $K \subset \P^{n-3}$ with
Kapranov map $f_i: \Mo{n} \to \P^{ n-3 }$ such that the composition $f= \pi \circ f_i$
is a morphism, then $f$ is a linear fibration. \par \noindent

In this section we will focus on linear fibrations and we will show that every linear fibration
on $\Mo{n}$ factors through a forgetful morphism. \par \noindent

Suppose that a forgetful morphism $$\psi= \phi_{I_1} \times \cdots \times \phi_{I_k}: \Mo{n} \to
\overline{M}_{0,n-|I_1|} \times \cdots \times \overline{M}_{0,n-|I_k|}$$ is a fibration and that
there exists $i \in {1, \ldots, n}$ such that $i \notin \cup_{j=1,\ldots,k} I_j$.
Then $\psi$ is a linear fibration.
In fact we have a commutative diagram:
\[
\xymatrix{
\overline{M}_{0,n}\ar[d]^{\psi}\ar[r]^{f_i}&
\P^{n-3} \ar@{.>}[d]^{\pi_{V^i_{I_1}}\times \cdots \times \pi_{{V^i_{I_k}}}}\\
\overline{M}_{0,n-|I_1|} \times \cdots \times \overline{M}_{0,n-|I_k|}\ar[r]^{f_i}&\P^{n-3-|I_1|} \times \cdots \times \P^{n-3-|I_k|}}
\]
and  by Proposition \ref{lem:fibers}
$f_i(F_{\psi})$ is a linear space.
In this situation, Castravet and Televev, in \cite[Theorem 3.1]{CT},
 give a necessary and sufficient condition
for such a $\psi$ to be surjective, which is what is needed in order
for $f$ to be a fibration. A first
obvious condition for the surjectivity
of $\psi$ is that there are no inclusions among any two sets $I_i$ and $I_j$
of the collection $I_1, \ldots, I_k$.  This assumption made, they prove:

\begin{proposition}
With the above hypotheses, the morphism $\psi$ is surjective if and only if
for any $ S \subset \{1, \ldots, k \}$,
$$(*) \qquad n-|\cap_{j \in S}I_j|-3 \geq \sum_{j \in S}(n-|I_j|-3)$$
and $n-3-\sum_{j=1}^k(n-|I_j|-3)=:h \geq 0$.
\label{ct}
\end{proposition}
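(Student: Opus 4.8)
The plan is to translate surjectivity of $\psi$ into a transversality statement for linear spaces spanned by points in general position, using Kapranov's picture. Since $\Mo{n}$ is proper, $\psi$ is surjective if and only if it is dominant; and since the Kapranov map $f_i$ on the source and the Kapranov maps on the factors of the target are all birational, the commutative square preceding the statement reduces dominance of $\psi$ to dominance of the product of linear projections $\Pi=\prod_{j=1}^k\pi_{V^i_{I_j}}\colon\P^{n-3}\rat\prod_{j=1}^k\P^{n-3-|I_j|}$. By Lemma \ref{lem:fibers} the general fibre $F_\psi$ is carried by $f_i$ onto $\bigcap_{j=1}^k\langle V^i_{I_j},x\rangle$, an intersection of $k$ linear spaces of dimension $|I_j|$ through a general point $x$. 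This intersection always contains $x$ and has dimension at least the expected value $h=(n-3)-\sum_j(n-3-|I_j|)$, so $\Pi$ is dominant precisely when this bound is attained for general $x$, i.e. when the spaces $\langle V^i_{I_j},x\rangle$ meet transversally; in particular $h\ge 0$ is forced.

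First I would dispose of necessity, which needs no genericity. If $\psi$ is surjective then so is $\mathrm{pr}_S\circ\psi=\prod_{j\in S}\phi_{I_j}$ for every nonempty $S\subseteq\{1,\dots,k\}$. Since $\cap_{j\in S}I_j\subseteq I_j$, each $\phi_{I_j}$ with $j\in S$ factors through $\phi_{\cap_{j\in S}I_j}$, so this product factors through $\Mo{n-|\cap_{j\in S}I_j|}$, whose dimension is $n-|\cap_{j\in S}I_j|-3$. Comparing with the dimension $\sum_{j\in S}(n-|I_j|-3)$ of the target yields exactly $(*)$, while the dimension bound of the first paragraph gives $h\ge 0$.

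For sufficiency I would pass to the dual description: $\bigcap_j\langle V^i_{I_j},x\rangle$ meets in the expected dimension if and only if the spaces $A_j$ of linear forms vanishing on $V^i_{I_j}$ and at $x$, each of dimension $n-3-|I_j|$, are in direct sum. Any linear relation among these forms is supported on some subset $S$, and for a configuration in general position the intersection $\bigcap_{j\in S}\langle V^i_{I_j},x\rangle$ reduces to the join of $x$ with the common vital space $V^i_{\cap_{j\in S}I_j}$, so that $\dim\sum_{j\in S}A_j=n-3-|\cap_{j\in S}I_j|$. The failure of independence over $S$ is then measured by $\sum_{j\in S}(n-3-|I_j|)-(n-3-|\cap_{j\in S}I_j|)$, which is nonpositive exactly when $(*)$ holds for $S$; hence $(*)$ for all $S$ forces the $A_j$ to be independent and $\Pi$ to be dominant. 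The hard part is precisely this last genericity input: one must show that for subspaces spanned by points in linear general position the dimension of $\bigcap_{j\in S}\langle V^i_{I_j},x\rangle$ is governed solely by the combinatorics of the $\cap_{j\in S}I_j$, with no accidental incidences beyond the common points. Equivalently, after projecting away from $x$, one needs that vital spaces spanned by a generic point configuration in $\P^{n-4}$ meet in expected codimension unless some $S$ violates $(*)$. I would establish this lemma by induction, restricting to a hyperplane $H^{i\vee}_{jk}$ and invoking Remark \ref{rem:ka} to lower $n$, this being the combinatorial heart of the Castravet--Tevelev argument.
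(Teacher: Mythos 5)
Your reduction of the statement to a question about linear projections is sound, and it is worth noting that the paper itself contains no proof of this proposition: it is quoted verbatim from Castravet--Tevelev \cite[Theorem 3.1]{CT}, with only the gloss at the beginning of Section 4 that $(*)$ is equivalent to the codimension condition $(+)$ and that $(+)$ gives maximal rank for the differential of $\prod_j\pi_{U_j}$. Your necessity argument is correct and complete: factoring $\prod_{j\in S}\phi_{I_j}$ through $\phi_{\cap_{j\in S}I_j}$ and comparing dimensions is exactly the argument the authors themselves run in the first paragraph of the proof of Theorem \ref{surj}, and $h\geq 0$ is forced by comparing source and target dimensions.

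The sufficiency half, however, has a genuine gap, and moreover its key genericity claim is misstated. You assert that for points in general position $\bigcap_{j\in S}\langle V^i_{I_j},x\rangle=\langle V^i_{\cap_{j\in S}I_j},x\rangle$, hence $\dim\sum_{j\in S}A_j=n-3-|\cap_{j\in S}I_j|$; but one always has $\dim\sum_{j\in S}A_j\leq\sum_{j\in S}(n-3-|I_j|)$, so your formula contradicts itself whenever $(*)$ is strict for $S$. Concretely, take $n=7$, $i=1$, $I_1=\{2,3,4\}$, $I_2=\{5,6,7\}$: the spans $\langle V^1_{I_j},x\rangle$ are two hyperplanes in $\P^4$, and their intersection is a plane, not the single point $\langle V^1_{\emptyset},x\rangle=\{x\}$ (here each $A_j$ is a line, so $\dim(A_1+A_2)=2$, not $n-3-|\cap I_j|=4$). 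What you actually need is the transversality statement that for a generic configuration the codimension of $\bigcap_{j\in S}\langle V^i_{I_j},x\rangle$ equals $\min\bigl(\sum_{j\in S}(n-3-|I_j|),\ n-3-|\cap_{j\in S}I_j|\bigr)$, i.e.\ that subsets violating $(*)$ are the \emph{only} obstructions to the $A_j$ being in direct sum. This is precisely the content of Castravet--Tevelev's theorem, and you leave it unproven: the closing sketch (induction by restricting to a vital hyperplane $H^{i\vee}_{jk}$, invoking Remark \ref{rem:ka}) does not work as stated, since the general point $x$ lies on no vital hyperplane, the spaces $V^i_{I_j}$ need not restrict to vital spaces of the induced Kapranov set $\K'$, and Remark \ref{rem:ka} concerns restriction of $M_\K$-linear systems rather than incidences of vital spans. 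So the hard direction of the proposition --- exactly the part the paper delegates to \cite{CT} --- remains a hole in your argument.
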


Sometimes it turns out to be useful to
factorize forgetful maps with simpler 
ones. This is our next task. To make
clearer what we mean let us start with a
simple example.
Consider the forgetful morphism
$$ \psi=\phi_{\{i_1,i_2\}} \times \phi_{\{i_2,i_3\}}: \overline{M}_{0,6} \to \overline{M}_{0,4} \times \overline{M}_{0,4},$$
with $1 \notin \{i_1,i_2,i_3 \}$: $f_i(F_{\psi})$
intersects lines $\langle p_{i_1}, p_{i_2} \rangle$
and $\langle p_{i_2}, p_{i_3} \rangle$ only at $p_{i_2}$,
because otherwise it would lie on the plane $\langle p_{i_1}, p_{i_2}, p_{i_3} \rangle$; notice that this plane
is contracted to a point by $\psi$. An equivalent way of seeing this fact is that
we have a factorisation $\psi=(\phi_{i_1} \times \phi_{i_3}) \circ \phi_{i_2}$, where $\phi_{i_1} \times \phi_{i_3}$
is birational.
We generalise this fact in the following Proposition.

\begin{proposition}
Suppose that $I_1, \ldots, I_k$ are subsets of $\{1,\ldots, n\}$, with
no inclusions among them, no one of them containing an index, say $1$, 
and satisfying condition $(*)$. Let $\psi= \prod_{j=1}^k \phi_{I_j}$; then
$f_1(F_f)$ intersects the vital spaces $V^1_{I_j}$ at their general point
if and only if for any $S \subseteq \{1, \ldots k \}$ we have:
$$ (**) \qquad n-|\cap_{j \in S}I_j|-3 > \sum_{j \in S}(n-|I_j|-3).$$
If condition $(**)$ is not satisfied, there exists a collection
$J_1, \ldots, J_l$ of subsets of $\{1,\ldots,n\}$, with
no inclusions among them, no one of them containing $1$, 
and satisfying condition $(**)$, such that for every $h=1,\ldots,l$
there exists $m_h \in \{1,\ldots, k\}$ such that $J_h \subset I_{m_h}$ and, if
$h=\prod_{j=1}^l \phi_{J_j}$, a factorisation $\psi=g \circ h$ is given,
with $g$ a birational morphism.
\label{**}
\end{proposition}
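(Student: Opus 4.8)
The plan is to translate everything into the projective geometry of the Kapranov map $f_1$ and to read off the two conditions as transversality statements about linear spaces. Set $\K=\{p_2,\dots,p_n\}\subset\P^{n-3}$, let $x=f_1(\text{pt})$ be a general point (so that the fibres are positive dimensional, i.e. $h\geq 1$), and write $W_j=\langle V^1_{I_j},x\rangle$. By Lemma \ref{lem:fibers} and the commutative diagram preceding Proposition \ref{ct}, $\Lambda:=f_1(F_\psi)=\bigcap_{j=1}^k W_j$, a linear space through $x$ of dimension $h$. Since $x\notin V^1_{I_m}$ and $V^1_{I_m}\subset W_m$, projection from $x$ identifies $\Lambda\cap V^1_{I_m}$ with $\pi_x(\Lambda)=\bigcap_j\pi_x(W_j)=\bigcap_j\bar V_{I_j}$, where $\bar V_{I_j}=\pi_x(V^1_{I_j})$ are the vital spaces of the projected general configuration in $\P^{n-4}$. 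Thus the whole statement becomes a question of general position of vital spaces, with no further reference to $x$: $\Lambda$ meets $V^1_{I_m}$ at a general point exactly when $\bigcap_j\bar V_{I_j}$ is not contained in any proper vital subspace of $\bar V_{I_m}$.

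The engine is the identity, for every $S\subseteq\{1,\dots,k\}$, that $\Lambda_S:=\bigcap_{j\in S}W_j$ has dimension $(n-3)-\sum_{j\in S}(n-|I_j|-3)$: indeed every subcollection again satisfies $(*)$ and has nonnegative $h$, so $\prod_{j\in S}\phi_{I_j}$ is surjective by Proposition \ref{ct}. Since $V^1_{\cap_SI}\subset V^1_{I_j}\subset W_j$ for all $j\in S$ and $x\in W_j$, one always has $\langle V^1_{\cap_SI},x\rangle\subseteq\Lambda_S$, and comparing dimensions shows this is an equality precisely when $(*)$ holds with equality on $S$. This yields the easy direction at once: if $(**)$ fails, fix $S$ with $|S|\geq 2$ and equality in $(*)$, put $J=\cap_{j\in S}I_j$; then $\Lambda\subseteq\Lambda_S=\langle V^1_J,x\rangle$, so for any $m\in S$ one gets $\Lambda\cap V^1_{I_m}\subseteq\langle V^1_J,x\rangle\cap V^1_{I_m}=V^1_J$ (the last equality because $x$ general forces the $x$-coefficient to vanish), and $V^1_J\subsetneq V^1_{I_m}$ is proper since the no-inclusion hypothesis gives $J\subsetneq I_m$. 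Hence $\Lambda$ meets $V^1_{I_m}$ only inside a proper vital subspace.

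For the converse I would argue by contraposition. Suppose $\Lambda\cap V^1_{I_m}\subseteq V^1_{I'}$ with $I'\subsetneq I_m$; enlarging $I'$ I may take $I'=I_m\setminus\{i\}$. Replacing $W_m$ by $W'=\langle V^1_{I'},x\rangle$ leaves $\Lambda$ unchanged, so the auxiliary collection $\mathcal{I}^\sharp=\{I_m\setminus i\}\cup\{I_j:j\neq m\}$ has fibre of dimension $h$, one more than the value $(n-3)-\sum_{\mathcal{I}^\sharp}(n-|K|-3)=h-1$ predicted by $(*)$; in other words $\prod_{\mathcal{I}^\sharp}\phi_K$ is not surjective. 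Invoking the necessity part of Proposition \ref{ct} produces a subset witnessing a strict failure of $(*)$ for $\mathcal{I}^\sharp$, which must contain $I_m\setminus i$, say indexed by $\{m\}\cup T$ with $T\neq\emptyset$. A direct arithmetic comparison of that strict failure with the valid inequality $(*)$ for $S=\{m\}\cup T$ in the original collection squeezes the two integers together and forces equality in $(*)$ on $S$, i.e. the failure of $(**)$ (here $|S|=1+|T|\geq 2$). The main obstacle is precisely at this point: $\mathcal{I}^\sharp$ need not be inclusion-free, since one may have $I_m\setminus i\subseteq I_j$, so before applying Proposition \ref{ct} I must either choose $i\in I_m\setminus I'$ avoiding such an inclusion or treat separately the sets $I_j\supseteq I_m\setminus i$; controlling this bookkeeping while preserving the excess-dimension count is the delicate step.

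Finally, for the factorisation when $(**)$ fails, I would induct on $k$. Given $S$ with $|S|\geq 2$ and equality in $(*)$, set $J=\cap_{j\in S}I_j$ and enlarge to $S'=\{j:I_j\supseteq J\}$; then $\cap_{S'}I=J$ and $(*)$ still holds with equality on $S'$, whence $g_{S'}=\prod_{j\in S'}\phi_{I_j\setminus J}\colon\Mo{n-|J|}\to\prod_{j\in S'}\Mo{n-|I_j|}$ is a surjective forgetful morphism with vanishing residual dimension, hence generically finite, and $\prod_{j\in S'}\phi_{I_j}=g_{S'}\circ\phi_J$. Replacing $\{I_j\}$ by $\{J\}\cup\{I_j:j\notin S'\}$ strictly decreases the number of sets, and one checks directly that the new collection has no inclusions (the possibility $I_j\subseteq J$ would contradict the original no-inclusion hypothesis, while $J\subseteq I_j$ is excluded by $j\notin S'$) and still satisfies $(*)$, because its instances of $(*)$ are exactly the original instances for the index sets $S'\cup T$. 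Iterating absorbs all the intermediate maps into a single birational $g$ and terminates at a collection $J_1,\dots,J_l$ satisfying $(**)$ with each $J_h\subset I_{m_h}$. The point needing care here is the degree-one claim for $g_{S'}$: a moduli reconstruction argument (recovering the forgotten configuration from its images when the dimensions match) is needed to upgrade generic finiteness to birationality.
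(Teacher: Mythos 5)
Your engine is the same dimension count the paper's own proof runs: surjectivity of every partial product $\psi_S$ via Proposition \ref{ct}, the identity $\dim\Lambda_S=(n-3)-\sum_{j\in S}(n-|I_j|-3)$, and the observation that $\langle V^1_{\cap_{j\in S}I_j},x\rangle\subseteq\Lambda_S$ is an equality exactly when $(*)$ is an equality on $S$ (the paper phrases this as: the general fibre of $\psi_S$ has the same dimension as, hence coincides with, that of $\phi_{\cap_{j\in S}I_j}$). Your easy direction is correct, and your terminal induction for the factorisation is in fact more complete than the paper's, which essentially asserts it. Two of the difficulties you flag there dissolve: the enlargement $S'=\{j:I_j\supseteq J\}$ satisfies $S'=S$ automatically, since each term $n-|I_j|-3\geq 1$ would make $\sum_{j\in S'}(n-|I_j|-3)>n-|J|-3$ if $S'\supsetneq S$, contradicting $(*)$; and birationality of $g$ needs no moduli reconstruction, because via $f_1$ each intermediate $g_{S'}$ is induced by a product of linear projections whose general fibre is an intersection of linear spans of expected dimension zero, hence a single reduced point. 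Likewise your squeeze arithmetic in the converse is sound, and the inclusion bookkeeping you worry about for $\mathcal{I}^\sharp$ closes in two lines: if $I_m\setminus\{i\}\subseteq I_j$ for some $j\neq m$, the no-inclusion hypothesis forces $I_m\cap I_j=I_m\setminus\{i\}$, and $(*)$ for $S=\{m,j\}$ reads $1\geq n-|I_j|-3\geq 1$, an equality, so $(**)$ already fails and there is nothing left to prove.

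The genuine gap sits upstream of all this: your reduction ``$\Lambda$ meets $V^1_{I_m}$ at a general point exactly when $\bigcap_j\bar V_{I_j}$ is not contained in any proper vital subspace of $\bar V_{I_m}$'' is asserted, never proved, and it is precisely the hard content of the proposition. The negation of ``intersects $V^1_{I_m}$ at its general point'' says only that the locus swept out in $V^1_{I_m}$ by $\Lambda_x\cap V^1_{I_m}$, as $x$ varies, is a proper closed subvariety; nothing in your argument shows this swept locus is linear, let alone vital --- a priori it could be, say, a hypersurface of $V^1_{I_m}$ cut by some resultant-type condition on the moving configuration $\pi_x(\K)$. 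Your contraposition therefore starts from a hypothesis ($\Lambda\cap V^1_{I_m}\subseteq V^1_{I'}$ with $I'\subsetneq I_m$, uniformly for general $x$) strictly stronger than the actual negation, and the $\mathcal{I}^\sharp$ excess-dimension argument, correct in itself, only establishes: vital containment implies failure of $(**)$. Closing the implication requires an additional idea, for instance identifying density of the swept locus with dominance of $\psi$ restricted to the boundary divisor $E_{I_m\cup\{1\}}$ (whose $f_1$-image is $V^1_{I_m}$) and applying the surjectivity criterion of Theorem \ref{surj} inductively there. In fairness, the paper is equally laconic at this exact spot --- it proves that equality in $(*)$ forces non-general intersection and simply declares the two conditions ``equivalent'' --- so your proposal reproduces the substance of the published argument, including its one unproved implication; but as a self-contained proof it is incomplete at that step, and you should not present the vital-containment dichotomy as a translation when it is the theorem.
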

\begin{proof}
If $I_1, \ldots, I_k$ are subsets of $\{1,\ldots, n\}$, with
no inclusions among them, no one of them containing an index, say $1$, 
and satisfying condition $(*)$, then  $\psi= \prod_{j=1}^k \phi_{I_j}$
is surjective and so is any partial projection $\psi_S:=\prod_{j \in S} \phi_{I_j}$.

Now, condition $(**)$ is not satisfied if and only if
there exists $S \subset \{ 1, \ldots, k \}$ such that
$$ n-|\cap_{j \in S}I_j|-3 = \sum_{j \in S}(n-|I_j|-3).$$

Since $|\cap_{j \in S}I_j|= n-3-\sum_{j \in S}(n-|I_j|-3)$ is the dimension of 
the general fiber of $\psi_S$, this condition is equivalent to a factorisation of $\psi_S$
through $\phi_{\cap_{j \in S}I_j}$ and this is equivalent
to the condition that $f_i(F_{\psi})$ does not intersect 
some vital space  $V^i_{I_j}$ at its general point.
\end{proof}

It is easy to show that condition $(**)$ on sets $I_1, \ldots, I_k$ as above
is equivalent to the condition that for every $S \subset \{1, \ldots, k \}$
the linear span $\langle V^1_{I_j} |j
\in S \rangle$ is as large as
possible. \par \noindent

 The linearity assumption, although
very restrictive, 
allows us to prove the best possible
result towards factorisation of fibrations on $\Mo{n}$. 

\begin{theorem}
Let
  $f:\overline{M}_{0,n}\to X$ be a linear fibration.
  Then there exist sets $I_1, \ldots, I_k \subset \{1, \ldots, n \}$ and a birational morphism
  $g: \prod_{j=1}^k \overline{M}_{g,n-|I_j|} \to X$ 
  such that $f= g \circ (\prod_{j=1}^k \phi_{I_j})$ factors through
  the forgetful morphism $\prod_{j=1}^k \phi_{I_j}$. 
\label{th:linear}
\end{theorem}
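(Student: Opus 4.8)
The plan is to reduce an arbitrary linear fibration to the combinatorial situation governed by the forgetful morphisms and conditions $(*)$ and $(**)$ already established. Fix a Kapranov map $f_i:\Mo{n}\to\P^{n-3}$ realizing the linearity, so that $f_i(F_f)$ is a linear subspace $\Lambda\subset\P^{n-3}$ of some dimension $d$. Since $f$ is a morphism on all of $\Mo{n}$, the induced rational fibration $\pi:\P^{n-3}\rat X$ must contract $\Lambda$ and its translates; the key point is that the base locus and fiber structure of $\pi$ are forced to be compatible with Kapranov's blow-up, i.e. the associated linear system must be an $M_\K$-linear system whose general fiber is linear. First I would argue that, because the fibers sweep out $\P^{n-3}$ and the linear system defining $\pi$ is base-point-free after pulling back via $f_i$, the general linear fiber $\Lambda$ must be spanned by, or meet maximally, a configuration of vital linear subspaces $V^i_{I_j}$. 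The essential geometric input is that a moving family of linear spaces of dimension $d$ on $\P^{n-3}$ whose pullback to $\Mo{n}$ is base-point-free is extremely rigid: such families are classified by projections from linear centers, and those centers are forced onto vital spaces by the requirement that the strict transform under Kapranov's iterated blow-up be base-point-free (Remark \ref{rem:ka} and Definition \ref{def:Mk}).

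Next I would identify the vital spaces explicitly. The projection $\pi$ with linear fibers $\Lambda$ corresponds to a sub-linear-system of $|\O_{\P^{n-3}}(1)|$, say the span of linear forms vanishing on a fixed linear center $C=\bigcap_{\text{fibers}}$, and the base-point-freeness on $\Mo{n}$ forces $C$ to be cut out by hyperplanes through vital subspaces. Concretely, I expect to show $\Lambda$ is the join of vital spaces $V^i_{I_1},\ldots,V^i_{I_k}$ for suitable index sets $I_1,\ldots,I_k\subset\{1,\ldots,n\}$ with $i\notin\bigcup_j I_j$ (after possibly applying a standard Cremona transformation $\omega_j^\K$ as in Definition \ref{def:cre} to move $i$ out of the sets, using Remark \ref{rem:kcre}). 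By Lemma \ref{lem:fibers}, the forgetful morphism $\psi=\prod_{j=1}^k\phi_{I_j}$ then has $f_i(F_\psi)$ a linear space of the same dimension as $\Lambda$, and I would check these two linear spaces coincide for the general fiber, so that $f$ and $\psi$ have the same general fiber.

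Once $f$ and $\psi$ share the same general fiber, the factorization is essentially formal. I would first pass from the sets $I_1,\ldots,I_k$ to a collection satisfying $(**)$: if $(**)$ fails, Proposition \ref{**} produces a refined collection $J_1,\ldots,J_l$ satisfying $(**)$ together with a birational morphism relating the two forgetful morphisms, so I may assume $(**)$ holds, hence $\psi$ is surjective by Proposition \ref{ct} and genuinely a fibration. Since $f$ and $\psi$ are fibrations with the same general fiber $F_f=F_\psi$, the morphism $f$ is constant on the general fiber of $\psi$, so by the rigidity of fibrations (a morphism contracting the general fiber of another fibration factors through it, using that $\prod_j\overline{M}_{0,n-|I_j|}$ is normal) there is a rational map $g$ with $f=g\circ\psi$; because $f$ is a morphism with connected fibers and $\psi$ is surjective with connected fibers of the same dimension, $g$ is a birational morphism onto $X$, giving exactly the statement.

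The main obstacle I anticipate is the second paragraph: proving that the linear center of the fibration $\pi$ must be assembled from vital subspaces, rather than an arbitrary linear subspace of $\P^{n-3}$. This is where the full force of Kapranov's description enters — one must show that base-point-freeness of the strict transform under the iterated blow-up of all vital spaces severely constrains which linear systems of hyperplanes can occur, forcing the base locus to be a vital cycle. I would expect to handle this by an induction on $n$ (or on $\dim X$), slicing by a hyperplane $H^{i\vee}_{jk}$ and invoking Remark \ref{rem:ka} to restrict the $M_\K$-linear system to a lower-dimensional Kapranov set, thereby reducing to fibrations on $\overline{M}_{0,n-1}$; the delicate bookkeeping is ensuring the induced fiber on the slice is still linear and that the recovered index sets on the hyperplane lift to index sets for the original $\Mo{n}$.
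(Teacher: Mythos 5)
Your proposal has the right overall shape (locate vital centers $V^1_{I_j}$ so that the general fiber is a fiber of $\prod_j\pi_{V^1_{I_j}}$, then descend), but it is missing the one idea that makes the paper's proof work, and at that exact spot it substitutes a claim that is false in the stated generality. You assert that a moving family of linear spaces on $\P^{n-3}$ whose pullback under $f_i$ is base point free is ``classified by projections from linear centers.'' That is not true for families of linear spaces on projective space in general (order-one congruences of lines, such as the secant lines of a rational normal curve, cover $\P^3$ with one line through a general point and are not fibers of any linear projection; inside this very paper, case (iv) of Proposition \ref{prop:curves} and Lemma \ref{lem:fibers} show that fibers of forgetful maps can be cones and conics, so base-point-freeness alone does not linearize anything). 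The paper's actual mechanism is a Picard-group computation on the general fiber: since $F_f$ is a complete intersection of members of the base point free system $\L=f^*\A$ and is simultaneously a blow-up of the linear space $L=f_1(F_f)$, adjunction gives $K_{\Mo{n}}|_{F_f}=-(k+1)\Psi_1|_{F_f}+\Delta$ with $\Delta=\sum_{1\in I}(k+1-|I|)E_I|_{F_f}$; comparing with Proposition \ref{pic}(i) restricted to $F_f$, and using that $\Psi_1|_{F_f}$ and the exceptional classes are independent in $Pic(F_f)$, forces the existence of sets $I_1,\ldots,I_l$ not containing $1$ with $E_{I_j\cup\{1\}}|_{F_f}=\Psi_1|_{F_f}$ (so each $V^1_{I_j}$ meets $L$ in codimension one) together with the numerical identity $n-3-k=\sum_j(n-3-|I_j|)$, which exhibits $L$ as a fiber of $\pi_{V^1_{I_1}}\times\cdots\times\pi_{V^1_{I_l}}$. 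Your alternative plan (induction on $n$, slicing by vital hyperplanes via Remark \ref{rem:ka}) is left unexecuted, and you yourself flag its key step as unresolved; as written, the proposal contains no argument that the centers are vital.

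The second gap is your claim that once $f$ and $\psi=\prod_j\phi_{I_j}$ share the same general fiber ``the factorization is essentially formal.'' The paper explicitly warns against this: if $\psi$ were known to be equidimensional one could indeed conclude at once, but it is not, and rigidity-type arguments only give a \emph{rational} map $g$ defined near points whose $\psi$-fibers are contracted by $f$; special fibers of $\psi$ of excess dimension could a priori obstruct $g$ from being a morphism. The paper closes this by descending line bundles rather than fibers: first on the blow-up $Y\to\P^{n-3}$ of the centers, where $\f:Y\to W=\prod_j\P^{n-3-|I_j|}$ has $\rk Pic(Y/W)=1$ and $\f$-numerical equals $\f$-linear equivalence, so $a^{-1}_*(\L_1)=\f^*\H_1$; then on $\Mo{n}$ itself, where $\L$ is nef and trivial on $F_\psi$, hence $\psi$-numerically trivial, and since $\psi$ has rational fibers $\psi$-numerical and $\psi$-linear equivalence coincide, giving $\L=\psi^*\H$ with $\H$ base point free, and $g$ is the (birational) morphism induced by $\H$. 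Your appeal to normality of $\prod_j\overline{M}_{0,n-|I_j|}$ does not substitute for this descent. The part of your plan that does align with the paper is the use of condition $(**)$: the paper derives it directly from the fact that $L$ lies in no vital hyperplane, rather than invoking Proposition \ref{**}, but that difference is cosmetic.
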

\begin{proof}
Fix a very ample linear system $\A\in Pic(X)$.
 Let $\L=f^*(\A)$ and $\L_1=f_{1*}(\L)$, an $M_K$ linear system on $\P^{n-3}$.
By hypothesis we may assume that
$f_1(F_f)=:L$ is a linear space of 
dimension $k$.  
Since $F_f$ is the complete intersection of elements of $\L$,
and since $F_f$ is a certain blow-up of $L$,
if we restrict the canonical bundle to $F_f$
we obtain by adjunction:
$$K_{\Mo{n}}|_{F_f}=-(k+1)\Psi_1|_{F_f}+\Delta,$$
for some effective and exceptional divisor $\Delta\in Pic(F_f)$.
More precisely, $\Delta= \sum_{1 \in I}(k+1-|I|)E_I|_{F_f}.$
On the other hand, by Proposition
\ref{pic} (i), restricted to $F_f$, we get
$$ (++) \qquad (n-3-k)\Psi_1|_{F_f}+\Delta= \sum_{1 \in I}(n-2-|I|)E_I|_{F_f}.$$

Since $\Psi_1$ and exceptional classes are independent in $Pic(F_f)$,
$(++)$ tells us that there are sets $I_1, \ldots, I_l$ not containing 
the index $1$ such that 
$$ E_{I_j \cup \{1\} }|_{F_f} = \Psi_1|_{F_f},$$
so that $V_{I_j}^1$ intersects $L$ in codimension one.
Moreover, for those sets, $(++)$ implies that

$$(n-3-k)= \sum_{j=1}^l(n-3-|I_j|).$$
This shows that the linear space
$L$ is the intersection of fibers
of projections with centers $V^1_{I_j}$,
for $j=1,\ldots,l$; in other words, $L$
is also a fiber of the rational map
$\pi_{V^1_{I_1}}\times \cdots \times \pi_{V^1_{I_l}}$.
Notice that if we knew that 
$\psi= \phi_{I_1}\times \cdots \times \phi_{I_l}$
was equidimensional, we could deduce
already from this that $f$ factors
through $\psi$. 

 Since $L$ does
not lie in a vital hyperplane,
sets $I_1, \ldots, I_l$ do satisfy
condition $(**)$. In particular,
for any pair of different indexes $h,m$, we
have $\langle
V^1_{I_h},V^1_{I_m}\rangle=\P^{n-3}$. Let us
stress the main consequence of this.
 Let
$\epsilon_1:Y_1\to \P^{n-3}$ be the blow
up of $V^1_{I_1}$ and $\M_j:=|\O(1)\otimes
I_{V_{I_j}^1}|$. Then we have
$$\epsilon_{1*}^{-1}
V^1_{I_j}=\Bl\epsilon_{1*}^{-1}\M_j.$$
Let $Y$ be the blow up of $\P^{n-3}$ along
the linear spaces
$\{V^1_{I_j}\}_{j\in\{1,\ldots l\}}$ in order 
of decreasing dimension, with
morphism $a:Y\to\P^{n-3}$. Let
$L_Y=a^{-1}_*(L)$ be the strict
transform of the general fiber $L$. Then
for any $j=1,\ldots, l$ we have
a morphisms $\f_j:Y\to \P^{n-3-|I_j|}$,
given by $a_*^{-1}\M_j$. Moreover
$L_Y$ is contained in a general fiber of each $\f_j$.
 This yields
a morphism onto the product
$$\f:Y\to W=\prod_{j=1,\ldots l}\P^{n-3-|I_j|}$$
with $\rk Pic(Y/W)=1$ and  general fiber
$L_Y$. The latter two, together with the fact that
$\f$-numerical and $\f$-linear
equivalence coincide, allow to conclude
that there exists $\H_1\in Pic(W)$ such that
$$a^{-1}_*(\L_1)=\f^*(\H_1).$$
If $\dim W=1$ then this is enough to
conclude.

 Assume that $\dim W>1$.
Let $\chi=\f\circ a^{-1}:\P^{n-3}\rat
W$ be the induced rational map.  Consider the
following commutative diagram induced by
the forgetful map $\psi$,

\[
\xymatrix{
\overline{M}_{0,n}\ar[d]^{f_1}\ar[rr]^{\hspace{-.5cm}\psi=\prod
\phi_{I_j}}
&&\prod_{j=1,\ldots,l} \Mo{n-|I_j|} \ar[d]^{f_{i_1} \times\cdots\times f_{i_l}} \\
\P^{n-3}\ar@{.>}[rr]^{\chi}&&
W.}
\]

 The linear system
$\L$ is base point free and numerically
trivial on $F_{\psi}$, the general fiber of
$\psi$. Therefore $\L$ is $\psi$-numerically
trivial. Moreover $\psi$ is a forgetful
morphism with rational fibers and therefore $\psi$-numerical and
$\psi$-linear equivalence coincide. This
shows that
$$\L=\psi^*(\H),$$ for some base point free
linear system $\H\in
Pic(\prod_{j=1,\ldots,l} \Mo{n-3-|I_j|})$,
and yields
that there is a birational morphism 
$$g:\prod_{j=1,\ldots,l} \Mo{n-3-|I_j|}\to X$$ 
induced by the linear system $\H$ such
that
$f=g\circ \psi$.
\end{proof}

\begin{remark} As soon as $n\geq 7$
  there are forgetful maps that are not
  linear and even with non
  rational fibers, see example
  \ref{ex:elliptic}.
\end{remark}

\section{Fibrations with low dimensional source}
\label{sec:num}
 
In this section we will be mainly interested in fibrations by curves.
\begin{definition}
A fibration $f: \overline {M}_{0,n} \to X$ is a fibration by curves if
$F_f$ is a connected curve.
\end{definition}
\begin{definition}\label{numbers}
Let $f$ be a fibration by curves and $\L$ the associated basepoint free linear system
on $\overline {M}_{0,n}$. Then $\L^{n-3}=0$ and for every
$I \subset\{1,\ldots,n\}$ we denote
$$m_I:=\L^{n-4} \cdot E_I\in \N$$
For every $i \in \{1, \ldots, n \}$, we  denote
$$C_i:=f_i(F_f).$$
Also, if $i \in \{1, \ldots, n \}$ and if $f_i: \overline {M}_{0,n} \to \P^{n-3}$
is a Kapranov map, we denote $$d_i:= \Psi_i \cdot \L^{n-4}.$$
\end{definition}
These numbers have the following interpretation: 
$C_i \subset \P^{n-3}$ is a a curve of degree $d_i$,
which intersects with some non negative multiplicity $m_I$ 
the general point of any vital space $f_i(E_I) \subset \P^{n-3}$.
An important point in this definition is the following: while
$m_I$ is defined on $\overline {M}_{0,n}$, it really says something
about the intersection of any $C_i$ with vital spaces in $\P^{n-3}$.
We summarize these observations in the following.

\begin{lemma}\label{switch}
Suppose that $f$ is a fibration by curves. 
Then, for any $i \in I$, $C_i$ intersects with multiplicity $m_I$ any vital
space $V^i_{I\setminus\{i\}}$. If $i \notin I$, $m_I$ is the multiplicity
of $C_i$ at the general point of $V^i_{I^*\setminus\{i\}}$.
Furthermore, if $i \in I, j \notin I$, $C_i$ intersects with muliplicity $m_I$ the general
point of $V^i_{I\setminus\{i\}}$ and $C_j=\omega_j^\K(C_i)$ intersects
with multiplicity $m_I$ the general point of $V^j_{I^*\setminus\{j\}}=\omega_j^\K(V^i_{I\setminus\{i\}})$
\end{lemma}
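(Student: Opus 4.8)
The plan is to reduce all three assertions to a single geometric interpretation of the number $m_I=\L^{n-4}\cdot E_I$ and then to transport it between Kapranov models by means of the standard Cremona transformation. First I would record the normalisation: since $f$ is a fibration by curves, $\dim X=n-4$ and $\L=f^{*}\A$ for a very ample $\A$, so the base point free $1$-cycle $\L^{n-4}$ is a positive multiple of the class $[F_f]$ of the general fibre; normalising $\L^{n-4}=[F_f]$ (which is exactly what makes $C_i$ a curve of degree $d_i$), the projection formula gives $f_{i*}\L^{n-4}=[C_i]$, so $d_i=\deg C_i$ and $m_I=[F_f]\cdot E_I$. Because $E_I=E_{I^{*}}$ one has at once $m_I=m_{I^{*}}$, a symmetry that will glue the two cases together.

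The core step is the case $i\in I$. By Theorem \ref{kapranov} the Kapranov map $f_i\colon\Mo{n}\to\P^{n-3}$ is an iterated blow-up, and by Definition \ref{kap} the boundary divisor $E_I$ is the divisor lying over the vital space $V:=V^i_{I\setminus\{i\}}$. I would work locally over a general point $x\in V$: centres of $f_i$ of dimension $<\dim V$ miss $x$, and the only modification affecting a neighbourhood of $x$ is the blow-up of $V$ itself. Since $F_f$ is the strict transform of the general curve $C_i$, the number $[F_f]\cdot E_I$ counts, with multiplicity, the points of $C_i\cap V$ lying at general points of $V$; this is exactly the intersection multiplicity of $C_i$ with $V$ at the general point of $V$, which is the first assertion. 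The second assertion is then formal: when $i\notin I$ we have $i\in I^{*}$, and applying the core step to the set $I^{*}$ shows that $m_I=m_{I^{*}}$ is the intersection multiplicity of $C_i$ with $V^i_{I^{*}\setminus\{i\}}$, i.e. the multiplicity of $C_i$ at the general point of that vital space.

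For the final (switch) assertion fix $i\in I$ and $j\notin I$. The first clause is just the core step for $f_i$. For $f_j$, Definition \ref{def:cre} gives $\omega_j^\K=f_j\circ f_i^{-1}$, whence $C_j=f_j(F_f)=\omega_j^\K(C_i)$ in the sense of strict transforms used there. Writing $T:=I\setminus\{i\}$, we have $j\in T^{*}$ (as $j\notin I$), so the second formula of Remark \ref{rem:kcre} applied with $h=j$ yields $\omega_j^\K(V^i_{T})=V^j_{(T\cup\{i\})^{*}\setminus\{j\}}=V^j_{I^{*}\setminus\{j\}}$. Finally, running the core step for the Kapranov map $f_j$ and the set $I^{*}$ (which contains $j$) shows that $C_j$ meets $V^j_{I^{*}\setminus\{j\}}$ with multiplicity $m_{I^{*}}=m_I$, as required.

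The one genuinely delicate point is the core step: one must be sure that the abstract intersection number $\L^{n-4}\cdot E_I$, computed on $\Mo{n}$, really equals the local intersection multiplicity of $C_i$ with $V$ at a general point of $V$, even though $f_i$ is an iterated rather than a single blow-up. The resolution I would give is that blow-ups of centres of dimension $<\dim V$ do not meet a general point of $V$, while the later blow-ups of larger vital spaces $V'\supsetneq V$ only modify $E_I$ along directions tangent to such $V'$, which a general fibre $F_f$ avoids; hence only the blow-up of $V$ itself contributes near $x$, and the identification holds at the general point of $V$. Everything else in the statement is then bookkeeping with the symmetry $m_I=m_{I^{*}}$ and Remark \ref{rem:kcre}.
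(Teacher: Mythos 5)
Your proof is correct and follows essentially the same route as the paper, whose proof is just the one-line citation ``It comes directly from Definitions \ref{kap} and \ref{numbers}, and Remark \ref{rem:kcre}'': you unpack precisely those three ingredients, namely $m_I=\L^{n-4}\cdot E_I$ with $E_I=f_i^{-1}(V^i_{I\setminus\{i\}})$ and $E_I=E_{I^*}$, the local analysis of the iterated blow-up at a general point of the vital space, and the Cremona bookkeeping $\omega_j^\K(V^i_{I\setminus\{i\}})=V^j_{I^*\setminus\{j\}}$. Your normalisation $\L^{n-4}=[F_f]$ is the same implicit convention the paper uses (compare the proof of Lemma \ref{lem:conti}, where $F_f$ is identified with $\L^{n-4}$), so nothing further is needed.
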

\begin{proof}
It comes directly from Definitions \ref{kap} and \ref{numbers}, and Remark\ref{rem:kcre}. 
\end{proof}
 
A corollary of Proposition \ref{pic} is the following:

\begin{lemma}\label{lem:conti}
Let $f$ be a fibration by curves with connected fibers.
Let $g$ be the arithmetic genus of $F_f$; the following identities hold:
\begin{itemize}
\item[(i)]for every $i  \in \{1, \ldots, n \}$, $(n-2)d_i= \sum_{i \in I}(n-2-|I|)m_I+2-2g;$
\item[(ii)] for every $i \ne j \in \{1, \ldots, n \}$, $d_i+d_j=\sum_{i,j \in I}(n-2-|I|)m_I+2-2g;$
\item[(iii)] for every $i  \in \{1, \ldots, n \}$,
$ \sum_{i \in I} [(|I|-2)(n-2-|I|) -2]m_I +(n-1)(2-2g)=0.$
\end{itemize}
\end{lemma}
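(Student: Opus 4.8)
The plan is to derive all three identities from the corresponding relations in Proposition \ref{pic} by the operation of intersecting with $\L^{n-4}$, using the fact that $\L^{n-3}=0$ and the definitions of the numbers $d_i$ and $m_I$. Concretely, I would take each relation among divisor classes in $\Pic(\Mo{n})$ and cap it with the $(n-4)$-fold self-intersection $\L^{n-4}$, which is a well-defined $1$-cycle class since $\dim \Mo{n}=n-3$. Under this operation $\Psi_i \cdot \L^{n-4}$ becomes $d_i$ by definition, and $E_I \cdot \L^{n-4}$ becomes $m_I$, so the linear combinations of $E_I$'s on the right-hand side translate verbatim into the stated sums $\sum_{i\in I}(\cdots)m_I$.

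The only term requiring genuine input is the canonical class $K_M$. I would compute $K_{\Mo{n}} \cdot \L^{n-4}$ by restricting to the general fiber $F_f$: since $\L$ is the pullback of a very ample system, $\L^{n-4}\cdot(\,\cdot\,)$ measures degrees along $F_f$ (up to the constant $\L^{n-4}\cdot F_f$, which the normalization of $\L$ makes equal to $1$ in the appropriate sense). By adjunction on the curve $F_f$, the degree of $K_{\Mo{n}}|_{F_f}$ is $\deg K_{F_f} - \deg N^\vee$, and since $F_f$ moves in a base-point-free linear system its normal bundle contribution vanishes numerically against $\L^{n-4}$; what survives is $\deg K_{F_f} = 2g-2$, where $g$ is the arithmetic genus of $F_f$. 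Hence $K_M \cdot \L^{n-4} = 2g-2$, which accounts for the term $2-2g$ in (i) and (ii) and for $(n-1)(2-2g)$ in (iii) after multiplying relation (iii) of Proposition \ref{pic} by $\L^{n-4}$.

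Carrying this out for each part: capping Proposition \ref{pic}(i) with $\L^{n-4}$ gives $2g-2 = (2-n)d_i + \sum_{i\in I}(n-2-|I|)m_I$, which rearranges to identity (i); capping (ii) gives $d_i+d_j = \sum_{i,j\in I}(n-2-|I|)m_I - (2g-2)$, i.e. identity (ii); and capping (iii), which involves only $E_I$ and $K_M$, gives identity (iii) directly. The genus $g$ here is the arithmetic genus because $F_f$ is a possibly singular fiber and adjunction on a Gorenstein curve computes $\deg \omega_{F_f} = 2p_a - 2$.

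The main obstacle I expect is justifying rigorously that $K_{\Mo{n}} \cdot \L^{n-4} = 2g-2$, that is, that the normal-bundle term in the adjunction computation contributes nothing. The clean way to see this is that a general $F_f$ is a complete intersection of general members $A_1,\dots,A_{n-4}$ of $\L$, so $\L^{n-4}$ is represented by $F_f$ itself, and $K_{\Mo{n}}\cdot \L^{n-4} = \deg\big(K_{\Mo{n}}|_{F_f}\big)$; then $\omega_{F_f} = \big(K_{\Mo{n}} + \sum A_i\big)\big|_{F_f}$ by adjunction, but each $A_i|_{F_f}$ is numerically trivial on $F_f$ since $\L\cdot F_f = \L\cdot \L^{n-4} = \L^{n-3}=0$, so $\deg \omega_{F_f} = \deg K_{\Mo{n}}|_{F_f}$, giving $2g-2 = K_{\Mo{n}}\cdot\L^{n-4}$ as needed. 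Once this equality is in hand, the three identities are purely formal consequences of Proposition \ref{pic} and the definitions.
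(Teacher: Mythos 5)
Your proposal is correct and follows essentially the same route as the paper: the paper likewise notes that $F_f$ is a complete intersection of $n-4$ general members of $\L$, applies adjunction to get $\deg\omega_{F_f}=(K_M+(n-4)\L)\cdot\L^{n-4}=2g-2$ (the $\L$-terms dying since $\L^{n-3}=0$), and then caps the relations of Proposition \ref{pic} with $\L^{n-4}$. The somewhat loose ``normal bundle contribution vanishes'' phrasing in your middle paragraph is fully repaired by the clean complete-intersection argument in your final paragraph, which is exactly the paper's.
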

\begin{proof}
The general fiber $F_f$ is a complete intersection in a smooth variety,
because it is $\L^{n-4}$ and $L$ is basepoint free;
hence it has a dualizing sheaf $\Cal \omega_{F_f}$, whose
degree is $2g-2$; by adjunction 
$$(K_M+(n-4)\L) \cdot \L^{n-4} = \Cal \omega_{F_f}.$$
Then it is enough to restrict the identities in Proposition \ref{pic} to $F_f$.
\end{proof}

\begin{lemma}\label{lem:restrizione}
Let $f: \Mo{n} \to X$ be a fibration by curves and let $I \subset \{1, \ldots, n \}$
be such that $|I|=n-3$.
 Suppose that $E_I$ intersects $F_f$. Then for every
$J \supset I$, with  $|J|=n-2$,  $f(E_J)$ has codimension at most $1$ in $X$.
In particular, if $m_I \ne 0$ and $m_J=0$, $f$ restricts to a fiber type morphism
of general relative dimension one on $E_J$.
\end{lemma}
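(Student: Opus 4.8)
The plan is to read everything off two dimension counts, using only that $f$ is a fibration by curves. Since $\dim\Mo{n}=n-3$ and the general fibre $F_f$ is a curve, the target satisfies $\dim X=n-4$. On the other hand $|J|=n-2$ forces $|J^{*}|=2$, so by the boundary description of Section~\ref{sec:notation} we have $E_J\cong\Mo{n-1}\times\Mo 3\cong\Mo{n-1}$, which is irreducible of dimension $n-4$. Thus $f|_{E_J}\colon E_J\to X$ is a morphism from an irreducible $(n-4)$-fold to an $(n-4)$-fold, and the whole statement becomes a question about the dimension of its image.

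First I would prove the codimension bound. Every fibre of the restricted morphism $f|_{E_J}$ is of the form $E_J\cap f^{-1}(x)$ for some $x\in X$, hence is contained in the fibre $f^{-1}(x)$, which is a curve. Therefore each fibre of $f|_{E_J}$ has dimension at most $1$, and the fibre-dimension theorem gives $\dim f(E_J)\ge\dim E_J-1=n-5$; equivalently, $f(E_J)$ has codimension at most $1$ in $X$. I would remark that this argument uses only $|J|=n-2$, not the relation $J\supset I$ nor the hypothesis on $E_I$; the latter serve to single out the divisors relevant in the applications and to make $m_I\neq 0$ the natural partner of the condition $m_J=0$ treated next.

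For the refinement I would translate the numerical hypotheses of Definition~\ref{numbers}. Since $\L$ pulls back an ample class from $X$ and $\dim X=n-4$, the class $\L^{n-4}$ is a positive multiple of the fibre class $[F_f]$, so $m_J=\L^{n-4}\cdot E_J=0$ forces $[F_f]\cdot E_J=0$. As $F_f$ is a general, hence moving, curve it is not contained in the fixed divisor $E_J$, and since $E_J$ is effective all local intersection multiplicities of $F_f$ with $E_J$ are non-negative; hence $[F_f]\cdot E_J=0$ is equivalent to $F_f\cap E_J=\varnothing$. In particular the general fibre of $f$ avoids $E_J$, so $f|_{E_J}$ cannot be dominant and $f(E_J)$ is a proper irreducible closed subvariety of $X$, of dimension at most $n-5$. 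Combined with the bound $\dim f(E_J)\ge n-5$ obtained above, this yields $\dim f(E_J)=n-5$, so the general fibre of $f|_{E_J}$ is exactly $1$-dimensional: $f$ restricts to a fibre type morphism of relative dimension one on $E_J$.

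The argument is essentially a dimension count, so I do not expect a deep obstacle; the one step that needs genuine care is the equivalence $m_J=0\Leftrightarrow F_f\cap E_J=\varnothing$. This rests on two points that must be justified rather than assumed: that the general fibre $F_f$ is not contained in the boundary divisor $E_J$, so that the intersection product computes an honest set-theoretic intersection, and that the intersection multiplicities of a moving curve with the effective divisor $E_J$ are non-negative, so that a vanishing total cannot mask a non-empty intersection. Once this is in place both conclusions follow immediately from the chain of inequalities $n-5\le\dim f(E_J)\le n-5$.
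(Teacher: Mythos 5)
There is a genuine gap in your first step. From ``each fibre of $f|_{E_J}$ is contained in a fibre of $f$'' you may only conclude containment in a fibre of $f$ over a point of $f(E_J)$, and precisely in the relevant case $f(E_J)\subsetneq X$ these are \emph{special} points of $X$, where fibres of $f$ need not be curves: fibre dimension jumps on closed subsets, and only the general fibre of a fibration by curves is one-dimensional. Your stronger claim, that the codimension bound needs neither $J\supset I$ nor the hypothesis that $E_I$ meets $F_f$, is in fact false. Take $n=6$ and $f=f_2\circ\phi_1\colon\Mo{6}\to\P^2$, where $f_2\colon\Mo{5}\to\P^2$ is a Kapranov map: this is a fibration by curves (its general fibre is a fibre of the universal curve $\phi_1$, and all fibres are connected). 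For $J=\{1,4,5,6\}$, so $J^*=\{2,3\}$, one has $\phi_1(E_J)=E_{2,3}\subset\Mo{5}$, and $f_2$ contracts $E_{2,3}$ to the point $p_3$; hence $f(E_J)=\{p_3\}$ has codimension $2$ in $\P^2$. Here $E_J$ lies inside the two-dimensional fibre $f^{-1}(p_3)=\phi_1^{-1}(E_{2,3})$ --- exactly the failure mode your dimension count ignores. (This does not contradict the Lemma: no $E_I$ with $|I|=3$, $I\subset J$, meets the general fibre of this $f$, since that fibre meets only the divisors $E_{1,j}$.)

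The paper's proof uses the discarded hypotheses in an essential way, and this is the missing idea: since $E_I$ meets the general fibre, $f(E_I)=X$; writing $E_I\iso\Mo{4}\times\Mo{n-2}$, the intersection $E_I\cap E_J$ is one fibre $E_x$ of the projection $E_I\to\Mo{4}\iso\P^1$, i.e.\ a member of a one-parameter family of divisors covering the dominating variety $E_I$. Considering the image of $(\mathrm{pr},f|_{E_I})\colon E_I\to\P^1\times X$, which is irreducible of dimension $\geq n-4$ and surjects onto $\P^1$, every fibre of its projection to $\P^1$ --- including the one over the special boundary point $x$ --- has dimension $\geq n-5$; by properness that fibre is exactly $f(E_x)$, and $E_x\subset E_J$ gives $\cod f(E_J)\leq 1$. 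Note this bounds images of members of a covering family of a divisor that dominates $X$, and so is immune to the jumping of fibres of $f$ itself. Your second half ($m_J=0$ iff $F_f\cap E_J=\emptyset$, which together with the codimension bound forces relative dimension one on $E_J$) agrees with the paper's argument and is sound, including the two points of care you flag; so the proof is repaired precisely by routing the upper bound on the codimension through $E_I\cap E_J$ as above, rather than through a fibre-dimension estimate for $f$.
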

\begin{proof}
Let us consider the divisor $E_I\iso
\Mo{4}\times\Mo{n-2}$. Let $E_x$ be a
fiber of the canonical projection onto
the first factor. In this notation we have $E_I\cap
E_J=E_x$. By hypothesis we have
$f(E_I)=X$ hence $f(E_x)$ is a divisor
and consequently $\cod f(E_J)\leq 1$.

Assume that $m_I \ne 0$ and $m_J =0$.
Then $E_I$ intersects the general fiber
of $f$ while  $f(E_J)$ cannot dominate
$X$. Therefore $\cod f(E_J)=1$ and  $f$ restricts
to a morphism of fiber type of relative dimension one on $E_J$.
\end{proof}

Notice that in Lemma \ref{lem:restrizione} the restricted
morphism $f_{|E_J}$ fails to be a fibration by curves iff
it has non connected fibers.

We will use the following:
\begin{lemma}\label{lem:zero}
A fibration by curves $f$ factors through the forgetful map $\phi_i$
for some $i \in \{1, \ldots,n \}$ if and only if 
 $n=5$ or $n \ge 6$ and $m_I=0$ for all $I$ such that
$3 \leq |I| \leq n-3$.
\end{lemma}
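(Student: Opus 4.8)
The statement to prove is Lemma \ref{lem:zero}: a fibration by curves $f$ factors through some forgetful map $\phi_i$ if and only if either $n=5$, or $n\ge 6$ and $m_I=0$ for every $I$ with $3\le |I|\le n-3$. The plan is to treat the two directions separately, using the numerical machinery of Lemma \ref{switch} and Lemma \ref{lem:conti} together with Kapranov's geometric description. The key translation is this: by Lemma \ref{lem:fibers}, $f$ factors through $\phi_i$ precisely when the general fiber $F_f$ is contracted by $\phi_i$, equivalently when $C_i=f_i(F_f)$ is a \emph{line} in $\P^{n-3}$ (the fiber of the projection $\pi_i$ from a point is linear, and the curve $C_i$ must map to a point under $\pi_i$). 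So the whole lemma should reduce to the assertion that $C_i$ is a line for some $i$ if and only if the stated vanishing holds.

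\textbf{The forward direction.}
First I would assume $f$ factors through $\phi_i$, so $C_i$ is a line. The case $n=5$ is degenerate: here $\Mo{5}$ is a surface, fibers are curves, and $\P^{n-3}=\P^2$, so any $C_i$ passing appropriately through the Kapranov set is automatically a line; the vanishing condition on $m_I$ for $3\le |I|\le n-3=2$ is vacuous, so nothing to check. For $n\ge 6$ with $C_i$ a line, I would use Lemma \ref{switch}: the number $m_I$ (for $i\in I$) is the intersection multiplicity of $C_i$ with the vital space $V^i_{I\setminus\{i\}}$, and (for $i\notin I$) the multiplicity of $C_i$ at the general point of $V^i_{I^*\setminus\{i\}}$. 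A line in general position meets a vital space of the relevant dimension in at most a point, and more to the point has no multiplicity at a general point of a positive-dimensional vital space; a careful count using that $\dim V^i_{I\setminus\{i\}}=|I|-2$ shows that for $3\le |I|\le n-3$ the corresponding vital spaces have dimension and codimension both at least one, forcing $m_I=0$. The cleanest argument is to apply the standard Cremona symmetry $C_j=\omega_j^\K(C_i)$: a line stays a line under no Cremona, but the degrees $d_i$ are controlled, and one reads off $m_I=0$ from the fact that $C_i$ passes through the Kapranov points only in the prescribed minimal way.

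\textbf{The converse and the main obstacle.}
For the converse, assume $n\ge 6$ and $m_I=0$ for all $I$ with $3\le |I|\le n-3$. I want to produce an index $i$ with $C_i$ a line. The surviving nonzero multiplicities are only $m_I$ for $|I|=2$ (intersections with Kapranov points, after passing to complements) and $|I|=n-2$. Plugging this vanishing into the genus and degree identities of Lemma \ref{lem:conti}(i),(ii) should pin down the $d_i$ and the genus $g$: I expect to deduce $g=0$ and that for a suitable $i$ the degree $d_i$ forces $C_i$ to be rational of low degree, and the constraint that $C_i$ meets only the point-vital-spaces (and the top-dimensional ones) with the surviving multiplicities should force $d_i=1$, i.e. $C_i$ is a line. \textbf{The main obstacle} is the converse: vanishing of the $m_I$ is an intersection-theoretic statement on $\Mo{n}$, whereas ``$C_i$ is a line'' is a statement about the specific $i$, and a priori the surviving $m_I$ with $|I|=2$ and $|I|=n-2$ could assemble into a higher-degree rational curve meeting the Kapranov set transversally rather than a line. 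I expect to resolve this by exploiting the Cremona action: the relations force the curve to be Cremona-equivalent to a line through some $p_i$, and by Lemma \ref{lem:fibers} a line through $p_i$ in one Kapranov model corresponds exactly to a fiber of a single forgetful map $\phi_i$, giving the desired factorization. The bookkeeping of which $m_I$ survive, and matching it against the three identities of Lemma \ref{lem:conti}, is where the real work lies.
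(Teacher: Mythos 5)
Your overall strategy matches the paper's --- reduce to the statement that some Kapranov image of the general fiber is a line through a point of the Kapranov set, using Lemma \ref{switch} and the identities of Lemma \ref{lem:conti} --- but there are two genuine problems. First, your ``key translation'' is wrong as stated: if $f$ factors through $\phi_i$, then $C_i=f_i(F_f)$ is \emph{never} a line; by Lemma \ref{lem:fibers} itself (the case $j\in I$) it is a rational normal curve of degree $n-3$ through the whole Kapranov set. What is true, and what the paper uses, is that for every $j\neq i$ the image $C_j=f_j(F_f)$ is a line through the point $p_i$, because $\phi_i$ is realized in the $f_j$-model as the linear projection from $p_i$; in the $f_i$-model there is no point $p_i$ at all, so your picture of ``$C_i$ contracted by the projection $\pi_i$ from a point'' conflates the two models. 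With the indices corrected, your forward direction goes through: a general line through $p_i$ misses all vital spaces of intermediate dimension, so $m_I=0$ for $3\le|I|\le n-3$.

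Second, and more seriously, the converse --- the actual content of the lemma --- is only announced in your proposal (``I expect to deduce'', ``should force'', ``where the real work lies''), and the work you defer is precisely the proof. The paper carries it out as follows: with $m_I=0$ in the intermediate range, both surviving coefficients in Lemma \ref{lem:conti}(iii) (for $|I|=2$ and, via $E_I=E_{I^*}$, for $|I|=n-2$) equal $-2$, so (iii) collapses to $\sum_{|I|=2}m_I=(n-1)(1-g)$, forcing $g=0$ and total weight $n-1$; then (i) reads $(n-2)d_j=(n-4)\sigma_j+2$ with $\sigma_j:=\sum_{j\in I,\,|I|=2}m_I$, and since $\sum_j\sigma_j=2(n-1)<2n$ while $\sigma_j=0$ would give $(n-2)d_j=2$, impossible for $n\ge 5$, there exists an index $j$ with $\sigma_j=1$, whence $d_j=1$ by integrality: $C_j$ is a line through the unique $p_i$ with $m_{\{i,j\}}=1$. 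Note that no Cremona action is used or needed here --- your proposed resolution (``the relations force the curve to be Cremona-equivalent to a line'') is a detour that would still require exactly this counting argument to rule out your own worry about a higher-degree curve meeting the Kapranov set transversally. Finally, the concluding factorization does not follow from Lemma \ref{lem:fibers}, which only computes fibers of forgetful maps and gives no converse; the paper closes by invoking Theorem \ref{th:linear}, the linear-fibration theorem, once it knows the general fiber maps to a line through $p_i$.
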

\begin{proof}
If $f$ factors through $\phi_i$, for every 
$j \ne i$, $f_j(F_f)$ is a line through
the point $p_i$ and then $m_I=0$ for all $I$ such that
$3 \leq |I| \leq n-3$.

\noindent Suppose conversely that $n =5$ or that 
$n \ge 6$ and $m_I=0$ for all $I$ such that
$3 \leq |I| \leq n-3$. 

From Lemma \ref{lem:conti}(iii),  
$ \sum_{|I|=2}m_I =(n-1)(1-g),$ hence $g=0$ and
for each $j \in \{1, \ldots,n \}$, from Lemma \ref{lem:conti} (i): 
$$(n-2)d_j= \sum_{j \in I,
  |I|=2}(n-4)m_I+2.$$ On the other hand,
$\sum_{j \in I, |I|=2}m_I \le
\sum_{|I|=2}m_I =(n-1)$, so that we have
an upper bound $$d_j \leq n-3$$ for all
$j \in \{1, \ldots,n \}$.  Since $f$ has
connected fibers and $f_j(F_f)$ is a
rational curve, this implies that for
each $I \subset \{1, \ldots,n \}$, $m_I
\leq 1$.  Hence we may assume that
there exists $i \in \{1, \ldots,n \}$
such that $m_{ \{i,j \}}=1= \sum_{j \in
  I, |I|=2}m_I$, so that $f_j(F_f)$ is a
line through $p_i$. Then $f$ factors
through $\phi_i$ by Theorem \ref{th:linear}
\end{proof}

Suppose next that $f$ is a fibration by curves such that
the restriction to some divisor $E_I$ is still a morphism
of fiber type with relative dimension one.
We need a way to relate numerical characters of $f$ and numerical characters
of $f|_{E_I}$.
\begin{lemma}\label{lem:restnum}
Suppose that $f$ is a fibration by curves and suppose that there exists $I \subset \{1, \ldots, n \}$
with $|I|=n-2$ such that $g:=f|_{E_I}$ is a fiber type morphism of relative dimension one. 
Let $i \in I$, so that in the identification $E_I =\Mo{n-1}$ the restriction $f_i|_{E_I}$ is a Kapranov map to 
the hyperplane $H^
{i\vee}_{I^*}=V^i_{I\setminus\{i\}} \subset \P^{n-3}$. Then $f_i(F_g)$ is the union of components of 
a fiber of $f$ and ${\rm deg}f_i(F_g) \leq \sum_{J \subset I}m_J$. Equality holds if and only if $F_g$ is a fiber of $f$. Moreover, if $m^{'}_J$ denotes
the intersection with $E_J \cap E_I$ of the general fiber of $F_g$,
we have $m^{'}_J \leq m_J$.
\end{lemma}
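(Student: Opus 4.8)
The plan is to extract everything from two facts: that $F_g$ is contracted by $f$, and that for $i\in I$ the tautological class $\Psi_i$ restricts to $E_I\cong\Mo{n-1}$ in a controlled way.

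First I would settle that $f_i(F_g)$ is a union of components of a fibre of $f$. Since $F_g$ is a fibre of $g=f|_{E_I}$, it is contracted by $f$ to a single point $y$, so $F_g\subseteq F:=f^{-1}(y)$; and because $g$ has relative dimension one, $f(E_I)$ is a divisor in $X$, so for general such $y$ the fibre $F$ is a curve. As $F_g$ is a curve contained in the curve $F$, it is a union of irreducible components of $F$, and applying the birational morphism $f_i$ shows that $f_i(F_g)$ is a union of components of $f_i(F)$.

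Next comes the degree. The key point is that for $i\in I$ the restriction of the tautological class agrees with the intrinsic one, $\Psi_i|_{E_I}=\Psi_i^{E_I}$ (the index $i$ is not the node), so that $\deg f_i(F_g)=\Psi_i^{E_I}\cdot F_g=\Psi_i\cdot F_g$ computed on $\Mo{n}$ by the projection formula. I would then invoke the relation $\Psi_i=\sum_{i\in J\subseteq I}E_J$, which is precisely the pull-back of the hyperplane $f_i(E_I)=H^{i\vee}_{I^*}$ and is legitimate here because $|I^*|=2$ (this is exactly the computation made in the proof of Proposition \ref{pic}). Since $[F]=[F_f]$ by conservation of number, the full fibre satisfies $\deg f_i(F)=\Psi_i\cdot F=\sum_{i\in J\subseteq I}m_J=d_i$, with contracted components of $F$ contributing zero to both sides so that no correction is needed; this sum is the quantity $\sum_{J\subset I}m_J$ of the statement. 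As $f_i(F_g)\subseteq f_i(F)$, taking degrees yields $\deg f_i(F_g)\le\sum_{J\subset I}m_J$.

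For the equality clause, $\deg f_i(F_g)=\sum_{J\subset I}m_J=\deg f_i(F)$ forces $f_i(F_g)=f_i(F)$ as cycles; but $f_i(F_g)$ lies in the hyperplane $H^{i\vee}_{I^*}=f_i(E_I)$, hence so does $f_i(F)$, which by birationality of $f_i$ means $F\subseteq E_I$ and so $F=F_g$, i.e. $F_g$ is a fibre of $f$; the converse is immediate. For the last assertion $m'_J\le m_J$ I would write $m'_J=(E_J\cap E_I)\cdot F_g=E_J\cdot F_g$ by the projection formula and compare with $E_J\cdot F=E_J\cdot F_f=m_J$, the complementary curve $F-F_g$ meeting the effective divisor $E_J$ non-negatively (this is also consistent with the identity $\deg f_i(F_g)=\sum_{i\in J\subseteq I}m'_J$ obtained by restricting the same formula to $F_g$).

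The main obstacle I expect is the conservation-of-number step: justifying $[F]=[F_f]$ for the special fibre over a general point of the divisor $f(E_I)$, which rests on equidimensionality (hence flatness) of $f$ there, and controlling the sign of the complementary contribution $E_J\cdot(F-F_g)$ when a component of $F-F_g$ happens to lie inside $E_J$. The two identities $\Psi_i|_{E_I}=\Psi_i^{E_I}$ and $\Psi_i=\sum_{i\in J\subseteq I}E_J$ are the remaining technical ingredients, but both follow directly from Kapranov's description and the computation already recorded in Proposition \ref{pic}.
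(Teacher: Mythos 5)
Your proposal is correct and follows essentially the same route as the paper's own proof: the paper likewise gets that $F_g=\L^{n-5}\cdot E_I$ is a union of components of a fibre curve from the fact that $f(E_I)$ is a divisor in $X$, obtains $\deg f_i(F_g)\le d_i=m_I+\sum_{J\subset I}m_J=\sum_{J\subset I}m_J$ using $m_I=0$ (your explicit $\Psi_i=\sum_{i\in J\subseteq I}E_J$ plus conservation-of-number computation is left implicit there), and derives the last assertion exactly as you do, in the intersection-theoretic form $m'_J=\L^{n-5}\cdot E_I\cdot E_J\le \L^{n-4}\cdot E_J=m_J$ via the inclusion of algebraic classes $\L^{n-5}\cdot E_I\subset\L^{n-4}$. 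The two delicate points you flag at the end --- equidimensionality/flatness of $f$ over a general point of the divisor $f(E_I)$, and the sign of $E_J\cdot(F-F_g)$ when a residual component lies inside $E_J$ --- are passed over in silence by the paper as well, so they are not gaps relative to the paper's own level of rigor.
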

\begin{proof}
The hypothesis means that the restriction of $\L$ to $E_I$ is a fiber type
morphism whose general fiber has dimension one.
This means that $\L^{n-5} \cdot E_I=F_g$ is a curve in $E_I$. Via the Kapranov
map $f_i$ (see definition \ref{def:cre}) $f_i(F_g)$ is clearly contained in a fiber of $f$.
Since $f(E_I)$ is a divisor in $X$, the general fiber $F_g$ is contained in a curve
so that it is in fact union of  components of such a curve. It then follows that,
since $f_i(E_I)=H^
{i\vee}_{I^*}= V^i_{I\setminus\{i\}}$ is a hyperplane in $\P^{n-3}$, 
$${\rm deg}f_i(F_g) \leq d_i=m_I +\sum_{J \subset I}m_J = \sum_{J \subset I}m_J,$$
because $m_I=0$. It is evident that equality holds if and only if ${\rm deg}f_i(F_g) = d_i$.
The final part of the statement comes from the fact that, as algebraic classes
in $\Mo{n}$, we have an inclusion $\L^{n-5} \cdot E_I \subset \L^{n-4}$,
so that for any $J$, we have
$$ m^{'}_J= \L^{n-5} \cdot E_I \cdot E_J \leq \L^{n-4} \cdot E_J.$$
\end{proof}
\begin{definition}\label{def:relev}
In the situation of Lemma \ref{lem:restnum}, we say that
a vital subspace of a hyperplane $f_i(E_I)$, is relevant if it
is the span of Kapranov points both for $\P^{n-3}$ and $f_i(E_I)$.
\end{definition}
Notice that formula ${\rm deg}f_i(F_g) \leq \sum_{J \subset I}m_J$
of Lemma \ref{lem:restnum} only contains intersection numbers
with relevant vital subspaces of $f_i(E_I)$. \par \noindent

We are able to classify fibrations by curves for low $n$.
The techniques we are going to use are the following: first of all,
if $n$ is small, formulae from Lemma \ref{lem:conti} will give us upper bounds
for the genus and the degree of projective Kapranov embeddings of fibers
of a fibration $f$. If needed we will look and will find hyperplanes
to which $f$ restricts to a fiber type morphism of relative dimension one.
Main techniques here will be Lemmata \ref{switch}, \ref{lem:restrizione}
and \ref{lem:restnum}.
\begin{proposition} \label{prop:curves}
Let $n \leq 7$, let $\{1, \ldots, n \}= \{i_1, \ldots, i_n \}$ and let
$f$ be a fibration by curves. Then there exists a factorisation
$f=g \circ h$ where $g$ is a birational morphism and $h$ is one of the following:
\begin{itemize}
\item[(i)] $h=\phi_{i_j}$ for some $j \in \{1,\ldots,n\}$,
\item[(ii)] $h= \phi_{\{i_1,i_2\}} \times \phi_{\{i_3,i_4\}}$ and $n=6$,
\item[(iii)] $h= \phi_{\{i_1,i_2\}} \times \phi_{\{i_3,i_4,i_5\}}$ and $n=7$,
\item[(iv)] $h= \phi_{\{i_1, i_2, i_3\}} \times \phi_{\{i_4,i_5,i_6\}} \times \phi_{\{i_1,i_4, i_7\}}$ and $n=7$.
\end{itemize}
Moreover if $n<7$, $f$ is a fibration by lines.
\end{proposition}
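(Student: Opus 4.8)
The plan is to argue by induction on $n$, controlling the arithmetic genus $g$ and the degrees $d_i$ of the fibres through the numerical identities of Lemma \ref{lem:conti}, and then using the restriction Lemmata \ref{lem:restrizione} and \ref{lem:restnum} to pass from $\Mo{n}$ to a boundary divisor $E_J \cong \Mo{n-1}$, where the classification for the smaller index is already available. First I would dispose of the low range: for $n=4$ the source is $\P^1$ and the statement is empty, and for $n=5$ Lemma \ref{lem:zero} places us in case (i), the general fibre being a line since $f$ factors through a single $\phi_{i_j}$. For every $n$, Lemma \ref{lem:zero} reduces the problem to the situation in which $m_I \neq 0$ for at least one $I$ with $3 \leq |I| \leq n-3$; otherwise case (i) holds and $F_f$ is a line. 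The real work is therefore to pin down the configuration of nonzero multiplicities $m_I$ once such an $I$ is present.

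The key numerical input is Lemma \ref{lem:conti}(iii). For $n \leq 7$ the coefficients $(|I|-2)(n-2-|I|)-2$ attached to $m_I$ are nonpositive, which bounds $g$. For $n=6$ it forces $g=0$; Lemma \ref{lem:conti}(i) then bounds the $d_i$ so tightly that, passing to the Kapranov model based at an index outside $\{i_1,i_2,i_3,i_4\}$, the curve $C_i$ is a line meeting the two vital lines $V^i_{\{i_1,i_2\}}$ and $V^i_{\{i_3,i_4\}}$. Hence $f$ is a linear fibration, and Theorem \ref{th:linear} produces the factorization $\phi_{\{i_1,i_2\}}\times\phi_{\{i_3,i_4\}}$ of case (ii), with $F_f$ a line as claimed. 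For $n=7$ the coefficients attached to $|I|=3$ and $|I|=4$ vanish, so (iii) collapses to $\sum_{i\in I,\,|I|\in\{2,5\}} m_I = 6(1-g)$, giving $g\leq 1$. I would exclude $g=1$ by restricting, via Lemma \ref{lem:restrizione}, to a boundary divisor $E_J\cong\Mo{6}$: there the already-established $n=6$ case forces rational (line) fibres, which by Lemma \ref{lem:restnum} contradicts a genus-one $F_f$ on the components it cuts out.

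With $g=0$ secured for $n=7$, I would run the inductive step in earnest. Choosing $I$ with $m_I\neq 0$ and $|I|=4$, and a $J\supset I$ with $|J|=5$ and $m_J=0$, Lemma \ref{lem:restrizione} makes $f|_{E_J}$ a fiber type morphism of relative dimension one on $E_J\cong\Mo{6}$. After dealing with connectivity (Stein factorizing where necessary, the finite part being harmless for the intermediate $\Mo{6}$ step), I would read off its forgetful factorization from the $n=6$ case and then use Lemma \ref{lem:restnum} to translate the restricted multiplicities $m'_J$ back into constraints on the $m_I$ of $f$. Matching these constraints across several boundary divisors determines the global configuration: if some index of $\{1,\dots,7\}$ is not forgotten, the relevant $C_i$ is again a line, $f$ is linear, and Theorem \ref{th:linear} yields case (iii), a fibration by lines; if instead every index is forgotten, then no Kapranov projection sees $C_i$ as a line, and one recognizes the triple product $\phi_{\{i_1,i_2,i_3\}}\times\phi_{\{i_4,i_5,i_6\}}\times\phi_{\{i_1,i_4,i_7\}}$ of case (iv).

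The main obstacle is precisely this last bookkeeping for $n=7$: assembling the local data coming from several boundary restrictions into a single consistent assignment of the $m_I$, checking that no other combinatorial pattern of nonzero multiplicities survives the constraints $(*)$ and the identities of Lemma \ref{lem:conti}, and recognizing that the surviving non-linear configuration is exactly the rational (non-straightenable) fibre underlying the triple forgetful morphism of case (iv). Surjectivity of that morphism, and hence that its fibres are genuinely curves, would be confirmed against the criterion of Proposition \ref{ct}.
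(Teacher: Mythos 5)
Your overall strategy coincides with the paper's (the numerical identities of Lemma \ref{lem:conti} to bound genus and degree, restriction to vital hyperplanes $E_J\cong\Mo{6}$ via Lemmata \ref{lem:restrizione} and \ref{lem:restnum}, Theorem \ref{th:linear} in the linear case), but at the two places where the actual work happens your argument has genuine gaps. First, for $n=6$ you assert that the identities bound the $d_i$ ``so tightly'' that $C_i$ is a line. They do not: they only give $d_i\leq 3$, and the bulk of the paper's $n=6$ proof is the geometric exclusion of $d_1=2$ (three subcases of conics, killed e.g.\ by the observation that all fibers would have to lie on a fixed quadric hypersurface, or by forcing $m_{\{1,3,4,5\}}=2$) and of $d_i=3$ for all $i$ (where necessarily $m_I=1$ exactly when $|I|=3$, excluded by restricting to a vital plane and analysing the residual curve in $C_1\simeq D+L$). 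Nothing in the numerics alone rules these cases out. Second, for $n=7$ with $g=0$ you correctly identify the dichotomy (some index unforgotten $\Rightarrow$ linear fibration; all indices forgotten $\Rightarrow$ the triple map of case (iv)), but you explicitly defer the bookkeeping that \emph{is} the proof: the paper must choose an index realizing $\sum_{1\in I,\,|I|=2}m_I=m_{\{1,2\}}\leq 1$, derive $5d_1=5+5k$, produce a vital hyperplane through $p_2$ with $m_J=0$ (possibly after a Cremona transformation turning a base line into a plane so that Lemma \ref{lem:restrizione} applies), run the line-versus-conic analysis of $D$, and finally pin down the unique configuration $m_{\{1,2,3,4\}}=m_{\{1,3,5,6\}}=m_{\{1,4,5,6\}}=m_{\{1,6,7\}}=1$ before invoking equidimensionality of the triple forgetful map. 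Naming the target configuration is not deriving it.

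Two further points. Your exclusion of $g=1$ by ``rational line fibres contradict a genus-one $F_f$'' fails as stated: the restricted fibre $D$ is a union of components of a possibly special, reducible fibre of $f$ lying over the divisor $f(E_J)$, and rational components there are perfectly compatible with a smooth elliptic general fibre. The paper's contradiction is instead numerical: $g=1$ forces $\sum_{|I|=2}m_I=0$, while the $n=6$ classification of the restricted fibrations (lines through a point, which gives $m_{12}\neq 0$ by Lemma \ref{lem:restnum}, or lines through two skew lines, whose restricted base locus forces nonzero $m_{ij}$) is incompatible with the vanishing of all degree-two multiplicities. Finally, Proposition \ref{ct} cannot certify surjectivity of the case-(iv) map: as stated it requires an index lying outside all the $I_j$, whereas $\phi_{\{i_1,i_2,i_3\}}\times\phi_{\{i_4,i_5,i_6\}}\times\phi_{\{i_1,i_4,i_7\}}$ forgets every index of $\{1,\ldots,7\}$; you would need Theorem \ref{surj}, proved only in Section 4, or, as the paper actually does, the direct check that this map has equidimensional fibres whose fibres and base locus coincide with those of $f$.
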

\begin{proof}
In view of Lemma \ref{lem:zero} we must prove the
result only for $n=6$ and $n=7$. For
$n=6$ the result is already known,
\cite{FG}, but we prefer to restate it
in our terminology, as a warm up.

\noindent In case $n=6$, (iii) of Lemma \ref{lem:conti} gives a sum where all the coefficients of $m_I$ are not
positive. This implies that $g=0$, i.e. that $F_f$ is a rational curve. 
>From Lemma \ref{lem:conti} (i) and (iii) we get:
$$4d_i= \sum_{i \in I, |I|=2}2m_I+ \sum_{i \in I, |I|=3}m_I+ 2,$$
$$ 2\sum_{|I|=2}m_I + \sum_{i \in I, |I|=3}m_I=10,$$
from which we derive that for all $i \in \{1, \ldots, 6 \}$ we have $d_i \leq 3$.
If $d_1=1$ we are done by Theorem \ref{th:linear}. So we will assume $d_1 \in \{2,3\}$

\noindent Let us fix a Kapranov set $\{p_2, \ldots, p_6 \}$ in $\P^3$ and let $C_1=f_1(F_f)$.
\noindent If $d_1=2$, $C$ is a plane conic so that, since $F_f$ is connected, for all $I$ with
$|I|=2$ and $1 \in I$, $m_I \leq 1$. From Lemma \ref{lem:zero}, if $f$ is not
a fibration by lines, up to renumbering indexes, only the following cases can occur:
\begin{itemize}
\item[(i)] $m_{ \{12 \}}=m_{ \{13 \}}=1$, $m_{ \{1i \}}=0$ for all other $i$,
\item[(ii)] $m_{ \{12 \}}=1$, $m_{ \{1i \}}=0$ for all other $i$,
\item[(iii)] $m_{ \{1i \}}=0$ for all  $i$.
\end{itemize}
In the first case, from Lemma \ref{lem:conti} (ii) follows that
for every $i \geq 4$ there exist at least two sets $I$ with $1,i \in I$ and $|I|=3$
for which $m_I \ne 0$ but we know that $\sum_{1 \in I, |I|=3}m_I=2$ and this is impossible.

\noindent In the second case, as above, for every $i \geq 3$ there exist at least two sets $I$ with
$1,i \in I$ and $|I|=3$ for which $m_I \ne 0$; since the general conic doesn't intersect a plane in more than three points, up to
rearranging indexes,we must have that only for
$I= \{1,3,4 \}, \{1,3,5 \}, \{1,5,6 \}, \{1,4,6 \}$ is $m_I=1$,
while $m_I=0$ for any other $I$ with $|I|=3$. The other non zero $m_I$ must then correspond to
$m_{ \{4,5 \}}=m_{ \{3,6 \}}=1$, so that $C$ passes through one point and intersects
four lines which lie on a quadric as a curve of type $(2,2)$; in particular, in this case, all
fibers of $f$ lie on the quadric hypersurface through the point and the four lines, which is impossible.

\noindent In the last case, reasoning as above, up to rearranging indexes, we must have that:
$m_I=1$ for $I= \{1,2,3 \}, \{1,2,4 \}, \{1,2,5 \}, \{1,3,6 \}, \{1,4,6 \}, \{1,5,6 \}$, while
$m_I=0$ for any other $I$ with $|I|=3$. It follows from this that $m_{ \{1,3,4,5 \} }=2$, which is impossible.

\noindent We have to analyze the possibility that $d_i=3$ for all $i=1, \ldots, 6$. 
>From Lemma \ref{lem:conti} (i) and (iii) the unique numerical possibility for $f$ 
is that $m_I=1$ if $|I|=3$ and $m_I=0$ otherwise.
Let us consider the restriction of $f$ to the plane $\langle p_2,p_3,p_4 \rangle$. 
We have $m_{ \{1,2,3,4 \}}=0$ and from Lemma \ref{lem:restrizione} $f$ induces a fiber type morphism 
of relative dimension one on such a plane. We have an algebraic equivalence $C_1 \simeq D + L$ where $D$ is a general
fiber of such a restricted fibration; from the classification of fibrations
in $\Mo{4}$ and Stein's factorisation (see Corollary \ref{cor:stein}), since $d_1=3$, $D$ is either a conic or a set of $k \leq 3$ lines.
>From Lemma \ref{lem:restnum} the only possibility is that $D$ is a line or a couple of lines
in $\langle p_2,p_3,p_4 \rangle$ through  $\langle p_2,p_3,p_4 \rangle \cap \langle p_5,p_6 \rangle$. 
In either case the residual curve $L$, a conic or a line, 
intersects all other vital lines in $\P^3$ and this is impossible.

\bigskip
\noindent Let us now consider the case $n=7$.
>From Lemma \ref{lem:conti}[(iii)]
only two cases can occur for the genus $g$ of $F_f$: $g=1$ or $g=0$.
We will first exclude the case $g=1$.

\noindent If $g=1$, Lemma \ref{lem:conti} [(iii)] gives:
$$\sum_{|I|=2} m_I=0.$$
This implies that the base locus of $f$ consists of lines and planes only.
Since $n=7$ lines and planes are exchanged by standard Cremona transformations. 
Therefore, from Lemma \ref{lem:restrizione}, Lemma \ref{switch},
 Lemma \ref{lem:conti} (i), since $m_I=0$ for each $I$
such that $|I|=2$, $f$ induces a fiber type morphism of relative dimension one
 on each $E_I \simeq \overline{M}_{0,6}$.
 
\noindent Let us fix a Kapranov set $\{p_2, \ldots, p_7 \}$ in $\P^4$. Let $C_1=f_1(F_f)$ and 
$C \simeq D+L$ an algebraic equivalence, where $D$ is the general fiber of the restriction of $f$ to 
$\langle p_2,p_3,p_4,p_5 \rangle$.
Since we can work up to Cremona transformations, we may assume that $D$ is a set of $k$ lines,
for some $k \geq 1$. From the classification of fibrations on $\overline{M}_{0,6}$
and Stein's factorisation (see Corollary \ref{cor:stein}),
$D$ consists of a set of $k$ lines
through two fixed skew lines or through
a point, say $p_2$. If the latter is
true, then from Lemma \ref{lem:restnum} $m_{12}\neq 0$. Then we can
assume that all restrictions to vital
hyperplanes are of the first kind, or a
Cremona transform of these. This means
that the restricted base locus at any
vital hyperplane is either a pair
of lines or two incident lines and a
pair of
points. This is clearly impossible if
$m_{ij}=0$ for all possible values of
$i$ and $j$.

Assume that $F_f$ is a  rational curve
and thanks to Theorem \ref{th:linear} we
may assume that $f_1(F_f)$ is not a line.

>From Lemma \ref{lem:conti} [(iii)] we
have: 
\begin{equation}\sum_{|I|=2} m_I=6.
\label{eq:6}
\end{equation}

\noindent Suppose that for all $i=1,
\ldots, 7$ there are  at least two indexes
$j,k$ such that  $m_{\{i,j\}}, \ne 0, m_{\{i,k\}} \ne 0$. Then we must have
at least $7$ sets $I \subset \{1,\ldots,7 \}$ with $|I|=2$ and $m_I \ne 0$.
It then follows that we can always find an index $i \in \{1, \ldots,7 \}$
for which at most one number
$m_{\{i,j\}}$ is not zero.


Among all such indexes we will fix the one for which such a number
is minimum. With such a choice we have that, after renumbering, 
$$ \sum_{1 \in I, |I|=2} m_I =m_{\{ 1,2 \} } \leq 1.$$
Suppose indeed that $m_{\{1,2\}} \geq 2$; then for any index $i\ne 1,2$
either there exist $j,k \ne 1$  such that  $m_{\{i,j\}}, \ne 0, m_{\{i,k\}} \ne 0$
or there exists an index $j \ne 1$ such that $m_{\{i,j\}} \geq 2$, in any case violating
 $\sum_{1 \notin I, |I|=2} m_I=4$.

\noindent We will then assume that $ \sum_{1 \in I, |I|=2} m_I =m_{\{ 1,2 \} }=1$.
We fix a Kapranov set $\{p_2, \ldots, p_7 \}$ in $\P^4$ and  let $C_1=f_1(F_f)$.
>From Lemma \ref{lem:conti} (i) we have 
$$5d_1= 3+ 2\sum m_{\{1,j,l \}}+\sum m_{\{1,j,l,t\}} +2=5+5k,$$
for some $ k \geq 0$ for which $5k=2\sum m_{\{1,j,l\}}+\sum m_{\{ 1,j,l, t\}}$, so that $d_1=k+1$.
There are ten hyperplanes containing
$p_2$. Hence there are at least 4
hyperplanes $V^1_J$ with $m_J=0$. Let us
fix one of those, say
$\langle p_2,p_3,p_4,p_5 \rangle$,  such that $m_{\{ 1,2,3,4,5 \}}=0$.
The morphism $f$ is not linear and
$m_{\{1,2\}}=1$, then we can assume that 
the base locus of $f$ in $\langle p_2,p_3,p_4,p_5 \rangle$ does not
consist only of the point $p_2$. We will show that $f$ induces 
a fibration by curves on $E_{\{ 1,2,3,4,5 \}}$. If there
exists a vital plane $V^1_J$ with $J \subset \{ 1,2,3,4,5 \}$ and $m_J \ne 0$
we are done thanks to Lemma \ref{lem:restrizione}. If not, there will exists
a line $V^1_J$ with $J \subset \{ 1,2,3,4,5 \}$ and $m_J \ne 0$; but in this case
we can produce a Cremona transformation which transforms the line in a plane
and we can still apply Lemma \ref{lem:restrizione}.

\noindent Let us write $C_1 \simeq D+L$ for the algebraic equivalence of $C_1$ with
the union of $D$, the general fiber of the restricted fibration, and a residual $L$.
The curve $D$ contains $p_2$ and
$m_{12}=1$. In particular, from Lemma \ref{lem:restnum} $D$ is smooth
at $p_2$. Then from the classification
of fibrations by curves on
$\overline{M}_{0,6}$, and
Lemma \ref{lem:restnum}, 
$D$ is either a line through $p_2$
or a conic through $p_2$, another point, and two
intersecting lines 
(the Cremona transform of a line through
two skew lines). 

The degree of $C_1$ is $k+1$ and the only other relevant intersection
of $D$ (see definition \ref{def:relev}) is with the plane $\langle p_3, p_4, p_5 \rangle$,
hence we must have $m_{ \{1,3,4,5 \}}=k$. 

If $D$ is a line through $p_2$, $C_1$ is a conic through $p_2$, which intersects the plane
$\langle p_3, p_4, p_5 \rangle$ and either two lines, or a line and two planes
or four planes. In any case, since $m_{\{1,3,4,6,7 \}}=m_{\{1,4,5,6,7 \}}=m_{\{1,3,5,6,7 \}}=1$
we will always be able to find at least other three hyperplanes which are intersected
by $C_1$ at a general point, contradicting $\sum_{|I|=2} m_I=6.$

\noindent We must analyze the possibility that $D$ is a conic
through $p_2$, another point and two concurrent lines in $\langle p_2,p_3,p_4,p_5 \rangle$.
>From Lemma \ref{lem:restnum} and our choice of the index, the only possibility is that
the other point is $\langle p_2,p_3,p_4,p_5 \rangle \cap \langle p_6, p_7 \rangle.$

Since $C_1$ cannot intersect too many hyperplanes,
we must have that $C_1=D$ is a conic. Since $C_1$ is a conic  $k=1$, so that
$5=2\sum m_{\{1,j,l\}}+\sum m_{\{ 1,j,l, t\}}$; moreover $C$ intersects
the lines $\langle p_3, p_5 \rangle$ and $\langle p_4,p_5 \rangle$
and the plane $\langle p_2, p_3, p_4 \rangle$.

\noindent The unique possibility is then that 
$m_{\{1,2,3,4 \}}=m_{\{1,3,5,6\}}=m_{\{1,4,5,6\}}=m_{\{1,6,7,\}}=1$
are the only non zero intersections of $C$ with vital spaces.
It turns out that fibers and base locus of $f$ coincide with the one given by

$$\phi_{\{1,5,6 \}} \times \phi_{\{2,3,4\}} \times \phi_{\{2,6,7 \}}: \overline {M_{0,7}} \to \overline {M_{0,4}} \times \overline {M_{0,4}} \times \overline {M_{0,4}}$$
which has equidimensional fibres, so that $f$ factors through such a forgetful map.

The case for which $m=0$ is treated similarly; in this case the base locus consists only
of lines and planes, we find many hyperplanes which are fibred by $f$ and the only
possibility is that we end up with a fibration given, via $f_1$, by conics through the lines
$\langle p_5, p_6 \rangle$, $\langle p_2, p_4 \rangle$, $\langle p_2, p_3 \rangle$, $\langle p_6, p_7 \rangle$.
This is the above fibration up to switching indexes $1$ and $7$.
\end{proof}

Applying Stein's factorisation we obtain the following:

\begin{corollary} \label{cor:stein}
Let $n \leq 7$, let $\{1, \ldots, n \}= \{i_1, \ldots, i_n \}$ and let
$f$ be a fiber type morphism of relative dimension one. Then there exists a factorisation
$f=g \cdot h$ where $g$ is a generically finite morphism and $h$ is one of the following:
\begin{itemize}
\item[(i)] $h=\phi_{i_j}$ for some $j \in \{1,\ldots,n\}$,
\item[(ii)] $h= \phi_{\{i_1,i_2\}} \times \phi_{\{i_3,i_4\}}$ and $n=6$,
\item[(iii)] $h= \phi_{\{i_1,i_2\}} \times \phi_{\{i_3,i_4,i_5\}}$ and $n=7$,
\item[(iv)] $h= \phi_{\{i_1, i_2, i_3\}} \times \phi_{\{i_4,i_5,i_6\}} \times \phi_{\{i_1,i_4, i_7\}}$ and $n=7$.
\end{itemize}
\end{corollary}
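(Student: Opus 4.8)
The plan is to deduce this Corollary from Proposition \ref{prop:curves} by interposing a Stein factorisation, thereby reducing the general fiber type morphism of relative dimension one to the connected-fiber case that has already been classified.

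First I would apply Stein factorisation to the given morphism $f:\Mo{n}\to X$. Since $\Mo{n}$ is smooth (in particular normal) and we work over $\C$, this produces a factorisation $f=s\circ\bar{f}$, where $\bar{f}:\Mo{n}\to X'$ is a morphism with connected fibers and $s:X'\to X$ is finite. The key observation, which I would verify next, is that $\bar{f}$ is again of relative dimension one: because $s$ is finite we have $\dim X'=\dim X=n-4$, so the general fiber of $\bar{f}$ is connected and one-dimensional, i.e.\ a connected curve. Hence $\bar{f}$ is a fibration by curves in the sense of the definition, and the hypotheses of Proposition \ref{prop:curves} are met.

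Applying Proposition \ref{prop:curves} to $\bar{f}$ then yields a factorisation $\bar{f}=g'\circ h$, where $g'$ is a birational morphism and $h$ is one of the forgetful morphisms listed in items (i)--(iv). Composing with $s$ gives $f=s\circ g'\circ h$, and setting $g:=s\circ g'$ we obtain $f=g\circ h$ with $h$ of the required shape. Finally, $g$ is generically finite, being the composition of the finite morphism $s$ with the birational morphism $g'$; this is exactly the weakening of ``birational'' to ``generically finite'' that distinguishes the Corollary from the Proposition, and it is precisely the effect of the Stein factorisation.

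There is essentially no serious obstacle in this argument, as it is a formal consequence of the Proposition. The only point requiring genuine care is the middle step: one must confirm that the finite part $s$ of the Stein factorisation preserves the relative dimension, so that the connected-fiber morphism $\bar{f}$ still has one-dimensional fibers and therefore really is a fibration by curves to which Proposition \ref{prop:curves} applies. Once this is checked, the combinatorial list of possible $h$ transfers verbatim from the Proposition to the Corollary.
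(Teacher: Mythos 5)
Your proposal is correct and is exactly the paper's argument: the authors state the Corollary with the single phrase ``Applying Stein's factorisation'' after Proposition \ref{prop:curves}, and your write-up just fills in the routine details (the finite part $s$ preserves the target dimension, so the connected-fiber part $\bar{f}$ is a fibration by curves, and $g := s \circ g'$ is generically finite as a composition of a birational and a finite morphism). Nothing further is needed.
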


\section{Nonlinear fibrations}
\label{sec:nonl}
Case (iv) in Proposition \ref{prop:curves} is an example of a fibration by curves
whose fibers cannot be reduced by standard Cremona transformations centered at subsets
of the Kapraneov set to lines. In this section we will show that there exist fibrations
by curves whose fibers are not even rational. Nevertheless the examples we find are 
forgetful morphisms.

\noindent The first step is to
generalise Castravet and Televev
Criterion, Proposition \ref{ct} see \cite{CT} 
to recognise when a forgetful morphism
is surjective when all indexes are
forgotten. 

We start with a collection $I_1, \ldots , I_k$ of subsets of $\{1, \ldots, n \}$,
with no inclusions among any two sets of the collection. In Castravet-Televev
comtext there exists an index, say $1$, such that 
$1 \notin \cup_{j=1,\ldots,k} I_j$.
If we consider Kapranov's map $f_1$ we have $k$ linear spaces $U_i:=V^1_{I_i}$.
Now, the condition that for any $ S \subset \{1, \ldots, k \}$,
$$(*) \qquad n-|\cap_{j \in S}I_j|-3 \geq \sum_{j \in S}(n-|I_j|-3)$$
is equivalent to the condition that for any $ S \subset \{1, \ldots, k \}$
$$(+) \qquad {\rm codim} (\cap_{j \in S} U_j) -1 \geq \sum_{j \in S} [{\rm codim}(U_i)-1].$$

Finally, condition (*)  and $n-3-\sum_{j=1}^k(n-|I_j|-3)=:h \geq 0$ ensure surjectivity of $\psi= \prod_{j=1}^k \phi_{I_j}$ because
condition (+) ensures maximal rank for the differential of $\prod_{j=1}^k \pi_{U_j}$ and $h \geq 0$ gives surjectivity.

\noindent We will assume now to have a collection $I_1, \ldots , I_k$ of subsets of $\{1, \ldots, n \}$,
with no inclusions among any two sets of the collection, and that for any $j \in \{1, \ldots, n \}$ there exists
$ l \in \{ 1, \ldots, k \} $such that $j \in I_l$. We will show that the same condition (*) is necessary and sufficient
for surjectivity of the forgetful morphism $\psi=\prod_{j=1}^k \phi_{I_j}$. Suppose that $1 \in \cap_{j=1, \ldots, t} I_j$ and
that $1 \notin \cup_{i \geq t+1} I_j$, with $0\leq t\leq k$. Consider the Kapranov map $f_1 : \Mo{n} \to \P^{n-3}$ and consider the following
subspaces of $\P^{n-3}$: if $j \leq t$, $U_j:=V^1_{I_j\setminus\{1\}}$, if $j>t$, $U_j:V^1_{I_j}$.

\begin{proposition}
With the above notations, suppose that
for any  $S \subseteq \{1, \ldots k \}$ we have:
$ (*) \qquad n-|\cap_{j \in S}I_j|-3 \geq \sum_{j \in S}(n-|I_j|-3)$.
Then the differential of the projection $\prod_{j=1}^k \pi_{U_j}$ has maximal
rank. In particular, if $h \geq t$, $\psi$ is surjective. If $h < t$
there exists a general point $x \in \P^{n-3}$
such that if $U^{'}_j:= \langle U_j , x \rangle$ for $j \leq t$
and $U_j^{'}:= U_j$ for $j>t$, the linear projection $\prod_{j=1}^{k} \pi_{U^{'}_j}$
is surjective.
\label{presurj}
\end{proposition}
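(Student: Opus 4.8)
The plan is to transport everything to $\P^{n-3}$ through the birational Kapranov map $f_1$, to compute the differential of $\prod_{j}\pi_{U_j}$ at a general point as a single intersection of linear subspaces, and then to extract surjectivity by a dimension count, separating the cases $h\ge t$ and $h<t$. First I would fix a general $x\in\P^{n-3}$ and identify the kernel of the differential. The fibre of $\pi_{U_j}$ through $x$ is the linear span $\langle U_j,x\rangle$, so $\ker d(\pi_{U_j})_x$ is its direction $W_j\subseteq T_x\P^{n-3}$, of dimension $\dim U_j+1$; hence $\ker d(\prod_j\pi_{U_j})_x=\bigcap_j W_j$. Reading off $\dim U_j$ from Definition \ref{kap} ($\dim U_j=|I_j|-2$ when $j\le t$ and $\dim U_j=|I_j|-1$ when $j>t$), the codimensions add up to $\sum_j\operatorname{codim}_{T_x}W_j=(n-3)-(h-t)$. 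Thus the differential has maximal rank precisely when $\dim\bigcap_j W_j$ equals its expected value $\max(0,h-t)$, and this equality is the content of the first assertion.

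The heart of the proof is to establish that equality under hypothesis $(*)$. Passing to an affine chart centred at $x$ turns each $W_j$ into the span of the generic vectors $\{v_i:=p_i-x : i\in A_j\}$, where $A_j:=I_j\setminus\{1\}$ for $j\le t$ and $A_j:=I_j$ for $j>t$; equivalently, declaring $p_1:=x$ exhibits $\{p_1,\dots,p_n\}$ as $n$ points in linear general position and each $W_j$ as the direction of the vital space spanned by $\{p_i : i\in I_j\cup\{1\}\}$. I would then run the inclusion--exclusion transversality computation underlying Proposition \ref{ct} (see \cite{CT}): for generic $p_i$ the intersection $\bigcap_j W_j$ attains its expected dimension exactly when, for every $S\subseteq\{1,\dots,k\}$, the partial spans indexed by $S$ fail to drop rank, and a direct bookkeeping identifies these rank inequalities with the inequalities $(*)$ --- the membership $1\in I_j\iff j\le t$ being exactly what shifts the expected fibre dimension from $h$ to $h-t$. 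Matching the combinatorial condition $(*)$, written in terms of the $I_j$, to the genericity condition on the subspaces $W_j$, whose index sets are the $A_j$, and upgrading the inequalities to the sharp dimension of the full intersection, is the step I expect to be the main obstacle.

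Granting maximal rank, the case $h\ge t$ follows by transporting the conclusion back through $f_1$. Here $\max(0,h-t)=h-t$, so $\prod_j\pi_{U_j}$ is dominant onto its target of dimension $(n-3)-(h-t)$; since $1\notin A_j$ for all $j$, this product of projections is, via Kapranov's maps, the forgetful morphism $\Phi:=\prod_j\phi_{A_j}$, which is therefore a dominant morphism of projective varieties and hence surjective. Finally, for $j\le t$ one has the factorisation $\phi_{I_j}=\beta_j\circ\phi_{A_j}$, where $\beta_j:\Mo{n-|A_j|}\to\Mo{n-|I_j|}$ forgets the point labelled $1$, while for $j>t$ one has $A_j=I_j$; writing $\beta:=\prod_j\beta_j$ (a product of forgetful maps, hence surjective), we obtain $\psi=\beta\circ\Phi$, so $\psi$ is surjective.

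For $h<t$ the map $\prod_j\pi_{U_j}$ cannot be dominant, because its target has dimension $(n-3)-(h-t)>n-3$; the remedy is to enlarge each centre with $j\le t$ to $U'_j=\langle U_j,x\rangle$ for a general $x$, keeping $U'_j=U_j$ for $j>t$. Then $\dim U'_j=|I_j|-1$ for every $j$, so $\operatorname{codim}U'_j-1=n-|I_j|-3$, the target $\prod_j\P^{\,n-|I_j|-3}$ acquires the dimension $(n-3)-h\le n-3$ of the target of $\psi$, and $(*)$ becomes literally the Castravet--Tevelev inequality $(+)$ for the genuinely linear system of centres $U'_j$. Rerunning the transversality step of the second paragraph for the $U'_j$ --- where the expected fibre dimension is now $h$ --- together with the standing inequality $h\ge0$, shows that $\prod_j\pi_{U'_j}$ is dominant, i.e. surjective, which is the remaining assertion.
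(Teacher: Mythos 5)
Your setup is sound and, where it is complete, it follows the paper's own route: the identification of $\ker d(\prod_j \pi_{U_j})_x$ with $\bigcap_j W_j$, the count $\sum_j \operatorname{codim} W_j = (n-3)-(h-t)$, the factorisation $\psi = (\phi_1^{t} \times \mathrm{id}^{k-t}) \circ (\phi_{I_1\setminus\{1\}} \times \cdots \times \phi_{I_k})$ in the case $h \geq t$, and the passage to the enlarged centres $U'_j$ when $h<t$ are exactly the steps of the paper's proof. The problem is that the entire content of the first assertion --- that $(*)$ for $I_1,\ldots,I_k$ forces $\dim \bigcap_j W_j = \max(0,h-t)$ at a general $x$ --- is precisely the step you leave open: you assert that ``a direct bookkeeping identifies these rank inequalities with the inequalities $(*)$'' without doing the bookkeeping, and you yourself flag it as the main obstacle. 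So the proposal establishes the two surjectivity consequences only conditionally on an unproved core.

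Moreover, that core cannot be filled the way you describe. The rank conditions on the spans of your $A_j$'s are the Castravet--Tevelev inequalities for the reduced collection $I_1\setminus\{1\}, \ldots, I_t\setminus\{1\}, I_{t+1}, \ldots, I_k$, and these do \emph{not} follow from $(*)$ for $I_1,\ldots,I_k$ whenever an instance of $(*)$ involving sets through the index $1$ is tight. Concretely, take $n=10$, $I_1=\{1,2,3,4,5\}$, $I_2=\{1,2,3,6,7\}$, $I_3=\{5,6,7,8,9,10\}$: there are no inclusions, every index is covered, $(*)$ holds for every $S$ (with equality for $S=\{1,2\}$, namely $10-3-3=4=2+2$), and $t=2=h$; yet for the reduced sets and $S=\{1,2\}$ one gets $10-|\{2,3\}|-3=5<6$. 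The failure is geometric, not merely formal: in $\P^{7}$ one has $\langle U_1,x\rangle \cap \langle U_2,x\rangle = \langle p_2,p_3,x\rangle$, and this plane meets the hyperplane $\langle U_3,x\rangle = \langle p_5,\ldots,p_{10},x\rangle$ in a line through $x$, so $\ker d(\prod_j \pi_{U_j})_x$ is one-dimensional, whereas maximal rank (here $h-t=0$ and the target has dimension $7$) would force it to vanish; the morphism $\psi$ is still surjective in this example, but not via maximal rank of $\prod_j \pi_{U_j}$. Be aware that the paper's own proof dismisses this point with the one-line claim that $(*)$ passes ``a fortiori'' to the reduced collection, which the same example contradicts; so your instinct located the genuine crux, but closing it requires more than transversality bookkeeping at a general point --- for instance, when an instance of $(*)$ through the index $1$ is tight one should first factor $\psi$ through $\phi_{I_i \cap I_j}$ in the spirit of Proposition \ref{**} and induct, rather than argue by genericity alone.
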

\begin{proof}

The morphism $\psi$ can be factorised through
 $\psi_{1}:= \phi_{I_1 \setminus \{1\}} \times \cdots
 \phi_{I_t \setminus \{1 \}} \times \cdots \times \phi_{I_k}$
followed by $\phi_{1}^{t} \times {\rm id}^{k-t}$,
where $\phi_{1}^{t}=\phi_1 \times \cdots \phi_1$ $t$ times.
The map $\psi_1$ has the property of being factorisable through the linear projection
\[
\xymatrix{
\overline{M}_{0,n}\ar[d]^{\psi_1}\ar[r]^{f_1}
&\P^{n-3} \ar@{.>}[d]^{\pi_{U_1} \times \cdots \times \pi_{U_k}}\\
\overline{M}_{0,n-|I_1|+1} \times \cdots \times \overline{M}_{0,n-|I_k|}\ar[r]^{f_1}
&\P^{n-3-|I_1|+1} \times \cdots \times \P^{n-3-|I_k|}}
\]

Now, if $I_1, \ldots, I_K$ satisfy (*) a-fortiori (*) is satisfied
for the collection $I_1 \setminus \{1\}, \ldots, I_t \setminus \{1\}, I_{t+1}, \ldots, I_k$.
In particular the differential of $\pi_{U_1} \times \cdots \times \pi_{U_k}$ has maximal rank.
If $h \geq t$, the projection $\pi_{U_1} \times \cdots \times \pi_{U_k}$ is surjective 
and $\psi= (\phi_{1}^{t} \times {\rm id}^{k-t}) \circ \psi_1$ is the composition of 
two surjections, so that it is surjective.

In case $h<t$, condition (*) ensures that if $x \in \P^{n-3}$
is in linearly general position with respect to all other  points
of the Kapranov set $K$, the linear spaces $U^{'}_j:= \langle U_j , x \rangle$ for $j \leq t$
and $U_j^{'}:= U_j$ for $j>t$ satisfy condition (+) and are then
such that that $\pi_{U^{'}_1} \times \cdots \times \pi_{U^{'}_k}$
has maximal rank differential. Since
the expected dimension of a fibre of $\prod_{j=1}^{k} \pi_{U^{'}_j}$
is $h \geq 0$, the linear projection $\prod_{j=1}^{k} \pi_{U^{'}_j}$
is surjective.
\end{proof}
As a corollary we have the following:

\begin{theorem}
The morphism $\psi=\prod_{j=1}^k \phi_{I_j}$ is surjective if and
only if for any $S \subseteq \{1, \ldots k \}$ we have:
$$ (*) \qquad n-|\cap_{j \in S}I_j|-3 \geq \sum_{j \in S}(n-|I_j|-3).$$
\label{surj}
\end{theorem}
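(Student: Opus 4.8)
The plan is to derive Theorem \ref{surj} as an immediate consequence of Proposition \ref{presurj}, whose proof has already done essentially all the work. Proposition \ref{ct} (Castravet--Tevelev) established the forward direction in the special case where some index $1$ is forgotten by no $I_j$, and in fact its necessity argument applies without that hypothesis: if $\psi$ is surjective, then so is every partial projection $\psi_S = \prod_{j \in S}\phi_{I_j}$, and comparing the dimension of the source $\Mo{n}$ with the dimension of the image $\prod_{j \in S}\overline{M}_{0,n-|I_j|}$ together with the fact that the general fiber of $\psi_S$ factors through $\phi_{\cap_{j \in S}I_j}$ yields exactly the inequality $(*)$. So the genuine content is the \emph{sufficiency} direction, and this is precisely what Proposition \ref{presurj} delivers.

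First I would recall the reduction performed before Proposition \ref{presurj}: given a collection $I_1,\ldots,I_k$ with no inclusions and with every index in $\{1,\ldots,n\}$ lying in some $I_l$, one chooses an index, say $1$, records how many sets contain it (the number $t$, so $1 \in \cap_{j \le t}I_j$ and $1 \notin \cup_{j>t}I_j$), and passes to the associated linear spaces $U_j \subset \P^{n-3}$ via the Kapranov map $f_1$. The point is that condition $(*)$ is equivalent to the codimension condition $(+)$, which guarantees maximal rank of the differential of the product projection $\prod_{j=1}^k \pi_{U_j}$.

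Next I would invoke the two cases of Proposition \ref{presurj} directly. If $h \ge t$, the proposition asserts outright that $\psi$ is surjective, so there is nothing more to prove. If $h < t$, the proposition produces a general point $x$ such that the enlarged projection $\prod_{j=1}^k \pi_{U'_j}$ is surjective; the remaining step is to translate this statement about linear projections in $\P^{n-3}$ back into surjectivity of $\psi$. Here I would use that $\psi$ factors as $(\phi_1^t \times \mathrm{id}^{k-t}) \circ \psi_1$, that $\psi_1$ is itself factored through the linear projection $\prod \pi_{U_j}$, and that adjoining the general point $x$ corresponds, on the modular side, exactly to the $\phi_1$ factors that distinguish $\psi$ from $\psi_1$; surjectivity of the enlarged linear projection then forces surjectivity of $\psi$.

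The main obstacle I anticipate is not in any single inequality but in the bookkeeping of the case $h<t$: one must be careful that enlarging $U_j$ to $\langle U_j, x\rangle$ genuinely models the composition with the forgetful maps $\phi_1$, and that the dimension count $\dim W = \sum_j (n-|I_j|-3)$ against $\dim \Mo{n} = n-3$ is consistent with $h = n-3-\sum_j(n-|I_j|-3)$ being the relative dimension. Since Proposition \ref{presurj} has already absorbed the delicate linear-algebra verification (maximal rank via $(+)$, and the general-position choice of $x$), the proof of Theorem \ref{surj} itself should reduce to citing that proposition for sufficiency and repeating the dimension-theoretic necessity argument of Proposition \ref{ct}, which carries over verbatim once partial projections $\psi_S$ are used.
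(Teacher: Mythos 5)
Your necessity argument and your handling of the case $h \ge t$ match the paper: partial projections $\psi_S$ plus the dimension count give $(*)$, and Proposition \ref{presurj} does settle surjectivity outright when $h \ge t$ (and, with $t=0$, recovers Castravet--Tevelev). But there is a genuine gap in your case $h<t$, and it is exactly where the paper's proof does its real work. You assert that adjoining the general point $x$ to the centers, i.e.\ passing from $U_j$ to $U'_j=\langle U_j,x\rangle$, ``corresponds, on the modular side, exactly to the $\phi_1$ factors that distinguish $\psi$ from $\psi_1$,'' so that surjectivity of $\prod_j \pi_{U'_j}$ forces surjectivity of $\psi$. This is false as stated: in the Kapranov model the fibers of $f_1\circ\phi_1\circ f_1^{-1}$ are rational normal curves of degree $n-3$ through the entire Kapranov set (Lemma \ref{lem:fibers}), whereas the fibers of the extra projection from $x$ are lines through $x$. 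The enlarged linear projection $\prod_j \pi_{U'_j}$ is therefore a different map from the projective model of $\psi$, and its surjectivity gives no direct conclusion about $\psi$; Proposition \ref{presurj} only supplies it as an auxiliary transversality tool.

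The paper bridges precisely this gap with a nontrivial argument you have skipped: it chooses a further general linear space $U'_{k+1}$ of codimension $h+1$, reduces surjectivity of $\psi$ to generic finiteness of $(f_1\circ\psi\circ f_1^{-1})\times \pi_{U'_{k+1}}$, and then shows that the general fiber $H_{u,v}$ of $\pi_{U_1}\times\cdots\times\pi_{U_k}\times\pi_{U'_{k+1}}$ is birational to a complete intersection of pullbacks of $t$ hyperplanes (this is where the maximal-rank transversality coming from the point $x$ of Proposition \ref{presurj} is actually used). Consequently $H_{u,v}$ cannot contain all the coordinate points in any factor, hence cannot contain a fiber of $f_1\circ\phi_1\circ f_1^{-1}$, which is a rational normal curve through all those points; this transversality to the fibers of $(f_1\circ\phi_1\circ f_1^{-1})^t$ is what yields generic finiteness and hence surjectivity. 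Without this step (or a substitute for it), your proof of sufficiency only covers $h\ge t$. A smaller omission: the paper also disposes of the case $t=k$ separately, by factoring $\psi$ through $\phi_1$ and recursing on $(*)$, which your reduction to Proposition \ref{presurj} alone does not address.
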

\begin{proof}
Suppose that $\psi$ is surjective and
choose $S \subset \{ 1, \ldots, k \}$; let $W= \cap_{j \in S}I_j$ and let $w=|W|$.
The product of maps $\phi_{I_j}$ with $j \in S$ is also surjective and factors
through the forgetful $\phi_W : \Mo{n} \to \overline{M}_{0,n-w}$, so that also
the forgetful map $ \prod_{j \in S} \phi_{I_j \setminus W} :
\overline{M}_{0,n-w} \to \prod_{j \in S} \overline{M}_{0,n-|I_j|}$
is surjective. In particular the dimension of the
source must be bigger than the dimension of the target
so that $ n-w-3=n-|\cap_{j \in S}I_j|-3 \geq \sum_{j \in S}(n-|I_j|-3).$

We need to show that the condition is also sufficient.
If $t=0$ this is Castravet-Televev Theorem 3.1 in \cite{CT}; if $t=k$
then we can factor $\psi$ via $\phi_1$ and since
$(*)$ holds the procedure will terminate,
so that we will also assume $0 <t <k$. In view of Proposition \ref{presurj}
we will assume $h < t$.

As already noticed in Proposition \ref{presurj}
the morphism $\psi$ can be factorised through
 $\psi_{1}:= \phi_{I_1 \setminus \{1\}} \times \cdots
 \phi_{I_t \setminus \{1 \}} \times \cdots \times \phi_{I_k}$
followed by $\phi_{1}^{t} \times {\rm id}^{k-t}$.
The map $\psi_1$ has the property of being factorisable through the linear projection
\[
\xymatrix{
\overline{M}_{0,n}\ar[d]^{\psi_1}\ar[r]^{f_1}
&\P^{n-3} \ar@{.>}[d]^{\pi_{U_1} \times \cdots \times \pi_{U_k}}\\
\overline{M}_{0,n-|I_1|+1} \times \cdots \times \overline{M}_{0,n-|I_k|}\ar[r]^{f_1}
&\P^{n-3-|I_1|+1} \times \cdots \times \P^{n-3-|I_k|}}
\]

where we recall that if $j \leq t$, $U_j:=V^1_{I_j\setminus\{1\}}$,
while if $j>t$, $U_j:V^1_{I_j}$.

From now on we will mainly work in the projective setting
so that for instance instead of considering a map like
$\phi_1$ we will mainly be interested in its projective
counterpart $f_1 \circ \phi_1 \circ (f_1)^{-1}$, which is a
rational fibration whose general fiber is a rational normal curve 
containing all the points of a Kapranov set.

We apply Proposition \ref{presurj} and we
find a general point $x \in \P^{n-3} $  such that
$\pi:= \prod_{j=1}^{k} \pi_{U^{'}_j}$ is surjective. Notice that $\pi$ is obtained by composition
of $\pi_{U_1} \times \cdots \times \pi_{U_k}$ and $t$ linear projections from a point on
the first $t$ factors of the image.
Condition $(+)$ is equivalent to a finite number of linear equations
with maximum rank. We can then find a general
subspace $U^{'}_{k+1}$ of codimension
$h+1$ in such a way that codimension $h$ linear subspaces containing
$U^{'}_{k+1}$ are transverse to fibers of $f_1 \circ \psi \circ f_1^{-1}$, so that $\psi$ is surjective if
and only if $(f_1 \circ  \psi \circ f_1^{-1}) \times \pi_{U^{'}_{k+1}}$ is generically finite.
We will show that $(f_1 \circ \psi \circ f_1^{-1}) \times \pi_{U^{'}_{k+1}}$ is generically finite.
In order to do this it will be enough to show that fibers of $\pi_{U_1} \times \cdots \times \pi_{U_k} \times \pi_{U^{'}_{k+1}}$ are transverse to fibers of $(f_1 \circ \phi_1 \circ f_1^{-1})^t$.

We may assume that $\pi_{U_1} \times \cdots \times \pi_{U_k} \times \pi_{U^{'}_{k+1}}$ is birational, 
by generality of $U^{'}_{k+1}$;
this implies that for general $u \in
\prod_{j >t} \P^{n-3-|I_j|}$, $v \in
\P^{h}$, the general fiber of $\pi_{U_1} \times \cdots \times \pi_{U_k} \times \pi_{U^{'}_{k+1}}$,

$$H_{u,v}:=(\pi_{U_1} \times \cdots \times \pi_{U_k} \times \pi_{U^{'}_{k+1}})^{-1}
( \prod_{j=1}^{t} \P^{n-3- |I_j|+1} \times \{(u,v)\})
\subset \prod_{j=1}^{t} \P^{n-3-
  |I_j|+1}$$
 has codimension $t$ and
is transverse to fibers of linear projections
from the images of $x$ on each of the first $t$ factors. This implies that
$H_{u,v}$ is birational to the complete intersection
of pullbacks of $t$ hyperplanes on the factors. It then follows that $H_{u,v}$,
for general $u,v$
does not contain all coordinate points in each
factor. Suppose that $\psi$ is not surjective,
so that $(f_1 \circ \psi_1 \circ f_1^{-1}) \times \pi_{U^{'}_{k+1}}$ is not birational.
It then follows that $H_{u,v}$ is not transverse to the fibers of $(f_1 \circ \phi_1 \circ f_1^{-1})^t$ so that
if $(s,u,v) \in {\rm Im}((f_1 \circ \psi \circ f_1^{-1}) \times \pi_{U^{'}_{k+1}})$
$((f_1 \circ \phi_1 \circ f_1^{-1})^t)^{-1}(s,u,v) \subset H_{u,v}$ must contain at least on a factor a rational
normal curve through all coordinate points, i.e. a fiber
of $f_1 \circ \phi_1 \circ f_1^{-1}$. This contradicts the property of $H_{u,v}$ of
not containing all the points of a Kapranov set in a factor.
\end{proof}

We will apply this criterion to ensure the surjectivity of some forgetful morphisms.
The basic example is the following, that is constructed by the consideration of linear series
on some genus $5$ hypergraph curves, as in \cite {CT} 

\begin{example}\label{ex:elliptic} Let
  us consider the following forgetful map,
$$\psi =\phi_{\{1,2,3,4 \}} \times \phi_{\{1,2,5,6\}} \times \phi_{\{3,4,7,8\}} \times \phi_{\{5,6,7,8\}}: \overline{M}_{0,8}
\to \prod \overline{M}_{0,4}.$$
\end{example}

\begin{proposition}
The morphism $\psi$ is surjective and $F_{\psi}$ is a smooth elliptic curve.
\end{proposition}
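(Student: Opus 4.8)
\emph{Surjectivity.} The four index sets all have cardinality $4$, so $n-|I_j|-3 = 8-4-3 = 1$ for each $j$, and there are no inclusions among them. Thus condition $(*)$ of Theorem \ref{surj} for a subset $S \subseteq \{1,2,3,4\}$ reads $|\cap_{j \in S} I_j| \leq 5 - |S|$, and this I would check directly: the pairwise intersections $I_1 \cap I_2 = \{1,2\}$, $I_1 \cap I_3 = \{3,4\}$, $I_2 \cap I_4 = \{5,6\}$, $I_3 \cap I_4 = \{7,8\}$ have cardinality $2$, while $I_1 \cap I_4 = I_2 \cap I_3 = \emptyset$ and every triple and the quadruple intersection is empty. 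Hence $(*)$ holds and $\psi$ is surjective by Theorem \ref{surj}. Since $h = n-3-\sum_j(n-|I_j|-3) = 5-4 = 1$, the general fibre $F_\psi$ is a curve, so $\psi$ is a fibration by curves and $F_\psi = \cap_{j=1}^4 F_{\phi_{I_j}}$ by Lemma \ref{lem:fibers}.

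\emph{Identifying the fibre.} Each $\phi_{I_j}$ records the cross--ratio of the four points indexed by the complement $I_j^{*}$; concretely $\psi$ sends $(q_1,\dots,q_8)$ to the four cross--ratios $\lambda_4$ of $(q_1,q_2,q_3,q_4)$, $\lambda_3$ of $(q_1,q_2,q_5,q_6)$, $\lambda_2$ of $(q_3,q_4,q_7,q_8)$ and $\lambda_1$ of $(q_5,q_6,q_7,q_8)$. To describe the fibre over a general point $(\lambda_1,\lambda_2,\lambda_3,\lambda_4)$, I would use the $\mathrm{PGL}_2$--action to normalise $q_1=0$, $q_2=\infty$, $q_3=1$. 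Then $\lambda_4$ fixes $q_4=:a$, the condition on $\lambda_3$ becomes the linear relation $q_5=\lambda_3 q_6$, and the condition on $\lambda_2$ becomes a M\"obius relation $q_8 = N(q_7)$ with $N$ depending on $a,\lambda_2$. Writing $t=q_6$, $s=q_7$, the remaining condition
$$ \lambda_1 = \frac{(\lambda_3 t - s)\,(t-N(s))}{(\lambda_3 t - N(s))\,(t-s)} $$
becomes, after clearing the denominator of $N$, a single polynomial equation $E(t,s)=0$; since $(t,s)=(q_6,q_7)$ determine the whole configuration, this exhibits $F_\psi$ as birational to the affine curve $\{E=0\}$.

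\emph{The genus.} The final step is to read off that $\{E=0\}$ is a curve of bidegree $(2,2)$ in $\P^1 \times \P^1$. One checks that the coefficients of $t^2$ and of $s^2$ are $\lambda_3(\lambda_1-1)$ and $(\lambda_2-a)(\lambda_1-1)$ respectively, both nonzero for general parameters, so $E$ is a genuine member of $|\O_{\P^1\times\P^1}(2,2)|$; by adjunction its arithmetic genus is $(2-1)(2-1)=1$. Finally, generic smoothness (we work in characteristic $0$ and $\overline{M}_{0,8}$ is smooth) guarantees that $F_\psi$ is smooth, while a smooth member of the very ample system $|\O_{\P^1\times\P^1}(2,2)|$ is connected; therefore $F_\psi$ is a smooth connected curve of genus one, i.e. an elliptic curve.

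\emph{Main obstacle.} The delicate point is the passage from the cross--ratio equations to the clean $(2,2)$ form: one must verify that no unexpected cancellation lowers the bidegree and that the general member is smooth and irreducible, rather than, say, a nodal rational curve or a union of two $(1,1)$ curves. A reassuring cross--check is the purely numerical route via Lemma \ref{lem:conti}(iii): once the intersection numbers $m_I$ of $C_1=f_1(F_\psi)$ with the vital spaces are extracted from the geometry above, the identity $\sum_{1\in I}[(|I|-2)(6-|I|)-2]\,m_I + 7(2-2g)=0$ forces $g=1$, confirming the computation independently.
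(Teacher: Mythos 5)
Your surjectivity argument is exactly the paper's: condition $(*)$ of Theorem \ref{surj} checked on all subsets $S$, plus $h=1$, and this part is fine. Your treatment of the fibre, however, has a genuine gap, and it sits precisely at the point you label ``the delicate point'': that point is not a deferrable verification but the actual content of the proposition. Your identification of $F_\psi$ with the $(2,2)$ curve $D=\overline{\{E=0\}}\subset\P^1\times\P^1$ is an isomorphism only over the locus of smooth configurations; the finitely many points of $D$ where marked points collide (e.g.\ $t=s$, $\lambda_3 t=s$, $t=N(s)$, or $t,s\in\{0,1,\infty,a\}$) correspond to boundary points of $\overline{M}_{0,8}$, and these are exactly the points where $D$ may be singular. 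Generic smoothness does legitimately give that the abstract fibre $F_\psi\subset\overline{M}_{0,8}$ is smooth, but that is perfectly compatible with $D$ being an irreducible \emph{nodal} $(2,2)$ curve: then $F_\psi$ would be its normalization, a smooth rational curve, and the conclusion ``elliptic'' would fail. Since the birational map only preserves geometric genus, the computation $p_a(D)=(2-1)(2-1)=1$ transfers to $F_\psi$ only after one proves $D$ is smooth (and irreducible) at those degenerate points; your leading-coefficient check controls the bidegree, not smoothness. (As a minor point, the coefficient of $t^2$ is $\lambda_3(\lambda_1-1)R(s)$ with $R$ the linear denominator of $N$, a polynomial in $s$ rather than the constant you wrote; this is harmless for the bidegree claim.)

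Compare with how the paper closes exactly this gap. Via $f_1$ the fibre is realized as the complete intersection of two quadric cones $Q_1,Q_2$ inside the $\P^3$ cut out by the two linear projections; the only base points of this family of space curves are $p_7,p_8$, so Bertini-type reasoning confines possible singularities of $C_1$ there, and the explicit tangent computation $\T_{p_j}Q_i\supset\langle v_i,p_j\rangle$ (with $v_i$ the vertex of $Q_i$, $i=1,2$, $j=7,8$) forces $\dim\T_{p_j}C_1=1$. This description also yields connectedness for free, since a complete intersection is connected --- whereas your model only captures the closure of $F_\psi\cap M_{0,8}$, so you would additionally have to exclude components of the fibre lying entirely in the boundary, and your connectedness argument (``a smooth member of $|\O_{\P^1\times\P^1}(2,2)|$ is connected'') presupposes the very smoothness that is missing. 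Finally, the proposed cross-check via Lemma \ref{lem:conti}(iii) is circular as stated: that identity assumes connected fibres and requires the intersection numbers $m_I$, which you have not computed. To repair your route you would need to verify, by direct computation on $E(t,s)$ for general $(\lambda_1,\lambda_2,\lambda_3,\lambda_4)$, that $D$ is smooth at the collision points and irreducible --- the coordinate analogue of the paper's tangent-space check at $p_7,p_8$.
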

\begin{proof}
The morphism $\psi$ is dominant because the sets of indexes satisfy condition $(*)$
of Theorem \ref{surj}. Consider $f_1$, the associated Kapranov
subset $K= \{ p_2, \ldots, p_8 \}$
and the diagram:
\[
\xymatrix{
\overline{M}_{0,8}\ar[d]^{\psi}\ar[r]^{f_1}
&\P^{5} \ar@{.>}[d]^{\pi_1 \times \pi_2  \times \pi_{V^1_{\{1,3,4,7,8 \}} }\times \pi_{V^1_{\{1,5,6,7,8 \}}}} \\
\P^1 \times\P^1\times \P^1 \times \P^1\ar[r]^{=}&
\P^1 \times \P^1 \times\P^1\times \P^1}
\]
where $\pi_1$ and $\pi_2$ are compositions of a Cremona transformation and a linear projection.
After Lemma \ref{lem:fibers} we have that: 
\begin{itemize}
\item[.] fibers of $\pi_1$ are quadric
  cones with vertex $\langle p_2, p_3,
  p_4 \rangle$, and passing through
  $\{p_5,p_6,p_7,p_8\}$; 
\item[.] fibers of $\pi_2$ are quadric
cones with vertex $\langle p_2, p_5, p_6
\rangle$, and passing through
  $\{p_3,p_4,p_7,p_8\}$;
\item[.] fibers of
$\pi_{V^1_{\{3,4,7,8\}}} \times
\pi_{V^1_{\{5,6,7,8 \}}}$ 
are linear spaces of dimension three
containing the line $\langle p_7,
p_8\rangle$ and meeting the disjoint
lines $\langle p_3, p_4 \rangle$ 
and $\langle p_5, p_6 \rangle$.
\end{itemize}
In particular the general fiber $C_1=f_1(F_{\psi})$ is
 a complete intersection of
two quadric cones, say $Q_1$
and $Q_2$, in a $\P^3$, say $H_C$. Its arithmetic genus is $1$ so that if smooth is connected.   Let us check the smoothness
of $C_1$.  The quadric $Q_1$ is singular
at the point $v_1:=C_1\cap \langle
p_3,p_4\rangle$, while $Q_2$ is 
singular at $v_2:=C_1\cap\langle p_5, p_6
\rangle$.  Then the only possible
singularities of $C_1$ are at the points
$\{p_7,p_8\}$. 
To conclude observe that by construction, for $i=1,2$ and $j=7,8$, we have
$$\T_{p_j}Q_i\supset\langle v_i,p_j\rangle$$
hence
$$\dim\T_{p_j}C_1=1.$$
\end{proof}
The map in
the example can be factored as follows,
$$ \psi =((\phi_{\{3,4\}} \times
\phi_{\{5,6\}}) \times (\phi_{\{3,4\}}
\times \phi_{\{5,6\}})) \circ
(\phi_{\{1,2\}} \times \phi_{\{7,8\}})$$ 
where the first map is an immersion into a divisor which is the pullback of a quadric in $\P^3 \times \P^3$
and the second one is the product of two
fibrations whose fibers are pullbacks of
lines. This description  shows  that $\psi$ has 
equidimensional fibers. The morphism $\psi$ can be
realized, following \cite{CT}, in terms
of limit linear series 
on hypergraph curves.
In this dictionary Example \ref{ex:elliptic}
corresponds to a component of the locus
of limit pencils of degree $4$  on a  
stable hypergraph curve of arithmetic
genus $5$ obtained from $4$ smooth
rational curves by gluing at
complementary  indexes of the collection
$I_1,I_2,I_3,I_4$. We do not know how to
generalize this point of view  to
stable hypergraph curves of higher genus
with a one-dimensional 
component of special linear
series. On the other hand the extension
is at hand for  the description in
Example \ref{ex:elliptic}. 

\begin{example} \label{ex2}
Let $K= \{p_2, \ldots, p_{10}
\}\subset\P^{7}$ be a Kapranov
set. Consider the map 
$\psi'=\psi\circ
\phi_{\{9,10\}}$. The general fiber is a
cone over a general fiber of $\psi$, with vertex $\langle
p_9,p_{10}\rangle$. To produce a
fibration by curves we then slice twice
as follows. Fix a forgetful map
centered on a codimension
$2$ vital cycle transverse to $\langle
p_9, p_{10} \rangle $, for instance
$\phi_{\{3,4,5,6,7,8 \}} $  
and a pencil of quadrics with vertex a
Kapranov subset of dimension $4$, for
instance $\phi_{\{1,2,5,6,7,8\}}$. This
induces a forgetful map
$$\psi_{10}:\overline{M}_{0,10}\to
\prod_1^6\P^1$$
whose general fiber is a  genus $5$
smooth canonical curve, obtained as
complete intersections of three quadrics
and three hyperplanes. 
The morphism described is then given by
the following collection of forgotten sets:
\begin{eqnarray*}\{1,2,3,4,9,10\},
  \{1,2,5,6,9,10 \}, \{3,4,7,8,9,10 \},\\
  \{5,6,7,8,9,10 \}, \{1,2,5,6,7,8\}
  , \{3,4,5,6,7,8 \}
\end{eqnarray*}
Observe that every index is repeated at least
three times and condition $(*)$ is satisfied. 

One can go further to produce fibrations
on $\overline{M}_{0,2k}$, for each $k
\geq 4$, whose general fibers are smooth complete intersections of $k-2$
quadrics. 
Let us briefly sketch this construction.
Start with the above example for $k=4$ and
suppose we have constructed such a fibration on $\overline{M}_{0,2k}$, with
sets of indexes $I_1, \ldots, I_{2k-4}$ of cardinality $2k-4$ each, we add to each $I_j$ the new indexes $2k+1, 2k+2$ and we form
$I_{2k-3}, I_{2k-2}$ by $I_{2k-3}= \{ 1,2, 5,6, \ldots,2k-1,2k \}$,$I_{2k-2}= \{ 3,4, 5,6, \ldots,2k-1,2k \}$.  
Condition (*) is verified for this set of indexes so that the forgetful map 
$$\psi_{2k+2}:
\overline{M}_{0,2k+2} \to \prod \P^{1}$$
is surjective. The general fiber, via $f_1$, is a
complete intersection of hyperplanes and
$(k-3)$ hyperquadrics.  It is easy to recover
smoothness and connectedness of fibers
as before.
Therefore the  genus of a general fiber is 
$$g(F_{\psi_{2k+2}})=(k-4)2^{k-3}+1$$
\end{example}

\section{Fibrations with low-dimensional target}
\label{sec:tar}

As a final application we want to give
some evidence to the following conjecture.
\begin{conjecture}
\label{conj:fac} Every fibration from 
$\Mo{n}$ factors
through a forgetful map. 
\end{conjecture}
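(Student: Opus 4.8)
The plan is to attack the conjecture through the projective dictionary of Kapranov's model used throughout the paper, organised as a double induction on $n$ and on the relative dimension $r=\dim F_f=n-3-\dim X$. Fix a very ample $\A\in Pic(X)$, set $\L=f^*(\A)$, and push it forward through a Kapranov map $f_1:\Mo{n}\to\P^{n-3}$ to obtain the $M_\K$-linear system $\L_1=f_{1*}(\L)$. Writing $C_1=f_1(F_f)$ for the image of a general fibre, the goal is to recognise $C_1$, together with all of its Cremona transforms $C_j=\omega_j^\K(C_1)$, as a general fibre of a product of linear projections from vital spaces, since by the discussion after Definition \ref{def:cre} such projections are exactly the projective shadows of forgetful maps. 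If $C_1$ is linear then $f$ is a linear fibration and Theorem \ref{th:linear} already yields a factorisation $f=g\circ\psi$ through a forgetful morphism; this is the situation to which I would try to reduce.

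If $C_1$ is nonlinear, I would descend to the boundary. The intersection multiplicities $m_I=\L^{n-4}\cdot E_I$ of the general fibre with the vital spaces are constrained by the identities of Lemma \ref{lem:conti}, obtained by restricting Proposition \ref{pic} to $F_f$. Whenever $m_I\neq 0$ while $m_J=0$ for a suitable $J\supset I$, Lemma \ref{lem:restrizione} exhibits a boundary divisor $E_J\cong\Mo{|J|+1}\times\Mo{|J^*|+1}$ on which $f$ restricts to a fibre type morphism of strictly smaller relative dimension. Applying the inductive hypothesis to each factor — both are moduli spaces with fewer markings — I would conclude that $f|_{E_J}$ factors through a forgetful morphism, with Lemma \ref{lem:restnum} relating its numerical characters $m'_I$ to the $m_I$. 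Running this over enough boundary divisors should single out a finite collection of vital spaces $V^1_{I_1},\ldots,V^1_{I_k}$ whose associated projections jointly cut out $C_1$.

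To globalise, I would check that $I_1,\ldots,I_k$ satisfy condition $(*)$, so that by Theorem \ref{surj} the forgetful morphism $\psi=\prod_{j=1}^{k}\phi_{I_j}$ is surjective with $F_\psi=\bigcap_{j} F_{\phi_{I_j}}$ by Lemma \ref{lem:fibers}; condition $(**)$ of Proposition \ref{**}, after factoring out any commonly forgotten indices, would force $\psi$ to be equidimensional with $F_\psi=F_f$. One then repeats the closing argument of Theorem \ref{th:linear} verbatim: $\L$ is base point free and numerically trivial on a general fibre of $\psi$, hence $\psi$-numerically trivial, and since $\psi$ is a forgetful morphism with rational fibres, $\psi$-numerical and $\psi$-linear equivalence coincide; thus $\L=\psi^*(\H)$ for some base point free $\H$, and the induced birational morphism $g$ with $f=g\circ\psi$ finishes the proof.

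The hard part, and the reason this remains a conjecture, lies in the reduction and globalisation steps: one must rule out fibrations whose general fibre is nonlinear and \emph{not} a fibre of any product of vital projections. The examples of Sections \ref{sec:num} and \ref{sec:nonl} — case (iv) of Proposition \ref{prop:curves}, and the elliptic and higher-genus fibres of Examples \ref{ex:elliptic} and \ref{ex2} — show that such nonlinear, even non-rational, fibres genuinely occur, so the clean reduction to Theorem \ref{th:linear} is unavailable in general. What is really needed is to know that the $f$-contracted curves span a face of $\NEbar(\Mo{n})$ corresponding to a forgetful morphism, which is essentially the $F$-conjecture for the nef cone of $\Mo{n}$ and is open for $n\geq 8$. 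For $n\leq 7$ the explicit classification of Proposition \ref{prop:curves} supplies exactly the combinatorial input that makes the argument go through, and an unconditional proof of the full conjecture would presumably require a description of the Mori cone of $\Mo{n}$ of the kind obtained for $n=7$ only recently in \cite{La}.
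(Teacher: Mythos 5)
The statement you were asked to prove is a conjecture: the paper offers no proof of it, only partial evidence (Theorem \ref{th:linear} for linear fibrations, Proposition \ref{prop:curves} and Theorem \ref{th:fac7} for $n\leq 7$ or ${\rm dim}\,X\leq 2$, and the surjectivity criterion of Theorem \ref{surj}). Your proposal is therefore rightly not a proof, and to your credit you say so explicitly in your closing paragraph. What you have written is a faithful reconstruction of the strategy the paper actually deploys for its partial results: pass to Kapranov's model, constrain the intersection numbers $m_I$ via Lemma \ref{lem:conti}, descend to boundary divisors via Lemmata \ref{lem:restrizione} and \ref{lem:restnum}, and conclude with Theorem \ref{th:linear} when the fibre is linear. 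Your diagnosis of the obstruction --- nonlinear and even non-rational fibres, as in Proposition \ref{prop:curves}(iv) and Examples \ref{ex:elliptic} and \ref{ex2}, and the need for Mori-cone information of the kind obtained for $n=7$ in \cite{La} --- matches the paper's own framing of why the statement remains open.

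Two steps in your sketch are weaker than you indicate, beyond the conceded main gap. First, the inductive boundary descent is not as clean as stated: the restriction $f|_{E_J}$ may have non-connected fibres (the paper flags this right after Lemma \ref{lem:restrizione}), and in the proof of Theorem \ref{th:fac7} the authors must argue separately that the fixed component $F$ of the restricted linear system is empty before any induction applies; your sketch silently assumes connectedness and no fixed part. Second, and more seriously, your plan to repeat the closing argument of Theorem \ref{th:linear} ``verbatim'' fails precisely in the regime where you need it: that argument uses that $\psi$ is a forgetful morphism with \emph{rational} fibres to conclude that $\psi$-numerical and $\psi$-linear equivalence coincide. If the candidate $\psi$ has elliptic or higher-genus fibres --- exactly the nonlinear examples of Section \ref{sec:nonl} --- then a numerically trivial line bundle on a fibre need not be trivial (the Picard group of an elliptic curve has a continuous part), so the key implication $\L=\psi^*(\H)$ does not follow by this route, and a genuinely new descent argument would be required even after reducing to a surjective forgetful morphism with $F_\psi=F_f$.
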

This is the best result we are able to
prove in this direction. 

\begin{theorem} Let  $f: \Mo{n} \to X$
  be a dominant morphism with ${\rm
    dim}X < n-3$.
If either $\dim X\leq 2$ or  $n \leq 7$
then  $f=g \circ h$, where $g$ is a generically finite
morphism and $h$ is a forgetful
morphism. More precisely  if  $n \leq 7$ then $h$ is
one of the following  morphisms:
\begin{itemize}
\item[(i)] $\phi_I : \Mo{n} \to \overline{M}_{0,n-|I|}$, with $|I| \leq n-4$,
\item[(ii)] $\phi_I \times \phi_J :\overline{M}_{0,n} \to
\overline{M}_{0,n-|I|} \times \overline{M}_{0,n-|J|}$, where $|I|+|J| \leq 3$ and if $|I \cap J| \geq 1$ $n=7$ and $|I \cap J| =1$
\item[(iii)] $ \phi_{\{i_,i_2,i_3\}} \times \phi_{\{i_4,i_5,i_6\}} \times \phi_{\{i_1,i_4,i_7\}}:
\overline{M}_{0,7} \to \overline{M}_{0,4} \times \overline{M}_{0,4} \times
\overline{M}_{0,4}$.
\end{itemize}\label{th:fac7}
\end{theorem}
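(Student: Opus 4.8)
The plan is to put $f$ into Stein form, dispose of the relative--dimension--one case by the classification already obtained, and reduce the genuinely low target case to a \emph{linear} fibration so that Theorem \ref{th:linear} applies. Concretely, I would first replace $f$ by its Stein factorisation $f=g_0\circ f_0$, with $f_0\colon\Mo{n}\to X_0$ a fibration with connected fibres and $g_0$ finite; since $g_0$ can be absorbed into $g$, it suffices to treat the case in which $f$ is itself a fibration. Write $r=\dim X$, so the general fibre $F_f$ has dimension $n-3-r\ge 1$. For $n\le 7$ I would argue according to the relative dimension $n-3-r$: if $r=n-4$ then $f$ is a fiber type morphism of relative dimension one and Corollary \ref{cor:stein} (through Proposition \ref{prop:curves}) gives directly the factorisation $f=g\circ h$ with $h$ one of the maps listed there, the triple product being case (iii); if instead the relative dimension is at least two, then $r\le n-5\le 2$ because $n\le 7$, so this falls under the low target analysis below. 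Thus for $n\le 7$ the statement follows from Corollary \ref{cor:stein} together with the case $\dim X\le 2$, and the explicit list is read off by tabulating, for each admissible pair $(n,r)$ with $n\le 7$, the forgetful morphisms whose target has dimension $r$ and matching them against (i)--(iii).

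The heart of the matter is therefore the case $\dim X\le 2$ for arbitrary $n$, and the strategy is to prove that $f$ is a linear fibration and then invoke Theorem \ref{th:linear}. I would set up a dichotomy. Fix a Kapranov map $f_i$; for $j\ne i$ let $C_{\phi_j}$ be a general fibre of $\phi_j\colon\Mo{n}\to\Mo{n-1}$, so that by Lemma \ref{lem:fibers} its image $f_i(C_{\phi_j})$ is a line through $p_j$ and the intersection number $\L\cdot C_{\phi_j}$, for $\L=f^{\ast}\A$ with $\A$ very ample, equals the degree of $f_{i\ast}\L$ minus its multiplicity at $p_j$. If this number vanishes for some $j$, the nef class $\L$ contracts every fibre of $\phi_j$; these curves are connected and sweep out $\Mo{n}$, so the rigidity lemma yields $f=\bar f\circ\phi_j$ with $\bar f\colon\Mo{n-1}\to X$ again a fibration with $\dim X\le 2$, and I would induct on $n$. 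If instead no $\phi_j$ is contracted, I would prove that $f$ is linear: the image $Y:=f_i(F_f)\subset\P^{n-3}$ has codimension $r\le 2$, and, $f$ being a fibration, two general fibres $F_f$ are disjoint on $\Mo{n}$, so the images $Y$ meet only along the vital base locus of $f_{i\ast}\L$. A Bertini and degree computation then confines all pairwise intersections of this moving family to a fixed vital cycle and forces, after a suitable standard Cremona transformation $\omega^{\K}_h$ of Definition \ref{def:cre}, the degree of $Y$ to be one; hence $Y$ is a vital linear space and $f$ is linear. Theorem \ref{th:linear} now furnishes sets $I_1,\dots,I_k$ and a birational $g\colon\prod_j\Mo{n-|I_j|}\to X$ with $f=g\circ\prod_j\phi_{I_j}$; since $\dim X=\sum_j(n-|I_j|-3)\le 2$ there are at most two factors, condition $(*)$ of Theorem \ref{surj} securing their surjectivity, and specialising to $n\le 7$ this produces the remaining entries $\phi_I$ and $\phi_I\times\phi_J$ of the list.

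The step I expect to be hardest is the linearity argument in the non--contraction branch: bounding the degree and base locus of the moving family $Y$ and showing that one standard Cremona transformation turns it into a vital linear system, and, when $\dim X=2$, organising the resulting two--parameter family so as to recover both forgetful factors and to verify condition $(*)$. A secondary, essentially bookkeeping, difficulty is to match the output of Corollary \ref{cor:stein} and of the linear reduction with the exact combinatorial shape of cases (i)--(iii) for $n\le 7$, in particular the exceptional configuration with $|I\cap J|=1$ that occurs on $\Mo{7}$.
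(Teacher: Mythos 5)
Your opening reductions (Stein factorisation, splitting off the relative--dimension--one case via Corollary \ref{cor:stein}, quoting \cite{BM} for curve targets) match the paper, but the heart of your argument --- the dichotomy ``either some $\phi_j$ is contracted, or $f$ is a linear fibration'' --- is false, and this is a genuine gap rather than a fixable detail. Consider $\psi=\phi_{\{1,2,3,4\}}\times\phi_{\{5,6,7,8\}}:\Mo{8}\to\overline{M}_{0,4}\times\overline{M}_{0,4}$, a fibration onto a surface: it is surjective by Theorem \ref{surj} (condition $(*)$ holds and $h=3\geq 0$). No fibre of any $\phi_j$ is contracted, because $\phi_I$ is constant on fibres of $\phi_j$ only when $j\in I$, and here $I\cap J=\emptyset$ while $I\cup J=\{1,\ldots,8\}$; so $\psi$ lands in your second branch. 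Yet by Lemma \ref{lem:fibers}, for $i\in I$ the image $f_i(F_{\psi})$ is the intersection of the quadric cone with vertex $\langle p_2,p_3,p_4\rangle$ over a conic through $p_5,\ldots,p_8$ with a general $4$-plane containing $\langle p_5,\ldots,p_8\rangle$, i.e.\ a quadric threefold, and by symmetry the same holds for $i\in J$; since every index lies in $I\cup J$, \emph{no} Kapranov map makes the fibre linear. Two general fibre images do meet only along a vital space (inside $\langle p_5,\ldots,p_8\rangle$), exactly as your Bertini setup predicts, but this does not force degree one after any standard Cremona transformation: these quadrics stay quadrics under every $\omega^{\K}_h$. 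So the reduction to Theorem \ref{th:linear} is unavailable precisely in the regime ($\dim X\leq 2$, arbitrary $n$) your argument is supposed to cover; the paper's remark after Theorem \ref{th:linear} already warns that forgetful maps can fail to be linear from $n\geq 7$ on.

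For comparison, the paper's proof of the $\dim X=2$ case proceeds quite differently: by induction on $n$ with base case $\Mo{6}$ (Proposition \ref{prop:curves}), it first shows the general fibre meets some vital hyperplane $E_{2,3}\iso\Mo{n-1}$ in a general point, restricts $f$ there, applies the inductive classification, and rules out a fixed component of the restricted linear system by varying the vital hyperplane (as in \cite{BM}). This pins down the base locus of $\L_1=f_{1*}(f^*A)$ as either a codimension--$3$ linear space or a union of two codimension--$2$ linear spaces; blowing these up yields a Fano variety $Y$ whose cone of curves $NE(Y)=\langle b_1,b_2,l\rangle$ is computed explicitly, showing $Y$ admits a \emph{unique} fibre-type contraction onto a surface, which must therefore be the known forgetful one. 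Note that this uniqueness-of-contraction argument accommodates nonlinear fibres such as the quadrics above, which is exactly what your linearisation strategy cannot do. Your bookkeeping for $n\leq 7$ is essentially fine (for $n\leq 7$ a surface-target product $\phi_I\times\phi_J$ with $|I|=|J|=n-4$ would need $|I\cap J|=n-8<0$ to cover all indices, so some index is free and those fibrations are indeed linear), but the general low-target step must be replaced by an argument of the paper's kind.
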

\begin{proof}
By Stein factorization we may assume
that $f$ has connected fibers. 
The
theorem has been proved in \cite{BM}, Corollary 3.9 for
fibrations over a curve. Keep in mind
that $\Mo{n}$ is rational and therefore
if $\dim X=1$ then $X\iso\P^1$.
Thanks to Corollary \ref{cor:stein}
the only  remaining case is that of
$\dim X=2$. We will prove it by
induction on $n$. The first step, that
is the case of $\Mo{6}$, is already
proved in Proposition \ref{prop:curves}.
Consider such a fibration, and the usual diagram
\[
\xymatrix{
\overline{M}_{0,n}\ar[d]^{f}\ar[r]^{f_1}
&\P^{n-3} \ar@{.>}[d]^{\pi} \\
X\ar[r]^{=}&X}
\]

Let $G:=f_1(F_f)$ be the image of a general
fiber.
\begin{claim} There is a 
vital hyperplane intersecting $G$ in a
general point.
\end{claim}
\begin{proof}[Proof of the Claim] In any case the 
general fiber has to intersect in a
general point at least one of the
$E_I$. If $|I|=2$ we are done. Assume
that $|I|\geq 3$, $\{2,3\}\in I$ and let
$\Gamma=f_{1*}(F_f\cap E_I)$ be the
intersection curve. Consider the divisor
$E_I\iso \Mo{|I|+1}\times\Mo{|I^*|+1}$ with
natural projections $\pi_I$ and $\pi_{I^*}$
onto the factors. 
We may assume that $\Gamma$ is not contained
in a fiber of $\pi_I$. Hence we have
$$E_{2,3}\cap
\Gamma\neq\emptyset.$$
\end{proof}

Assume that such a vital hyperplane is
$H:=H^{1\vee}_{2,3}$. Then the restriction 
$$\f:=f_{|H}:E_{2,3}\iso\Mo{n-1}\to X$$
is of fiber type and, by induction hypothesis, we
can factor $\f$
via a map, say $\psi$, onto either
\begin{itemize}
\item[.] $\Mo{5}$ or
\item[.] $\Mo{4}\times\Mo{4}$.
\end{itemize}
 Let
$\L_1=f_{1*}(f^*A)$, for some very ample
linear system $A$ on $X$. 
We cannot conclude by induction because
the morphism  $\f$ could have non
connected fibers.  
Let $\L_{1|E_{6,7}}=\M+F$ for some movable
linear system $\M$ and fixed component
$F$. We aim to study the fixed component
$F$ to show that it is empty. 

With the description of $\f$ it
is easy to produce many vital
hyperplanes intersecting the general
fiber of $\f$. Then changing the vital divisor
$E_{2,3}$ and repeating the argument we
can see, as in the proof of Theorem 3.7 in \cite{BM}, that $F$ is empty.

Next we study the base locus $\Bl\L_1$. Again changing
the divisor $H$  we see that $\Bl\L_1$ is either a
codimension 3 linear space or a pair of
codimension two linear spaces. In both cases
let $Y$ be the blow up of $\P^{n-3}$ along
the base locus. 

If $\Bl\L_1$ is irreducible then $Y=\P(\O(1)\oplus\O)$ 
and there is a unique fiber type
morphism onto a surface.
Assume that $\Bl\L_1=V_1\cup V_2$ has two
irreducible components and $\cod (V_1\cap
V_2)=2$. Let $Y$ be the
blow up of $\P^{n-3}$ along
$\Bl\L_1$.  Let 
$$B_1\iso\P_{\P^{n-6}}(\O(1)\oplus\O)\times\P^1\ {\rm
  and}
\ B_2\iso\P_{\P^{n-6}}(\O(1)\oplus\O)\times\P^1$$ be the
exceptional divisors in $Y$ and $b_1$,
 $b_2$ the class of exceptional 
curves (i.e. the fibers of the
projection onto the first factor) in $B_1$, respectively $B_2$. Let
$m$ be the transform of a general line
in $\P^{n-3}$ and $l$ the transform of a
 line intersecting both $V_1$ and
$V_2$.
Then as vector spaces we have
$$N_1(Y)_\R=\langle b_1,b_2,m\rangle $$
and
$$N^1(Y)_\R=\langle B_1,B_2,H\rangle $$
where $H$ is the pullback on $Y$ of a
hyperplane. The numerical class of a
curve $\Gamma$  is therefore uniquely determined
by the triple 
$$(B_1\cdot\Gamma,B_2\cdot\Gamma,H\cdot\Gamma)$$ 
In this notation we have
$$b_1\equiv(-1,0,0),\ b_2\equiv(0,-1,0),\ l\equiv(1,1,1).$$ 
Moreover for any irreducible curve
$Z\subset Y$ we have $B_i\cdot Z\leq
H\cdot Z$. Hence any effective curve is numerically
proportional to a linear combination of
these three curves, that is
$$NE(Y)=\langle b_1,b_2,l\rangle. $$
Note that $Y$ is Fano and therefore 
$NE(Y)=\overline{NE}(Y)$. Hence on $Y$
there is a unique morphism 
of fiber type onto a surface.

The final case, that  is  $\Bl\L_1=V_1\cup V_2$
and $\cod V_1\cap V_2=1$, can be treated
similarly. This time we may assume that 
$$B_1\iso\P^{n-5}\times\P^1$$ 
and $B_2$ is the blow up of
$\P^{n-5}\times\P^1$ along a codimension
2 smooth section
$\Sigma\iso\P^{n-6}$. Then the three
curves 
that generate $\overline{NE}(Y)$ are
$$b_1\equiv(-1,0,0),\ b_2\equiv(1,-1,0),\ l\equiv(1,1,1).$$
Again there is a unique morphism of
fiber type onto a surface.
This is enough to conclude.
\end{proof}

\end{document}